\theoremstyle{plain}
\newtheorem{theorem}{Theorem}[section]
\newtheorem{corollary}[theorem]{Corollary}
\newtheorem{prop}[theorem]{Proposition}
\newtheorem{lemma}[theorem]{Lemma}
\theoremstyle{definition}
\newtheorem{definition}[theorem]{Definition}
\theoremstyle{plain}
\newcommand{\nat}{\mathbb{N}}
\newcommand{\N}{\mathbb{N}}
\newcommand{\prob}{\mathbb{P}}
\newcommand{\p}{\mathbb{P}}
\newcommand{\E}{\mathbb{E}}
\newcommand{\expt}{\mathbb{E}}
\newcommand{\floor}[1]{{\left\lfloor #1 \right\rfloor}}
\newcommand{\ceil}[1]{{\left\lceil #1 \right\rceil}}
\newcommand{\sset}{\subset}
\newcommand{\la}{\lambda}
\newcommand{\al}{\alpha}
\newcommand{\mathforall}{\text{ for all }}
\newcommand{\mathand}{\;\text{and}\;}
\newcommand{\mathfor}{\;\text{for}\;}
\newcommand{\mathor}{\;\text{or}\;}
\newcommand{\mathas}{\;\text{as}\;}
\newcommand{\mathif}{\;\text{if}\;}
\newcommand{\ga}{\gamma}
\newcommand{\ep}{\epsilon}
\newcommand{\ka}{\kappa}
\newcommand{\de}{\delta}
\newcommand{\sig}{\sigma}
\newcommand{\del}{\partial}
\newcommand{\scrL}{\mathcal{L}}
\newcommand{\scrG}{\mathcal{G}}
\newcommand{\scrC}{\mathcal{C}}
\newcommand{\scrR}{\mathcal{R}}
\newcommand{\scrA}{\mathcal{A}}
\newcommand{\scrH}{\mathcal{H}}
\newcommand{\scrB}{\mathcal{B}}
\newcommand{\scrF}{\mathcal{F}}
\newcommand{\card}[1]{\left\vert #1 \right\vert}
\newcommand{\Z}{\mathds{Z}}
\newcommand{\R}{\mathds{R}}
\newcommand{\eqd}{\stackrel{d}{=}}
\newcommand{\X}{\times}
\newcommand{\lf}{\left}
\newcommand{\rg}{\right}
\DeclareMathOperator{\Ai}{Ai}
\DeclareMathOperator{\Var}{Var}
\begin{document}
	
	\begin{frontmatter}
		
		\title{Bulk properties of the Airy line ensemble}
		\runtitle{The Airy line ensemble}
		
		\begin{aug}
			\author{\fnms{Duncan} \snm{Dauvergne}\ead[label=e1]{dd18@math.princeton.edu}}
			\thanks{D.D. was supported by an NSERC CGS D scholarship.}
			\and
			\author{\fnms{B\'alint} \snm{Vir\'ag}\ead[label=e2]{balint@math.toronto.edu}}
			\thanks{B.V. was supported by the Canada Research Chair program, the NSERC Discovery Accelerator grant, the MTA Momentum Random Spectra research group, and the ERC consolidator grant 648017 (Abert).}
			
			\runauthor{Dauvergne and Vir\'ag}
			
			\address[A]{Department of Mathematics, Princeton University \\
				\printead{e1}\\}
			
			\address[B]{Department of Mathematics, University of Toronto \\
				\printead{e2}\\}
		\end{aug}
		
		\begin{abstract}: The Airy line ensemble is a central object in random matrix theory and last passage percolation defined by a determinantal formula. The goal of this paper is to provide a set of tools which allow for precise probabilistic analysis of the Airy line ensemble. The two main theorems are a representation in terms of independent Brownian bridges connecting a fine grid of points, and a modulus of continuity result for all lines. Along the way, we give tail bounds and moduli of continuity for nonintersecting Brownian ensembles, and a quick proof of tightness for Dyson's Brownian motion converging to the Airy line ensemble.
		\end{abstract}
		
		\begin{keyword}[class=MSC]
			\kwd[Primary ]{60K35}
			\kwd[; secondary ]{60B20}
		\end{keyword}
		
		\begin{keyword}
			\kwd{Airy line ensemble}
			\kwd{Airy process}
			\kwd{KPZ universality class}
			\kwd{last passage percolation}
			\kwd{Brownian Gibbs property}
			\kwd{modulus of continuity}
			\kwd{Dyson's Brownian motion}
		\end{keyword}
		
	\end{frontmatter}

	\section{Introduction}
	
	The Airy line ensemble is a central object in random matrix theory, last passage percolation, and more generally, for problems about the Kardar-Parisi-Zhang universality class. It was first described by Pr\"ahofer and Spohn~\citep{prahofer2002scale} as the scaling limit of the polynuclear growth model, see Section~\ref{s:related} for more details on its history and related work.
	
	\subsection*{A brief description} The \textbf{parabolic Airy line ensemble} $\scrL$ is a decreasing sequence of nonintersecting continuous functions $\scrL_1 > \scrL_2 > \scrL_3 \dots$  where each $\scrL_i:\R \to \R$. It is the unique process of nonintersecting continuous functions whose finite dimensional distributions form a determinantal process with kernel~\eqref{E:airy-kernel}.
	
	
	We use the term parabolic in front of Airy line ensemble to help distinguish the object from $\scrL(t) + t^2$, which is known as the \textbf{Airy line ensemble}. The process $\scrL(t) + t^2$ is stationary. Also, for any fixed $t$, the distribution of $\scrL_1(t) + t^2$ is a GUE Tracy-Widom random variable (see~\cite{prahofer2002scale}) and
	\begin{equation}
	\label{E:asymptotics}
	\scrL_k(t) + t^2 \sim -(3\pi k/2)^{2/3} \qquad \text{ as } k\to \infty,
	\end{equation}
	see Lemma~\ref{L:var-airy} and Corollary~\ref{C:point-locations} for more precise asymptotics.
	The determinantal formula~\eqref{E:airy-kernel} is useful for proving convergence to $\scrL$ and for proving some properties of fixed-time distributions. However, it is hard to deduce even the most basic path properties, such as continuity, from it directly: see~\citep{prahofer2002scale}, Appendix A for the essential steps of a proof of continuity using just the determinantal formula.

	
	A useful technique, called the Brownian Gibbs property, was developed by Corwin and Hammond~\citep{CH}. The Brownian Gibbs property says that inside any region, conditionally on the outside of the region, the parabolic Airy line ensemble is just a sequence of independent Brownian bridges of variance $2$ conditioned so that everything remains nonintersecting and continuous.
	
	
	The Brownian Gibbs property implies that if the boundary of a region is well understood, then one can use properties of Brownian bridges to deduce path properties of $\scrL$. This is a big if: for a rectangular region, left and right boundary points can be jammed close together, making nonintersection difficult. Also, the bottom and top boundaries are paths, whose properties are not easily accessible from the determinantal structure. These problems are particularly difficult to tackle further into the large-$k$ bulk of the Airy line ensemble, where lines are more closely spaced and the top boundary cannot be easily removed.
	The goal of this paper is to tackle these issues in order to make the Airy line ensemble more amenable to probabilistic analysis.
	
	\subsection*{Two basic tools}
	For any random object, the most fundamental tool for studying it is a construction with a rich structure of independence. In this paper we obtain such a construction -- the bridge representation -- that quantitatively relates the parabolic Airy lines to independent Brownian bridges. We also obtain tight control of the parabolic Airy line locations and exponential moment bounds for the number of lines that are close together at a given time, which makes the bridge representation useful in practice.
	
	
	A second basic tool, which for the case of Brownian motion is often the first theorem in an introductory textbook, is a good modulus of continuity bound. By using the bridge representation, we obtain modulus of continuity bounds for parabolic Airy lines that are optimal up to a logarithmic factor in the number of lines.
	
	
	The bridge representation and the strong modulus of continuity for parabolic Airy lines are crucial steps in the construction of the scaling limit of Brownian last passage percolation, known as the directed landscape~\citep{DOV}. In that paper, having a detailed and quantitative understanding of the parabolic Airy line ensemble in the bulk is essential for estimating a certain last passage value across $\scrL$. The upcoming work~\cite{DNV} also relies on the bridge representation to show that other models in the KPZ universality class converge to the directed landscape.
	
	\subsection*{The bridge representation} The Brownian Gibbs property suggests that one could construct the top $k$ lines of $\scrL$ by sampling points on a fine space-time grid, then connecting them with independent Brownian bridges that will not intersect because of the fineness of the grid.
	Indeed, we have such a result, with one difference: when a group of endpoints are close together, we have to condition the Brownian bridges between those endpoints not to intersect. However, we have good control over the size of these groups of close endpoints. In particular, they will remain bounded as we include more and more lines in the scales that we are working with. The close endpoint phenomenon is not a deficiency in our method; close endpoints really do exist in the parabolic Airy line ensemble at these scales.
	
	
	To make this more precise, pick parameters $\ell, k \in \N$ and $t, \de > 0$. Let $s_j=tj/\ell$, and sample $\scrL$ at grid points $\scrL_i(s_j)$ for $i \in\{1,\dots, 2k\}$ and $j \in \{0,\dots,\ell\}$. The bridges connecting these points will be indexed by
	$$
	(i, j) \in S := \{1,\dots, 2k\} \X \{1,\dots,\ell\}.
	$$
	Let $G$ be the random graph on $S$ that connects $(i, j)$ to $(i+1, j)$
	if
	$$
	\min(|\scrL_i(s_{j-1}) - \scrL_{i+1}(s_{j-1})|, |\scrL_i(s_{j}) - \scrL_{i+1}(s_{j})|) \le \de.
	$$
	Now for each $i$, connect up the points $\{\scrL_i(s_j)\}$ with independent, variance $2$, Brownian bridges $B_{i,j}:[s_{j-1},s_j]\to\mathbb R$, where $B_{i, j}$ and $B_{i + 1, j}$ are conditioned not to intersect whenever $(i, j)$ and $(i+1, j)$ are in the same component of $G$. This yields a new line ensemble $\mathcal B$, see Figure~\ref{fig:Bridge} for an illustration.
	
	
	
	
	We call $\scrB$ a bridge representation for $\scrL$. For the process $\scrB$ to mimic $\scrL$, it must be nonintersecting with high probability. For this, by Brownian scaling, we need that $\de \gg  \sqrt{t/\ell}$ or else the Brownian bridges coming from different components of $G$ will intersect with nonnegligible probability.
	
	
	Also, regardless of the parameter choices we should not expect that $\scrB|_{\{1, \dots, 2k\} \X [0, t]}$ and $\scrL|_{\{1, \dots, 2k\} \X [0, t]}$ are close in law since $\scrB$ ignores the lower boundary $\scrL_{2k + 1}$ entirely. However, if we have chosen the grid finely enough so that edges in $G$ are sparse (i.e. most bridges $B_{i, j}$ are unconditioned Brownian bridges), then the effect of this lower boundary should be localized to the lowest lines. In particular, it is reasonable to expect that in the right parameter range, the top $k$ lines of $\scrB$ and $\scrL$ are close in law.
	
	
	The distance between the $k$th and $(k+1)$st Airy points is of order $k^{-1/3}$ (see Equation~\eqref{E:asymptotics}), so for edges in $G$ to be sparse we need $\de = o(k^{-1/3})$. Therefore by the rationale discussed above, the mesh size $t/\ell$ should be $o(k^{-2/3})$. For this reason, we use a parameter $\gamma > 0$ so that the mesh size $t/\ell$ is at most $k^{-2/3-\gamma}$.
	
	\begin{theorem}\label{T:bridge-rep-i}
		Let $\ga \in [c_2 \log(\log k)/\log k, 2]$, and let
		$$k \ge 3,\qquad t > 0, \qquad \de = k^{-1/3 - \ga/4}, \qquad \ell \ge tk^{2/3 + \ga}.$$
		The total variation distance between $\scrB$ and $\scrL$ restricted to the top $k$ lines and the interval $[0,t]$ satisfies
		$$
		d_{TV}(\scrB|_{\{1, \dots, k\}\X[0, t]}, \scrL|_{\{1, \dots, k\}\X[0, t]})\le \ell \exp(-c_1 \ga k^{\ga/12}).
		$$
		Here $c_1$ and $c_2$ are positive universal constants.
	\end{theorem}
	
	The possibility of extreme events involving the Airy points or the Brownian bridges is what forces us to take $k^\ga$ and $\de/\sqrt{t/\ell}$ to be at least a power of $\log k$ in Theorem~\ref{T:bridge-rep-i}. We have not attempted to optimize the particular power.
	Note that the use of $2k$ lines in the bridge representation is chosen for convenience; a smaller value could be used without affecting the statement of Theorem~\ref{T:bridge-rep-i}.
	
	
	In practical applications, one has to choose the value $\gamma$ judiciously. A small value gives more conflicts and larger groups of multiple paths, while a large value means more grid points need to be controlled.
	
	
	\begin{figure}%
		\centering
		\begin{subfigure}[t]{5cm}
			\includegraphics[width=5cm]{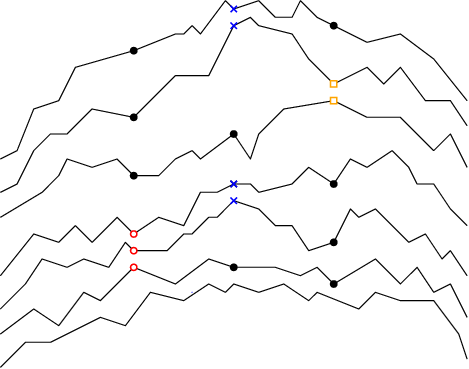}
			\caption{}
		\end{subfigure}
		\qquad
		\begin{subfigure}[t]{5cm}
			\includegraphics[width=5cm]{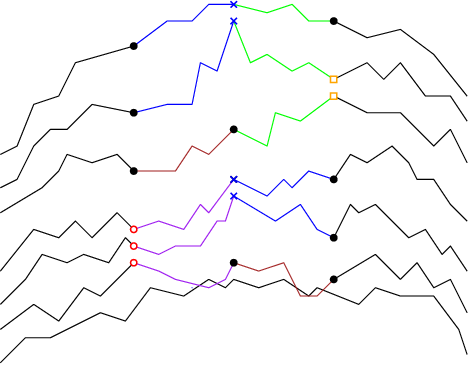}
			\caption{}
		\end{subfigure}
		\caption{An illustration of the bridge representation $\scrB$ of the parabolic Airy line ensemble. Figure~\ref{fig:Bridge}(a) is the process $\scrL$, with points at three times identified. Points with the same time coordinate are grouped together if they are close (i.e. have the same colour/shape). To sample the bridge representation on this grid, we erase all lines between the specified points and resample independent Brownian bridges that are conditioned not to intersect each other if either of their endpoints are close. The result is Figure~\ref{fig:Bridge}(b). Because we ignored the lower boundary condition, there is a reasonable chance that the lowest bridges will intersect this boundary. However, if we only look at the top half of the lines in the bridge representation, then with high probability they do not intersect each other and resemble what we would have obtained from the usual Brownian Gibbs resampling.}%
		\label{fig:Bridge}%
	\end{figure}
	
	We have good control over the behavior of the graph $G$ used to construct the bridge representation. In particular, the probability that a typical vertex is not isolated is $O(k^{-3\ga/4})$ and edges in $G$ are typically well spaced (see Proposition~\ref{P:edge-spread} for a precise statement). Also, the maximal component size $M_k$ of the graph $G$ behaves well; our general bound in the most common case of fixed $\ga$ gives the following.
	\begin{prop}
		\label{P:delta-chains-i}
		Fix $\gamma>0$, $t>0$ and let $\ell=\ell_k = \ceil{tk^{2/3 + \ga}}$ and $\de = \de_k = k^{-1/3 - \ga/4}$. Then
		$$
		\prob (M_k \ge 14(1+1/\gamma)) \to 0 \qquad \mathas k \to \infty.
		$$
	\end{prop}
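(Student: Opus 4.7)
Let $m := 14(1+1/\gamma)$; the plan is a first-moment union bound. Since every edge of $G$ is horizontal, a component of size $\ge m$ consists of $m$ consecutive ranks joined in a single column by $m-1$ edges, so
\[
\{M_k \ge m\} \;\subseteq\; \bigcup_{i=1}^{2k-m+1}\,\bigcup_{j=1}^{\ell} E_{i,j},
\]
where $E_{i,j}$ is the event that the edges $(r, r+1)$ are present in column $j$ of $G$ for every $r \in \{i,\ldots,i+m-2\}$. The outer union has at most $2k\ell = O(k^{5/3+\gamma})$ terms. Each edge in $E_{i,j}$ is witnessed by $\delta$-closeness of $\mathcal{A}_r, \mathcal{A}_{r+1}$ at one of the two endpoint times $s_{j-1}, s_j$, so pigeonhole combined with a factor-$2$ union bound gives $E_{i,j} \subseteq \{N_{i,s_{j-1}} \ge K\} \cup \{N_{i,s_j} \ge K\}$, where $K := \lceil (m-1)/2\rceil$ and
\[
N_{i,s} \;:=\; \bigl|\{\, r \in \{i,\ldots,i+m-2\} : |\mathcal{A}_r(s) - \mathcal{A}_{r+1}(s)| < \delta\,\}\bigr|.
\]

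The key analytic step is to bound $\mathbb{P}(N_{i,s} \ge K)$ uniformly in $s \in [0,t]$ and $i \le 2k$. The single-pair estimate is
\[
\mathbb{P}\bigl(|\mathcal{A}_r(s) - \mathcal{A}_{r+1}(s)| < \delta\bigr) \;\le\; C k^{-3\gamma/4} \qquad (r \le 2k),
\]
equivalent to the $O(k^{-3\gamma/4})$ ``typical vertex not isolated'' bound quoted before the statement of the proposition; it follows from the cubic vanishing $\rho_2(x,y) \asymp \rho(x)^4 (x-y)^2$ of the Airy $2$-point correlation on the diagonal, together with the density bound $\rho \lesssim k^{1/3}$ for the top $2k$ lines. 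I would promote this to the $K$-fold bound
\[
\mathbb{P}(N_{i,s} \ge K) \;\le\; \binom{m-1}{K}(C' k^{-3\gamma/4})^K \;\le\; (4C')^{m}\, k^{-3\gamma K/4},
\]
either by invoking the edge-spread structure captured in Proposition~\ref{P:edge-spread}, which already packages the required decorrelation of distinct close-pair events, or by expanding the joint probability as an integral of the $2K$-point Airy correlation function and applying Hadamard together with the cubic vanishing of the Airy kernel near the diagonal to iterate the single-pair estimate.

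Combining the two steps,
\[
\mathbb{P}(M_k \ge m) \;\le\; 2 \cdot 2k\ell \cdot (4C')^{m}\, k^{-3\gamma K/4} \;=\; O\!\bigl(k^{5/3 + \gamma - 3\gamma K/4}\bigr),
\]
with an implicit constant depending on $\gamma$ and $t$. For $m = 14(1+1/\gamma)$ one has $K \ge 6 + 7/\gamma$, so $3\gamma K/4 \ge 21/4 + 9\gamma/2$ and the exponent is at most $5/3 + \gamma - 21/4 - 9\gamma/2 = -43/12 - 7\gamma/2 < 0$ for every $\gamma > 0$, giving $\mathbb{P}(M_k \ge m) \to 0$. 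The main obstacle is the $K$-fold close-pair estimate: the single pair-close events are not monotone in the underlying Airy configuration, so negative association of determinantal processes does not apply out of the box. Both routes above (the edge-spread result and the direct correlation-function expansion) require care when pair indices are consecutive and the close-pair events overlap in ranks, which is exactly where the cubic vanishing of $\det K$ must be used repeatedly rather than just once.
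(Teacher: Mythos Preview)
Your strategy---union bound over columns and starting ranks, then pigeonhole to a single time, then a factorial-moment bound on close pairs via the determinantal structure---is exactly the route the paper takes (this is Proposition~\ref{P:delta-chains}, proved through Proposition~\ref{P:de-jam-gen} and Corollary~\ref{C:de-jam-airy}). But the exponent you claim in the key analytic step is too strong, and neither of the two routes you offer actually delivers it.

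The bound $\mathbb{P}(N_{i,s}\ge K)\le \binom{m-1}{K}(C'k^{-3\gamma/4})^K$ would require that any fixed collection of $K$ adjacent close-gap events has probability $\le (C'k^{-3\gamma/4})^K$, even when the gaps overlap in ranks. You flag this as the obstacle, but do not resolve it. The Hadamard argument you allude to is Proposition~\ref{P:de-jam-gen}: it bounds the $n$th factorial moment of the number of \emph{disjoint} $\delta$-close pairs, and to feed into it one must first extract disjoint pairs from the jammed configuration. The paper does this by a greedy matching (Lemma~\ref{L:k-tuples}), which from $L$ jammed points produces only $\lfloor L/3\rfloor$ disjoint pairs. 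So from your $K\approx m/2$ close gaps one gets a decay exponent of $3\gamma\lfloor m/6\rfloor/4$, not $3\gamma K/4$. Your appeal to Proposition~\ref{P:edge-spread} does not help either: that statement is about windows of size $k^\alpha$ with $\alpha>0$ and deviations bounded by $k^{2\alpha/3}$, which is the wrong regime for a window of constant size $m$.

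There is also a step you skip: correlation-function bounds are spatial, so before invoking them you must localize the ranks $\{i,\ldots,i+m-1\}$ to a spatial interval. The paper does this with Corollary~\ref{C:point-locations}, working on a high-probability event where $|\scrA_r(s)+\kappa r^{2/3}|$ is controlled; this costs logarithmic factors in the interval length but not in the exponent. With the honest exponent $3\gamma\lfloor m/6\rfloor/4$ and this localization, the computation becomes
\[
5/3+\gamma-\tfrac{3\gamma}{4}\lfloor m/6\rfloor \;\le\; 5/3+\gamma-\tfrac{3\gamma}{4}\bigl(\tfrac{4}{3}+\tfrac{7}{3\gamma}\bigr)=5/3-7/4<0,
\]
so the specific choice $m=14(1+1/\gamma)$ still works---but only just, and not with the comfortable margin your calculation suggests.
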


	One of the main technical difficulties in proving Theorem~\ref{T:bridge-rep-i} is understanding the distribution of pairs of points that are within $\de$ of each other in the Airy point process $\{\scrL_k(0) : k \in \N\}$. In an interval of the form $[-a,-a+q]$, where $a\ge 1$ and $q$ is either smaller or of comparable size to $a$, the number $L$ of such pairs is typically of the order $\eta =a^2\de^3q$. By analyzing the determinantal structure of the Airy point process we show that the probability of being larger than this typical value decays exponentially. We state the following bound for all $a \ge 1$ and $q \ge 0$. Note that it is far from optimal when $q \gg a$.
	
	\begin{prop}\label{P:mean-conc-i} There exists a universal constant $c>0$ such that for all $a \ge 1$ and $q \ge 0$, we have
		$$
		\prob (L > m \eta) \le 2e^{-c m\lf(\frac{\eta}{\eta + 1}\rg)} \qquad \text{ for all }  m>0.
		$$
	\end{prop}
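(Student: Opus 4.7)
My plan is to exploit the determinantal structure of the Airy point process through two standard facts: (a) for any interval $I$, the count $N(I)$ is a sum of independent Bernoullis whose parameters are the eigenvalues of the Airy kernel restricted to $I$, hence $N(I)$ is sub-Poisson; and (b) counts on disjoint intervals are negatively associated.

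I would tile $[-a,-a+\ell]$ into intervals $I_j$ of length $\delta$, write $N_j = N(I_j)$, and set $M_j = N_j + N_{j+1}$. Every close pair lies in $I_j \cup I_{j+1}$ for some $j$, giving $L \le \sum_j \binom{M_j}{2}$. Splitting $j$ into even and odd parities yields two sums, each over pairwise disjoint pairs of intervals, so that negative association applies. The mean of each summand is estimated via the two-point function:
$$\mathbb{E}\lf[\binom{M_j}{2}\rg] = \tfrac12\int\!\!\int_{(I_j\cup I_{j+1})^2} [K(x,x)K(y,y)-K(x,y)^2]\, dx\, dy.$$
Using $K(x,x) \lesssim \sqrt{a}$ in the bulk and the local sine-kernel expansion $K(x,x)K(y,y)-K(x,y)^2 \lesssim a^2(x-y)^2$, this is of order $a^2\delta^4$. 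Summing over the $\ell/\delta$ tiles recovers $\mathbb{E}[L] \lesssim \eta$.

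For the tail, I would truncate at a level $K$ and apply exponential Markov:
$$\prob(L > m\eta) \le \prob(\max_j M_j > K) + e^{-sm\eta}\, \mathbb{E}\lf[\exp\lf(s\sum_j \binom{M_j\wedge K}{2}\rg)\rg].$$
The first summand is controlled by a union bound over the $\ell/\delta$ tiles together with the sub-Poisson tail $\prob(M_j > K) \le (eC\sqrt{a}\delta/K)^K$ from (a). For the second, negative association applied separately to even and odd parities, combined via Cauchy--Schwarz, factorizes the exponential moment into single-tile contributions. Using the Bernoulli-sum representation and $\binom{M_j\wedge K}{2} \le K^2$, each factor is at most $1 + Cs\mathbb{E}[\binom{M_j}{2}] \le \exp(Cs a^2\delta^4)$ when $sK^2$ is bounded by a constant. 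Multiplying over tiles yields $\mathbb{E}[\exp(sL)] \le \exp(Cs\eta)$ up to the truncation contribution. Optimizing $s$ and $K$ in the two regimes $\eta \le 1$ (take $s = \Theta(1)$ and $K$ constant, giving rate $m\eta$) and $\eta > 1$ (take $s = \Theta(1/\eta)$ and $K$ correspondingly larger, giving rate $m$) produces the stated interpolating bound $\exp(-cm\eta/(\eta+1))$.

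The main obstacle is the two-point estimate: a naive sub-Poisson bound on $\binom{M_j}{2}$ would give $\mathbb{E}[\binom{M_j}{2}] \lesssim \mu_j^2 \sim a\delta^2$, overestimating by a factor $(\sqrt{a}\delta)^{-2}$ and yielding $\mathbb{E}[L] \sim a\delta\ell$ rather than the correct $\eta = a^2\delta^3\ell$ in the regime $\sqrt{a}\delta \ll 1$. Recovering the right scale requires using the Airy two-point correlation function directly, not just the marginal sub-Poisson structure; this subtlety must also be propagated into the Chernoff step to avoid losing the correct $\eta$-scaling in the rate.
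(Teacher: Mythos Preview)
Your plan correctly identifies the scale $\eta=a^2\delta^3\ell$ via the two-point function, and the tiling plus negative-association framework is a reasonable strategy. However, the Chernoff/truncation step does not close. The term $\prob(\max_j M_j > K)$ carries no dependence on $m$, so it cannot be absorbed into a bound of the form $2e^{-cm\eta/(\eta+1)}$ valid for all $m$. Concretely: in the regime $\eta\le 1$ you take $s=\Theta(1)$, which forces $K=O(1)$ by the constraint $sK^2=O(1)$; but then the union bound gives $\prob(\max_j M_j>K)\lesssim (\ell/\delta)(C\sqrt{a}\,\delta)^K$, and with $K$ fixed this is of order $\eta$ (take $K=4$, say), not $e^{-cm\eta}$. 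Letting $K$ grow with $m$ breaks the single-tile bound $\mathbb{E}[e^{s\binom{M_j\wedge K}{2}}]\le 1+Cs\,\mathbb{E}\binom{M_j}{2}$, since that inequality needs $sK^2$ bounded. The underlying reason is that controlling $\mathbb{E}[e^{s\binom{M_j}{2}}]$ at the right level of $s$ requires information about $\prob(M_j\ge n)$ for all $n\ge 2$ at the repulsion scale, not just the sub-Poisson scale $\mu_j^n/n!$; the two-point function alone is not enough, and you have effectively pushed the missing higher-order repulsion into the truncation error.

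The paper proceeds differently. Rather than tiling and using negative association, it bounds the factorial moments of $L$ globally: it shows $\mathbb{E}\binom{\lfloor L/3\rfloor}{n}n!\le (Cn\eta)^n$ for every $n$, by writing this as an integral of the $2n$-point determinant over $n$ close pairs and then performing row/column operations (conjugating each $2\times 2$ block by a matrix that replaces values by difference quotients). This converts the near-diagonal cancellation $K(x,x)K(y,y)-K(x,y)^2=O(a^2(x-y)^2)$ into a uniform entrywise bound on the transformed matrix, after which Hadamard's inequality gives the estimate. From $\mathbb{E}\binom{\lfloor L/3\rfloor}{n}n!\le (Cn\eta)^n$ one gets $\mathbb{E} e^{bL}\le 2$ for $b=d/(\eta+1)$ directly, and Markov finishes. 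The key point is that the determinantal repulsion is used at \emph{every} order $n$, not just $n=2$; this is exactly what your truncation was standing in for, and it cannot be replaced by the sub-Poisson marginal structure.
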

	
	Similar results hold for pairs of close points in all determinantal processes with nice enough kernels, see Proposition~\ref{P:de-jam-gen}.

	\subsection*{Modulus of continuity and a law of large numbers}
	
	The bridge representation yields a modulus of continuity bound for $\scrL$ that is optimal up to the power of $\log k$.
	
	\begin{theorem}\label{T:mod-cont-i}
		There is a constant $d > 0$ so that for all $t>0$ there is a random $C>0$ so that a.s.
		$$
		\frac{|\scrL_k(s+r) - \scrL_k(s)|}{\sqrt{r\log(1+1/r)} \log^d k} < C
		$$
		for all $k\in \N$, $r>0$, $s,s+r \in[0,t]$.
	\end{theorem}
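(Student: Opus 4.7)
The plan is to derive the modulus of $\scrA_k$ from the bridge representation of Theorem \ref{T:bridge-rep-i}, on which classical Brownian estimates apply. Fix a small $\gamma \in (0, 2]$, and for each $n \in \N$ apply Theorem \ref{T:bridge-rep-i} with $k$ replaced by $K_n = 2^n$, grid spacing $\delta_n = K_n^{-1/3 - \gamma/4}$, and $\ell_n = \lceil t K_n^{2/3 + \gamma}\rceil$. The resulting total variation bound $\ell_n \exp(-c_1 \gamma K_n^{\gamma/12})$ is summable in $n$, so Borel--Cantelli gives an almost sure event and a coupling on which $\scrA|_{\{1,\dots,K_n\}\times[0,t]} = \scrB^{(n)}|_{\{1,\dots,K_n\}\times[0,t]}$ for all $n$ sufficiently large. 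For any $k \in \N$, choose $n = \lceil \log_2 k\rceil$, so that $K_n \ge k$; it then suffices to prove the stated modulus for $\scrB^{(n)}_k$.

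On each grid interval $[s_{j-1}, s_j]$ of length $\tau_n = t/\ell_n \le K_n^{-2/3-\gamma}$, the line $\scrB^{(n)}_k$ is a variance-$2$ Brownian bridge conditioned on non-intersection with at most $M_{k,j} - 1$ neighbours in its component of $G$. A uniform-in-$j$ extension of Proposition \ref{P:delta-chains-i}, obtained by combining the exponential tail of Proposition \ref{P:mean-conc-i} with a union bound over the $\ell_n$ columns, gives $\max_j M_{k,j} = O((\log k)^a)$ with high probability for some $a > 0$. Applying the modulus estimates for nonintersecting Brownian ensembles developed earlier in the paper to at most $O((\log k)^a)$ conditioned bridges yields, for $r \le \tau_n$,
$$
\sup_{s,\, s+r \in [s_{j-1},s_j]} |\scrB^{(n)}_k(s+r)-\scrB^{(n)}_k(s)| \le C_1 (\log k)^{d_1} \sqrt{r \log(1+1/r)}.
$$
For $r > \tau_n$, decompose $[s,s+r]$ at its nearest grid points; the two boundary pieces are handled by the short-scale bound, while the middle piece $\scrB^{(n)}_k(s_{j'}) - \scrB^{(n)}_k(s_j)$ is analysed by a standard dyadic chaining argument between $\tau_n$ and $t$. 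The grid-point increments of $\scrA_k$ over length $r$ are sums of $O(r/\tau_n)$ Brownian-bridge pieces with Gaussian-type tails on scale $\sqrt{r}$, and a union bound over $O(\log \ell_n) = O(\log k)$ dyadic scales produces the L\'evy factor $\sqrt{r\log(1+1/r)}$ with constant $O((\log k)^{d_2})$.

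The main obstacle is ensuring that each step in the argument loses only a polylog factor in $k$, so that the three sources of logarithmic loss combine into the single $\log^d k$ in the statement. These sources are: (i) the modulus for nonintersecting Brownian ensembles within a component of $G$, which must grow only polylog in the component size; (ii) the uniform-in-$j$ bound on $\max_j M_{k,j}$, derived from Proposition \ref{P:mean-conc-i} via a union bound across the $\ell_n$ columns; and (iii) the dyadic chaining across $O(\log k)$ scales, with Gaussian-type increment tails. Each costs only a polylog factor, so the final exponent $d$ in the theorem is finite. Assembling these bounds with the Borel--Cantelli coupling delivers the modulus almost surely for all but finitely many $k$, and the remaining finitely many $k$ are absorbed into the random constant $C$ by almost-sure continuity of $\scrA$.
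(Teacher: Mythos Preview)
Your proposal has a genuine gap in the long-scale (grid-to-grid) step, and a related problem in the choice of $\gamma$.

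The bridge representation fixes the grid values $\scrB^{(n)}_k(s_j)=\scrA_k(s_j)$ as boundary data; the bridges live \emph{between} these points and carry no information about them. So the sentence ``the grid-point increments of $\scrA_k$ over length $r$ are sums of $O(r/\tau_n)$ Brownian-bridge pieces with Gaussian-type tails on scale $\sqrt{r}$'' is not correct: the increment $\scrA_k(s_{j'})-\scrA_k(s_j)$ is determined by the Airy point process at times $s_j,s_{j'}$, not by any bridge. The only available Gaussian-type increment bound, Lemma~\ref{L:airy-tails}, has an $e^{ck}$ prefactor and cannot give a polylog loss. The paper instead controls the piecewise-linear interpolant $L_k$ through grid points using Corollary~\ref{C:point-locations}, which pins every $\scrA_k(s_j)$ to within $O(k^{-1/3}\log k)$ of its mean; this is the missing ingredient in your argument.

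This also explains why the paper does \emph{not} fix $\gamma$. With constant $\gamma>0$ the grid spacing is $\tau_n\asymp k^{-2/3-\gamma}$, while the Airy point fluctuation scale is $k^{-1/3}\log k$. For $\tau_n \le r \le k^{-2/3}$ you are already across several grid cells, so the within-cell bridge modulus does not apply, yet $\sqrt{r}<k^{-1/3}$ is smaller than the Airy point fluctuation, so Corollary~\ref{C:point-locations} gives a bound that is a power of $k$ too large. The paper closes this gap by taking $\gamma=c\log\log k/\log k$, so that $k^{\gamma}=\log^c k$ and $\sqrt{\tau_n}\asymp k^{-1/3}(\log k)^{-c/2}$ matches the point-location scale up to polylog. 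With that choice the decomposition $B_k=L_k+X_k$ (linear interpolant plus bridge fluctuation) handles all scales simultaneously with only polylog loss; no dyadic chaining across grid scales is needed.
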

	
	We have not tried to extract explicit tail bounds on the constant $C$ in Theorem~\ref{T:mod-cont-i}. Along the way to proving the bridge representation, we also show a preliminary modulus on individual lines that does yield good tail bounds. 
	
	\begin{theorem}
		\label{T:airy-mod-i}
		There are $c,d>0$ so that for any $k \in \N$, there exists a random constant $C_k$ so that
		$$
		|\scrL_k(s) - \scrL_k(s + r)| \le C_k \sqrt{r} \log^{1/2}(2/r)
		$$
		for all $r > 0, s, s + r \in [0, 1]$. Moreover,
		$$
		\p(C_k > m) \le e^{ck - d m^2}  \qquad \text{for all }
		m > 0.$$
	\end{theorem}
	
	Up to the constant $d$ and the prefactor $e^{ck}$, the modulus of continuity in Theorem~\ref{T:airy-mod-i} is the same as that of a single Brownian motion without any nonintersection conditions. Theorem~\ref{T:airy-mod-i} immediately implies a statement in the form of Theorem~\ref{T:mod-cont-i} with $\log^d(k)$ replaced by $\sqrt{k}$. Reducing this $k$-dependent factor requires the full power of the bridge representation.
	
	Note that both Theorem~\ref{T:mod-cont-i} and Theorem~\ref{T:airy-mod-i} can be used to give modulus of continuity bounds over other intervals not anchored at $0$ by using stationarity of $\scrL(t) + t^2$.
	
	Many authors have previously studied continuity properties of $\scrL$. Pr\"ahofer and Spohn outlined the essential steps of a proof of continuity in the appendix of~\cite{prahofer2002scale}. Corwin and Hammond~\cite{CH} showed that all parabolic Airy lines are locally absolutely continuous with respect to Brownian motion. Hammond~\citep{hammond2016brownian} found explicit modulus of continuity bounds for more general line ensembles as part of a broader program. In the case of the parabolic Airy line ensemble these bounds grow exponentially in $k$. 
	
	After this paper was first posted, Calvert, Hammond, and Hegde~\citep{calvert2019brownian}, Theorem 3.11, found explicit Radon-Nikodym derivative estimates for Airy lines versus Brownian motion. These estimates can be used to give a version of Theorem~\ref{T:airy-mod-i} where the random constants $C_k$ satisfy $\p(C_k > m) \le e^{c_k -dm^2}$ with the optimal value of $d = 1/4$ and a sequence of exponentially growing constants $c_k$.
	
	
	A good modulus of continuity bound can be naturally combined with pointwise bounds to give a uniform law of large numbers for the parabolic Airy line ensemble.
	
	\begin{theorem}
		\label{T:lln-i}
		For any sequences $\ell_k, t_k>0$ with $\log k = o(\ell_k)$ and $t_k = e^{o(\ell_k)}$, we have
		$$
		\lim_{k \to \infty} \sup_{s \in [-t_k, t_k]}\frac{k^{1/3}|{\mathcal L}_k(s)+s^2+(3  \pi k/2)^{2/3}|}{\ell_k} = 0 \qquad \text{ almost surely.}
		$$
	\end{theorem}
	
	Theorem~\ref{T:lln-i} only requires Theorem~\ref{T:airy-mod-i}, rather than the more precise bound in Theorem~\ref{T:mod-cont-i}.

	\subsection*{Properties of nonintersecting Brownian ensembles} On the way to proving these results, we establish several properties of Brownian bridges conditioned not to intersect. A modulus of continuity bound can be
	directly deduced from the following, see Proposition~\ref{P:bridge-bd}.
	\begin{prop} \label{P:nonint-dev-i}
		Let $B_i$, $i\in\{1,\ldots k\}$ be independent Brownian bridges
		with slope $b_i$ on some time interval $[a,b]$ conditioned not to intersect. Then for any $i$, $t>0$, $s,s+t \in [a,b]$ we
		have
		$$
		\p(|B_i(s+t)-B_i(s)-b_it|\ge m\sqrt{t})\le e^{ck-dm^2}.
		$$
		Here $c, d > 0$ are universal constants.
	\end{prop}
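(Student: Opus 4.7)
The plan is to decompose the target bridge into a centre-of-mass contribution (which is essentially unconditional) and a deviation contribution, and then to combine log-concavity with a monotone coupling to control the deviation.

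A first reduction: subtracting the linear function $b_i(u-a)$ from each $B_i$ kills the drift and leaves a non-intersecting ensemble with new endpoints but slope zero, so we may assume $b_i = 0$ for all $i$ and reduce to bounding $|B_i(s+t) - B_i(s)|$. Now decompose each bridge as $B_j = \bar B + D_j$ with $\bar B = \frac 1k \sum_\ell B_\ell$. Because the non-intersection event depends only on the pairwise differences $B_j - B_{j'} = D_j - D_{j'}$, it is independent of $\bar B$; consequently $\bar B$ retains its unconditional law --- a Brownian bridge of variance $2/k$ with the given average endpoints --- and its increment over $[s,s+t]$ is Gaussian of scale $\sqrt{t/k}\ll \sqrt{kt}$, absorbed into the target. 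It suffices to control $|D_i(s+t)-D_i(s)|$.

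Write $D_i(s+t)-D_i(s)=\mu_i+R_i$, where $\mu_i$ is the (deterministic) conditional mean and $R_i$ is the centred fluctuation. The joint law of $(B_j)$ given non-intersection is a Gaussian measure restricted to the convex cone of ordered path configurations, hence log-concave on path space. The Brascamp-Lieb variance inequality then bounds the conditional variance of any linear functional by its unconditional counterpart, giving $\mathrm{Var}(R_i)=O(t)$ and therefore $\p(|R_i|\ge m\sqrt{kt}/2)\le c\exp(-dkm^2)$. For the mean shift $|\mu_i|$ I would use the monotone coupling of non-intersecting Brownian ensembles, available from the Karlin-McGregor formula or a Weyl-chamber construction: monotonicity in endpoints sandwiches $\mu_i$ between the corresponding quantities for extremal, tightly clustered configurations, in which the conditional mean trajectory $u\mapsto\E[D_i(u)]$ is $O(\sqrt k)$-Lipschitz by classical estimates on the eigenvalue process of a matrix-valued Brownian bridge. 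This gives $|\mu_i|\le C\sqrt{kt}$, and a threshold argument on $m$ (trivial bound below $m=2C$, sub-Gaussian bound above) yields the claim.

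The main obstacle I anticipate is the mean-shift bound. Log-concavity handles the fluctuation $R_i$ cleanly but is silent about the conditional mean, which really can drift by as much as $O(\sqrt k)$ in the tightly clustered regime. Proving the $O(\sqrt k)$ Lipschitz bound for $u\mapsto\E[D_i(u)]$ --- and hence $|\mu_i|=O(\sqrt{kt})$ --- is the technical heart; I would attempt it by resampling the strip $[s,s+t]$ via the Brownian Gibbs property, reducing to an explicit Karlin-McGregor transition-density calculation at the worst-case endpoint configuration, and then transferring to general endpoints via the monotone coupling.
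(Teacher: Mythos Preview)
Your centre-of-mass plus Brascamp--Lieb decomposition handles the fluctuation $R_i$ correctly: the conditioned law is Gaussian restricted to a convex set, hence strongly log-concave with the same curvature, and linear functionals concentrate sub-Gaussianly around their conditional mean with the unconditioned variance proxy $O(t)$. That part is a clean alternative.

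The gap is the mean-shift bound $|\mu_i|\le C\sqrt{kt}$. Monotone coupling controls \emph{pointwise values}, not \emph{increments}: a coupling with $B_i(u)\le C_i(u)$ for all $u$ says nothing about $B_i(s+t)-B_i(s)$ versus $C_i(s+t)-C_i(s)$, so you cannot sandwich $\E[D_i(s+t)-D_i(s)]$ between melon quantities this way. Your fallback of resampling the strip $[s,s+t]$ via Brownian Gibbs is circular, since conditioning on the exterior fixes both $B_i(s)$ and $B_i(s+t)$ and hence the increment itself. (There is also a minor slip in the first reduction: subtracting a different linear function from each $B_j$ destroys the ordered structure, so you may only arrange $b_i=0$ for the single index $i$ under study, not for all $j$.)

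The paper's resolution is to condition first on all values $\{B_j(s)\}_j$. This freezes $B_i(s)$, and on $[s,b]$ the ensemble is again non-intersecting bridges, so monotonicity now legitimately bounds the single value $B_i(s+t)$: push the bridges above $B_i$ to $+\infty$ and those below to the endpoints of $B_i$, dominating $B_i$ on $[s,b]$ by its linear interpolant plus the top line of a Brownian melon. This yields $B_i(s+t)-B_i(s)\le \sqrt{t}(X^++Y^+)$ with $X,Y$ distributed as the top GUE eigenvalue $W_1^{k}(1)$, and the sub-Gaussian tail then comes from the $e^{-d m^2 n^{-1/3}}$ regime of the Ledoux--Rider bound (Theorem~\ref{T:top-bd}). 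Note that once you condition at time $s$ and invoke this comparison, the full distributional bound (mean and fluctuation together) follows in one stroke, so the log-concavity step becomes unnecessary.
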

	The $k$-dependent factor $e^{ck}$ is sharp up to the value of the constant $c$.
	
	
	By combining Proposition~\ref{P:nonint-dev-i} with a stochastic domination argument and one-point bounds for the top lines in a Dyson's Brownian motion $\{W^n_1 > W^n_2 > \dots > W^n_n\}$ (i.e. a collection of $n$ Brownian motions all started at $0$ conditioned not to intersect on $(0, \infty)$), we get a short proof of a tail bound for Dyson's Brownian motion at the edge. This next proposition was originally obtained by Hammond~\cite{hammond2016brownian}, see Theorem 2.14.
	
	\begin{prop}\label{P:Dyson-tails-i}
		Fix $k\in \N,c > 0$. For every $n \in \N$, $t > 0, \;s \in (0, ctn^{-1/3}]$, and $m > 0$ we have
		$$
		\p\bigg(\lf| W_k^n(t +s)- W_k^n(t) - s\sqrt{n/t}\rg| > m \sqrt{s} \bigg) \le c_k e^{-d_k m^{3/2}}.
		$$
		Here $c_k, d_k$ are $k$-dependent constants.
	\end{prop}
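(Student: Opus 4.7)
The plan is to combine the Brownian Gibbs property of Dyson's Brownian motion with Proposition \ref{P:nonint-dev-i} (deviation of non-intersecting Brownian bridges) and Tracy-Widom-type tail bounds for individual edge eigenvalues. The key idea is to resample the top $k$ lines on an interval strictly larger than $[t,t+s]$, so that $t$ and $t+s$ become interior times; then $W^n_k(t+s)-W^n_k(t)$ can be decomposed as a bridge fluctuation around the line connecting the resampling endpoints plus a slope correction that replaces the line's average slope by $\sqrt{n/t}$.

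Concretely, fix an auxiliary length $u$ of order $tn^{-1/3}$ and apply the Brownian Gibbs property of $W^n$ on $I=[t-u,\,t+s+u]$, conditioning on $\{W^n_i(t-u),\,W^n_i(t+s+u):i\le k\}$ and on $W^n_{k+1}|_I$. Let $\mu_0$ denote the law of $k$ non-intersecting Brownian bridges with the prescribed endpoints (and no lower barrier), and let $E$ be the event that these bridges stay above $W^n_{k+1}$ throughout $I$. Brownian Gibbs identifies the law of $W^n|_I$ given the conditioning with $\mu_0(\,\cdot\,|\,E)$. Using that the endpoint gaps $W^n_k(t\pm u)-W^n_{k+1}(t\pm u)$ are of Tracy-Widom scale $\sqrt{t}\,n^{-1/6}$ while free bridges over $I$ fluctuate on scale $\sqrt{u}\asymp \sqrt{t}\,n^{-1/6}$, one shows that with high probability over the conditioning data, $\mu_0(E\,|\,\text{data})\ge 1/2$; so up to a factor of $2$ and a small additive loss, it suffices to control the target event under $\mu_0$.

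Under $\mu_0$, write $\tilde b_k:=(W^n_k(t+s+u)-W^n_k(t-u))/(s+2u)$ for the slope of the $k$-th bridge. Proposition \ref{P:nonint-dev-i} applied to $\mu_0$ yields, for fixed $k$,
$$
\mu_0\bigl(|\tilde W_k(t+s)-\tilde W_k(t)-s\tilde b_k|\ge m\sqrt{s}\bigr)\le c_k\,e^{-c_k' m^2},
$$
with the $\sqrt{k}$ factor in Proposition \ref{P:nonint-dev-i} absorbed into $c_k$. It remains to replace $\tilde b_k$ by $\sqrt{n/t}$. Decomposing
$$
\tilde b_k-\sqrt{n/t}=\Bigl[\frac{2\sqrt{n(t+s+u)}-2\sqrt{n(t-u)}}{s+2u}-\sqrt{n/t}\Bigr]+\frac{\Delta_+-\Delta_-}{s+2u},
$$
where $\Delta_+$ and $\Delta_-$ are the Tracy-Widom recentered values $W^n_k(\tau)-2\sqrt{n\tau}$ at $\tau=t+s+u$ and $\tau=t-u$ respectively, the bracketed deterministic term is $O(\sqrt{n/t}\cdot s/t)$ by Taylor expansion and contributes at most a constant multiple of $\sqrt{s}$ in our parameter range $s\le ctn^{-1/3}$. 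The random term is bounded via the Tracy-Widom upper/lower tail estimates $\p(|\Delta_\pm|>a\sqrt{t}\,n^{-1/6})\le c_k e^{-c_k' a^{3/2}}$, and the weaker $a^{3/2}$ upper Tracy-Widom tail is precisely what dictates the final $e^{-c_k' m^{3/2}}$ rate in the conclusion.

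The hardest step is the simultaneous choice of $u$: it must be small enough that the random lower barrier $W^n_{k+1}$ can be removed without distorting the top $k$ bridge system (which requires $u\ll tn^{-1/3}/k$, so that fluctuations of the lowest free bridge stay within the endpoint gap) yet large enough that the Tracy-Widom endpoint fluctuations yield a slope error of size $\lesssim m/\sqrt{s}$ (which requires $u\gtrsim tn^{-1/3}$). For fixed $k$ these constraints are compatible with $u=c_k\,tn^{-1/3}$ for a sufficiently small $c_k$. A secondary ingredient is a Tracy-Widom upper tail bound of the form $\p(\lambda_k^{\text{GUE}}>2\sqrt n+an^{-1/6})\le c_k e^{-c_k' a^{3/2}}$, uniform in $n$ with constants depending only on $k$, which is a standard input.
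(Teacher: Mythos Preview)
Your decomposition into bridge fluctuation plus slope correction is sound, and the slope correction (Tracy--Widom tails on $\Delta_\pm$ with $u\asymp tn^{-1/3}$) does deliver the $e^{-c_k'm^{3/2}}$ rate. The gap is in the barrier-removal step. The event $\{\mu_0(E\mid\text{data})<1/2\}$ depends only on the conditioning data and on your fixed choice of $u$; its probability is some constant $\epsilon_k$ that does not involve $m$. In the decomposition $\p(A)\le 2\,\E[\mu_0(A)]+\p(\text{bad data})$, the second term contributes $\epsilon_k$ for every $m$, so the resulting bound cannot decay like $c_ke^{-c_k'm^{3/2}}$ as $m\to\infty$. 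Making $u$ depend on $m$ does not rescue this: the lower tail of the endpoint gap $W^n_k-W^n_{k+1}$ is only polynomially small, never stretched-exponentially small. A secondary issue is that your lower bound on $\mu_0(E)$ requires uniform-in-time control of the path $W^n_{k+1}|_I$, which is essentially the estimate you are trying to prove, applied to line $k+1$.

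The paper avoids both problems by a different route. First, it uses the time-inversion identity
\[
W^n_k(p)-W^n_k(q)\eqd\sqrt{p/q}\,W^n_k(q)-\sqrt{q/p}\,W^n_k(p)
\]
to reduce the two-sided increment bound to a one-sided one, up to an error controlled pointwise by Theorem~\ref{T:top-bd}. Second, for the one-sided bound it resamples on $[1,1+r]$ with $r=m/(2\sqrt{sn})$ depending on $m$, and uses the monotonicity Lemma~\ref{L:monotone-gibbs} to send the lower barrier to $-\infty$ and collapse the other $k-1$ endpoint pairs onto those of line $k$. This yields an exact stochastic domination $W^n_k\ge L+B$, where $L$ is the chord through the two endpoints and $B$ is the bottom line of a Brownian $k$-melon, with no additive loss whatsoever. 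The melon part is then handled by Proposition~\ref{P:bridge-bd}, and the $m$-dependent choice of $r$ converts the pointwise Tracy--Widom tails at $1$ and $1+r$ into the required decay for the chord part. The key idea your proposal is missing is the use of monotonicity to remove the barrier exactly, together with the one-sided reduction that makes a one-directional stochastic comparison sufficient.
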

	
	Significantly, the bound in Proposition~\ref{P:Dyson-tails-i} does not get worse with $n$ at the scale where Dyson's Brownian motion converges to the parabolic Airy line ensemble. Therefore together with the Kolmogorov-Centsov criterion and convergence of the finite-dimensional distributions to those of the parabolic Airy line ensemble, Proposition~\ref{P:Dyson-tails-i} immediately implies functional tightness of Dyson's Brownian motion at the edge.
	
	\begin{theorem}
		\label{T:CH-i}
		Define
		$$
		\scrC^n_k(t) = \lf(W^n_k(1 + 2tn^{-1/3}) - 2 \sqrt{n} - 2tn^{1/6} \rg)n^{1/6}.
		$$
		For every $k$, $\{\scrC^n_k, n\ge 1\}$ is tight with respect to the uniform-on-compact topology.
	\end{theorem}
	
	Theorem~\ref{T:CH-i} was first obtained by \cite{CH}.
	Finally, we note that Proposition~\ref{P:Dyson-tails-i} can be used to give a type of law of the iterated logarithm for Dyson's Brownian motion, see Proposition~\ref{P:cross-prob}.
	\subsection{Related work}\label{s:related}
	
	The main tools we use were introduced by Corwin and Hammond~\citep{CH}. They are
	the Brownian Gibbs property and a monotonicity lemma for nonintersecting Brownian bridges with respect to their endpoints, see Lemma~\ref{L:monotone-gibbs}.
	
	
	Ergodicity of the Airy line ensemble for time-shifts was proven by Corwin and Sun~\citep{corwin2014ergodicity}. Hammond~\citep{hammond2016brownian} used the Brownian Gibbs property to prove Radon-Nikodym derivative and other regularity bounds for the parabolic Airy line ensemble with respect to Brownian bridges. 
	Calvert, Hammond, and Hegde~\citep{calvert2019brownian} built on the results of~\citep{hammond2016brownian} to give Radon-Nikodym derivative bounds for parabolic Airy lines with respect to Brownian motion. Both the results of~\citep{hammond2016brownian} and~\citep{calvert2019brownian} apply to more general line ensembles.
	Hammond~\citep{hammond2019modulus, hammond2020exponents, hammond2019patchwork} applied the results in ~\citep{hammond2016brownian} to understanding problems about the geometry of last passage paths in Brownian last passage percolation and the roughness of limiting growth profiles in that model. 
	
	The work of Hammond~\citep{hammond2016brownian} and Calvert, Hammond, and Hegde~\citep{calvert2019brownian} is tailored to studying properties of individual Airy lines at the edge, and individual lines in more general Brownian ensembles. In particular, many of the highlights of these works concern path properties of the Airy process $\scrL_1(t) + t^2$. In comparison, our work is focused on understanding the joint behaviour of large regions of the Airy line ensemble, and on controlling the behaviour of lines deeper in the bulk.

	
	One goal of much of the above work is to characterize the Airy line ensemble without relying as much on determinantal formulas.  Such a characterization could allow a method for proving convergence results for models within the KPZ universality class that either have no exact formulas, or only have intractable formulas.
	In particular, Corwin and Hammond~\citep{CH} conjectured that, up to a value shift, the parabolic Airy line ensemble is the unique nonintersecting line ensemble satisfying both the Brownian Gibbs property and stationarity after the addition of a parabola. In~\citep{corwin2016kpz}, they outlined how this conjecture could be used to prove convergence of the KPZ equation to the Airy process. Progress towards proving this conjecture was made by Dimitrov and Matetski~\citep{dimitrov2020characterization}, who showed that the parabolic Airy line ensemble is uniquely characterized by the Brownian Gibbs property and the law of its top line. 
	
	
	Finally, our work and its application in~\cite{DOV} is part of a growing body of literature focussed on understanding last passage percolation and the KPZ universality class via probabilistic and geometric methods, rather than purely through analysis of exact formulas. In addition to the work discussed above, some prominent examples of this include Basu, Sidoravicius, and Sly's resolution of the slow bond problem~\cite{basu2014last}, see also~\cite{basu2017invariant}, Corwin and Hammond's study of the KPZ line ensemble~\cite{corwin2016kpz}, and work on understanding the structure of last passage geodesics, such as Basu, Hoffman, and Sly~\cite{basu2018nonexistence}; Basu, Sarkar, and Sly~\cite{basu2017coal}; Georgiou, Rassoul-Agha, and Sepp\"al\"ainen~\cite{georgiou2017geodesics}; Hammond and Sarkar~\cite{hammond2020modulus}; and Pimentel~\cite{pimentel2016duality}.

	\subsection*{Organization of the paper}
	The paper is organized as follows. In Section~\ref{S:preliminaries}, we define the most important terms. 
	
	In Section~\ref{S:nonintersecting} we prove a concentration result for Dyson's Brownian motion, and we prove increment tail bounds and a modulus of continuity result for nonintersecting Brownian bridges. Proposition~\ref{P:nonint-dev-i} is proven as Lemma~\ref{L:bridge-disc} within this section. 
	
	Section~\ref{S:mod-Dyson} gives tail bounds for increments of the top Dyson's Brownian motion lines.  Proposition~\ref{P:Dyson-tails-i} is restated and proven in this section as Proposition~\ref{P:dyson-tails}, and Theorem~\ref{T:CH-i} is proven immediately after that proposition.
	
	In Section~\ref{S:airy-point} we study the
	Airy point process 
	$
	\scrL_1(0) > \scrL_2(0) > \dots.
	$
	We recall and prove theorems about point locations and prove new results about close points. Proposition~\ref{P:delta-chains-i} follows as a special case of Proposition~\ref{P:delta-chains}, and Proposition~\ref{P:mean-conc-i} is restated and proven as Proposition~\ref{P:mean-conc}. 
	
	In Section~\ref{S:moduli} we establish a preliminary modulus of continuity bound on the parabolic Airy line ensemble and prove the law of large numbers, Theorem~\ref{T:lln-i}. Theorem~\ref{T:airy-mod-i} is proven in this section as a special case of Theorem~\ref{T:airy-mod}.
	
	In Section~\ref{S:bridge} we prove the bridge representation, Theorem~\ref{T:bridge-rep-i}. This theorem is restated and proven as Theorem~\ref{T:bridge-rep}. 
	
	Section~\ref{S:pr-bridge} contains the proof of the modulus of continuity result, Theorem~\ref{T:mod-cont-i}. We actually prove a slightly stronger result, Theorem~\ref{T:mod-cont}, from which Theorem~\ref{T:mod-cont-i} follows as an immediate corollary.
	
	
	Some of the technical results we prove here are stated in greater generality than we need in the paper (in particular, Lemma~\ref{L:levy-est}), or for a greater range of parameters. We do this for use in the paper~\cite{DOV} and in other future work.
	\section{Preliminaries}\label{S:preliminaries}
	
	In this section we recall definitions related to Dyson's Brownian motion and the parabolic Airy line ensemble.
	Let
	$$
	W^k = (W^k_1, \dots, W^k_k):[0, \infty) \to \R
	$$
	be $k$ independent Brownian motions started from the initial condition $W^k_i(0) = 0$ and conditioned not to intersect on $[\ep, t]$ for some $\ep, t > 0$. The limit as $\ep \to 0$ and $t \to \infty$ of this process exists (e.g. see \cite{grabiner1999brownian} and Section 2 in \cite{o2002random} for details regarding this), and is known as a $k$-level \textbf{Dyson's Brownian motion}.
	
	A $k$-level Dyson's Brownian motion has the same distribution as the eigenvalues of a matrix-valued Brownian motion in the space of $k \X k$ Hermitian matrices with complex entries. This is a result that goes back to Dyson \cite{dyson1962brownian}, see also \cite{grabiner1999brownian} for the connection with nonintersecting random walks. Note that just like with Brownian motion, the law of $W^k$ is invariant under Brownian scaling and time inversion. To see this, we could simply apply Brownian scaling and time inversion prior to taking the $\ep \to 0, t \to \infty$ limiting procedure above.
	
	A \textbf{Brownian $k$-melon}
	$$
	B^k = (B^k_1, \dots, B^k_k):[0, t] \to \R^k
	$$
	is a system of $k$ independent Brownian bridges $B_1, \dots, B_k$ with $B^k_i(0) = B^k_i(t) = 0$ for all $i \in \{1, \dots, k\}$, conditioned in a similar limiting fashion so that
	$$
	B^k_1(s) > B^k_2(s) > \dots > B^k_k(s)
	$$
	for all $s \in (0, t)$ (the reason for the name is that nonintersecting Brownian bridges look like stripes on a watermelon). We note that analogously to the usual relationship between Brownian bridge and Brownian motion, we have that
	\begin{equation}
	\label{E:melon-dyson}
	B^k(s) \eqd  \frac{t - s}{\sqrt{t}}W^k\lf(\frac{s}{t - s}\rg),
	\end{equation}
	where the equality above holds in distribution in the space of $k$-tuples of random continuous functions from $[0, t]$ to $\R$. This follows by relating nonintersecting Brownian motions to nonintersecting Brownian bridges before passing to the limit.
	
	
	More precisely, if $W^k$ is a collection of $n$ independent Brownian motions with initial conditions $W^k_i(0) = 0$ conditioned not to intersect on $[\ep, t_0]$, then the process $B^k$ defined from $W^k$ as in \eqref{E:melon-dyson} is a collection of $n$ Brownian bridges with $B^k_i(0) = B^k_i(t) = 0$, conditioned not to intersect on $[\ep t/(1+\ep), t_0 t/(1 + t_0)]$. As we take $\ep \to 0, t_0 \to \infty$, $W^k$ converges to a Dyson's Brownian motion and $B^k$ converges to a Brownian $k$-melon.
	
	
	We say that a Brownian motion or a Brownian bridge has \textbf{variance $v$} if its quadratic variation in an interval $[s,t]$ is equal to $v(t-s)$. We say that a Dyson's Brownian motion or a Brownian $k$-melon has variance $v$ if the component Brownian motions have variance $v$.
	
	
	The top lines of an $n$-level Dyson's Brownian motion (or alternatively, a Brownian $n$-melon) converge in law to a limit called the parabolic Airy line ensemble as $n \to \infty$. To state this convergence precisely, we first discuss line ensembles and define the limiting process.
	
	\subsection*{Line ensembles}
	
	\begin{definition}
		\label{D:line-ensemble} A \textbf{line ensemble} $\scrR = (\scrR_1, \scrR_2, \dots)$ is a possibly finite sequence of random functions where each $\scrR_i:I \to \R$ for some interval $I \sset \R$. We say that a line ensemble is \textbf{ordered} if almost surely,
		$$
		\scrR_i(x) \ge \scrR_{i+1}(x) \qquad \mathforall i \in \N, x \in \R.
		$$
		We say that $\scrR$ is \textbf{strictly ordered} if strict inequality can replace weak inequality above for all $i, x$.
	\end{definition}
	
	We write $\scrR|_{\{i, \dots, k\} \X [c, d]}$ for the sequence $(\scrR_i, \dots, \scrR_k)$ restricted to the interval $[c, d]$.
	
	\begin{definition}
		\label{D:brownian-gibbs} An ordered line ensemble $\scrL$ satisfies the \textbf{Brownian Gibbs property} (with variance $v$) if the following holds for all $k \in \{0, 1, \dots\}, \ell \in \N$ and $[c, d] \sset I$ for which $\scrR_{k+1}, \dots, \scrR_{k+\ell}$ are all defined. Let
		$\scrF$ be the $\sigma$-algebra generated by the set
		$$
		\{\scrR_i(x) : (i, x) \notin \{k + 1, \dots, k + \ell\} \X [c, d] \}.
		$$
		Then the conditional distribution of $\scrR|_{\{k + 1, \dots, k + \ell\} \X [c, d]}$ given $\scrF$ is equal to the law of $\ell$ independent, variance $v$, Brownian bridges $B_1, \dots, B_\ell: (c, d) \to \R$ with $B_i(c) = \scrL_{k+i}(c)$ and $B_{k+i}(d) = \scrR_i(d)$ for all $i \in \{1, \dots, \ell\}$, conditioned on the event
		$$
		\scrR_k(r) > B_1(r) > B_2(r) >\dots > B_\ell(r) > \scrR_{k+\ell+1}(r) \qquad \mathforall r \in (c, d).
		$$
		(Again, this conditioning should be understood in the same limiting sense as in the definition of Dyson's Brownian motion in the case when some endpoints are equal).
		If $\scrR_k$ or $\scrR_{k+\ell+1}$ does not exist, then drop the corresponding inequality from the conditioning.
	\end{definition}
	
	Rather than repeating the above statements throughout the paper to describe a sequence of Brownian bridges with the above properties, we will simply say that $B_1, \dots, B_\ell$ is a sequence of Brownian bridges with endpoints $B_i(c) = \scrR_{k+i}(c)$ and $B_i(d) = \scrR_{k+i}(d)$ conditioned to avoid each other and the boundaries $\scrR_k, \scrR_{k + \ell  + 1}$.
	
	
	Note that both Dyson's Brownian motion and Brownian $k$-melons are line ensembles with the Brownian Gibbs property.
	
	\begin{definition}
		The \textbf{parabolic Airy line ensemble} $\scrL = \{\scrL_i : i \in \N\}$ is a continuous, strictly ordered line ensemble with lines $\scrL_i:\R \to \R$ indexed by $\N$ defined by the requirement that the process $\scrA(t) = \scrL(t) + t^2 $ is a determinantal process with kernel
		\begin{equation}
		\label{E:airy-kernel}
		K((x, s) ; (y, t)) =
		\begin{cases}
		\int_{0}^\infty d \lambda e^{-\lambda(s - t)}\Ai(x + \la)\Ai(y + \la), \qquad & \mathif s \ge t, \\
		-\int_{-\infty}^0 d \lambda e^{-\lambda(s - t)}\Ai(x + \la)\Ai(y + \la), \qquad & \mathif s < t,
		\end{cases}
		\end{equation}
		where $\Ai(\cdot)$ is the Airy function. The process $\scrA(t) = \scrL(t) + t^2$ is stationary, and is known simply as the \textbf{Airy line ensemble}.
	\end{definition}
	
	See~\cite{hough2009zeros} for general background on determinantal processes, including the definition of a determinantal point process from a kernel. We note that the kernel formula \eqref{E:airy-kernel} implies that $\scrL(\cdot)$ is equal in distribution to its time reversal $\scrL(- \;\cdot)$.
	
	
	Prah\"ofer and Spohn~\citep{prahofer2002scale} first showed that a multi-line process with the kernel~\eqref{E:airy-kernel} arises a scaling limit of the polynuclear growth model. Adler and van Moerbeke~\citep{adler2005pdes} then showed that the finite dimensional distributions of a rescaled $n$-level Dyson's Brownian motion also converge to such a multiline process. Corwin and Hammond \citep{CH} then rigorously showed that this limit can be realized as a process of locally Brownian, nonintersecting functions. They also showed that  Dyson's Brownian motion converges to the parabolic Airy line ensemble uniformly on compact sets, and that the parabolic Airy line ensemble satisfies the Brownian Gibbs property. Note that the convergence in~\cite{CH} is proven for Brownian melons, rather than Dyson's Brownian motion. The two convergence statements are equivalent by~\eqref{E:melon-dyson}.
	To describe these theorems, we first introduce the scaling of Dyson's Brownian motion.
	
	
	Let $(W^n =(W^n_1, \dots, W^n_n))_{n \in \N}$ be a sequence of $n$-level Dyson's Brownian motions. For $k < n$, define
	\begin{equation}
	\label{E:rescaled-dyson}
	\scrC^n_k(t) = \lf(W^n_k(1 + 2tn^{-1/3}) - 2 \sqrt{n} - 2tn^{1/6} \rg)n^{1/6}.
	\end{equation}
	
	Let $\scrC^n$ be the line ensemble with $n$ lines whose $k$th line is given by $\scrC^n_k$. The line ensemble $\scrC^n$ has the Brownian Gibbs property since it is an affine shift of a Dyson's Brownian motion. Note that the variance of $C^n$ is $2$, rather than $1$, because of how time and space were rescaled.
	
	\begin{theorem} [\citep{adler2005pdes},~\citep{CH}]
		\label{T:airy-BG}
		The parabolic Airy line ensemble $\scrL$ is the distributional limit of the line ensembles $\scrC^n$ as $n \to \infty$ in the sense that for any set $\{1, \dots, k\} \X [c, d] \sset \Z \X \R$, the functions $\scrC^n|_{\{1, \dots, k\} \X [c, d]}$ converge in distribution to $\scrL|_{\{1, \dots, k\} \X [c, d]}$ in the topology of uniform convergence of $k$-tuples functions from $[c, d] \to \R$.
		Moreover, $\scrL$ has the Brownian Gibbs property with variance $2$ and is a strictly ordered line ensemble with probability $1$.
	\end{theorem}

	\begin{figure}%
		\centering
		\includegraphics[width=6cm]{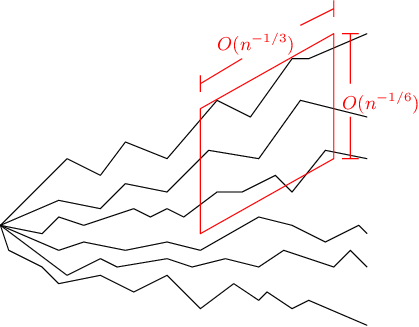}
		\caption{Dyson's Brownian motion from time $0$ to time $1$. If we zoom in around the location $(1, 2\sqrt{n})$ on an $O(n^{-1/3})\times O(n^{-1/6})$ parallelogram with slope $n^{1/6}$ and take $n \to \infty$, then the process converges to the parabolic Airy line ensemble. Note that by Brownian scaling, this is equivalent to scaling Dyson's Brownian motion around the location $(n, 2n)$ on a $O(n^{2/3})\times O(n^{1/3})$ box, which is the standard KPZ scaling.}%
		\label{fig:melon}%
	\end{figure}
	
	The main obstacle in the paper~\citep{CH} for proving Theorem~\ref{T:airy-BG} was in showing tightness of the line ensembles $\scrC^n$. We give a new proof of this fact in Section~\ref{S:mod-Dyson}, see also Theorem \ref{T:CH-i} in the introduction.
	
	
	We also record here an intuitive lemma from~\citep{CH} which gives monotonicity in the endpoints and boundary conditions for nonintersecting Brownian bridge ensembles. For this lemma, let $\R^k_\ge$ be the set of $\mathbf{x} \in \R^k$ such that $x_i \ge x_{i+1}$ for all $i \in \{1, \dots, k-1\}$.
	
	\begin{lemma}[Lemmas 2.6 and 2.7,~\citep{CH}]
		\label{L:monotone-gibbs}
		Fix $k \in \N$ and $a < b \in \R$.
		Let $\mathbf{w}, \mathbf{x}, \mathbf{y}, \mathbf{z} \in \R^k_\ge$ be such that $w_i \ge x_i$ and $y_i \ge z_i$ for all $i$. Also, let $f, g : [a, b] \to \R \cup \{\pm\infty\}$ be Borel measurable functions such that
		$$
		f(a) \le x_k, \;\;g(a) \le w_k \quad \mathand \quad f(b) \le z_k, \;\; g(b) \le y_k.
		$$
		Finally, assume that $f(r) \le g(r)$ for all $r \in [a, b]$. Then there exists a $2k$-tuple of random functions $(B_1, \dots, B_k, C_1, \dots, C_k)$ where each function has domain $[a, b]$ such that the following holds:
		\begin{enumerate}[nosep, label=(\roman*)]
			\item The sequence $(B_1, \dots, B_k)$ has the distribution of $k$ Brownian bridges with $B_i(a) = x_i$ and $B_i(b) = z_i$, conditioned to avoid $f$ and each other.
			\item  The sequence $(C_1, \dots, C_k)$ has the distribution of $k$ Brownian bridges with $C_i(a) = w_i$ and $C_i(b) = y_i$, conditioned to avoid $g$ and each other.
			\item $C_i(r) \ge B_i(r)$ for all $r \in [a, b]$.
		\end{enumerate}
	\end{lemma}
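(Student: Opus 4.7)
The plan is to construct the monotone coupling via Glauber-type Markov chain dynamics on non-intersecting bridge ensembles, relying on a single-bridge monotonicity lemma as the base case. The base case is the following assertion: if $\beta, \beta'$ are Brownian bridges on $[a, b]$ from $\alpha, \alpha'$ to $\zeta, \zeta'$ respectively, conditioned to lie above continuous lower barriers $\phi, \phi'$ and below continuous upper barriers $\psi, \psi'$, with $\alpha \le \alpha'$, $\zeta \le \zeta'$, $\phi \le \phi'$, $\psi \le \psi'$, then they admit a coupling with $\beta \le \beta'$ pointwise. I would prove this by discretizing to random walk bridges, where the analogous statement reduces to an elementary local-swap coupling, and then passing to the Brownian scaling limit.

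Next, for fixed endpoints $(\alpha_i, \zeta_i)_{i=1}^k$ and lower barrier $h$, I would define a Markov chain on non-intersecting bridge ensembles by the following Glauber step: pick $i \in \{1, \dots, k\}$ uniformly and resample the $i$-th curve as a Brownian bridge from $\alpha_i$ to $\zeta_i$, conditioned to lie strictly below the $(i-1)$-th curve (taken to be $+\infty$ if $i=1$) and strictly above the maximum of the $(i+1)$-th curve and $h$ (with $-\infty$ if $i=k$). This chain is ergodic with the non-intersecting bridge law as its unique stationary distribution. Run two copies in parallel — the $B$-chain with parameters $(\mathbf{x}, \mathbf{z}, f)$ and the $C$-chain with $(\mathbf{w}, \mathbf{y}, g)$ — sharing the same random index and, at each update, applying the base coupling jointly. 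Starting from any ordered pair of initial configurations $B^{(0)} \le C^{(0)}$ (which exists by piecewise-linear interpolation with sufficient vertical separation), the inductive hypothesis $B_i \le C_i$ combined with $f \le g$, $\mathbf{w} \ge \mathbf{x}$, $\mathbf{y} \ge \mathbf{z}$ ensures that at every update the $C$-side upper barrier, lower barrier, and endpoints dominate their $B$-side counterparts pointwise; the base case then delivers a monotone resample, and the ordering is preserved. Taking $t \to \infty$ and invoking ergodicity yields the required coupling of the stationary laws.

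The principal obstacle is the base monotonicity: conditioning on the zero-probability event that a Brownian bridge stays strictly in a strip is singular, so one must either discretize (where the event has positive probability and monotone coupling is purely combinatorial) and establish stability under the scaling limit, or alternatively use an explicit reflection-coupling construction of the conditioned diffusion. Convergence of the Markov chain in the infinite-dimensional path space is a secondary technical point, handled by standard coupling arguments once one observes that two copies driven by the same noise and started from ordered data remain ordered and eventually meet.
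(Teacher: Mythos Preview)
The paper does not supply its own proof of this lemma; it is quoted verbatim as Lemmas~2.6 and~2.7 of Corwin--Hammond \cite{CH} and used as a black box. Your proposed argument---single-bridge monotonicity in a strip as the base case, then lifting to $k$ bridges via a Glauber heat-bath Markov chain whose stationary law is the non-intersecting ensemble, with two coupled copies preserving the pointwise order at every resampling step---is correct and is in fact the strategy employed in \cite{CH}. The technical caveats you flag (handling the singular conditioning by discretization and controlling convergence of the chain in path space) are exactly the issues that arise there and are resolved along the lines you indicate.
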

	
	In this lemma, the definition of nonintersecting bridges starting or ending at the same location should be understood in the same limiting sense as in the definition of Dyson's Brownian motion. Note also that we can consider the case of no upper or lower boundary condition on the bridges (that is, $f$ or $g$ equal to $\pm\infty$). We will also use the limiting case when the endpoints of a few top or bottom bridges are taken to $\pm \infty$, essentially removing them from the conditioning.
	
	\section{Nonintersecting Brownian ensembles} \label{S:nonintersecting}
	
	We first prove a bound on the deviation of the $k$th point in Dyson's Brownian motion.
	
	
	The points at time $1$ in a Dyson's Brownian motion are equal in distribution to the eigenvalues of the Gaussian unitary ensemble. Using this, Ledoux and Rider~\citep{ledoux2010small} show the $k = 1$ case of the following theorem. Their proof extends to general $k$. Note that this theorem can also be deduced from bounds coming from determinantal formulas (i.e. see~\citep{gustavsson2005gaussian}).
	\begin{theorem}
		\label{T:top-bd}
		There exist positive constants $\{c_k, d_k: k \in \N\}$ such that the following holds. Let $W^n_k$ be the $k$th line of an $n$-level Dyson's Brownian motion. Then for all $m \in (0, 5n^{2/3})$ and $n \ge k$ we have
		\begin{align}
		\label{E:maxdk-up}
		\prob(W^n_k(1) - 2 \sqrt{n} \ge m n^{-1/6} ) &\le c_1 e^{-d_1 m^{3/2}}, \quad \mathand \\
		\label{E:maxdk-low}
		\prob(W^n_k(1) - 2 \sqrt{n} \le - m n^{-1/6} ) &\le c_k e^{-d_k m^3}.
		\end{align}
		Also, for all $m \ge 5n^{2/3}$ and $n \ge 1$ we have
		\begin{equation}
		\label{E:large-devW}
		\p(|W^n_k(1) - 2 \sqrt{n}\;| \ge m n^{-1/6}) \le c_1 e^{-d_1 n^{-1/3} m^2}.
		\end{equation}
	\end{theorem}
	
	\begin{proof}
		The $k=1$ case of~\eqref{E:maxdk-up} is shown in~\cite{ledoux2010small} for $m < n^{2/3}$. By replacing the constant $d_1$ with $d_1/5^{3/2}$ we can guarantee that \eqref{E:maxdk-up} holds for all $m < 5n^{2/3}$. Monotonicity of $W^n_k(1)$ in $k$ implies~\eqref{E:maxdk-up} for all $k$.
		
		Next, the bound
		\begin{equation}
		\label{E:p-Wnk-up}
		\p(W^n_k(1) - 2 \sqrt{n} \ge m n^{-1/6}) \le c_1 e^{-d_1 n^{-1/3} m^2}
		\end{equation}
		for $k = 1$ and $m \ge n^{2/3}$ follows from the large deviation theory of the Gaussian unitary ensemble, see~\citep{ledoux2007deviation}, Equation (2.7).  Again, monotonicity of $W^n_k(1)$ in $k$ implies~\eqref{E:p-Wnk-up} for all $k$. Now we have the implications
		$$
		W^n_k(1) - 2 \sqrt{n} \le - m n^{-1/6} \Rightarrow W^n_n(1) - 2 \sqrt{n} \le - m n^{-1/6} \Leftrightarrow -W^n_n(1) - 2 \sqrt{n} \ge  m n^{-1/6}-4\sqrt{n}.
		$$
		Using this, the symmetry $W^n_n \eqd -W^n_1$, and the inequality \eqref{E:p-Wnk-up}, when $m \ge 5 n^{2/3}$ we get the lower tail bound
		\begin{equation}
		\label{E:5n23}
		\p(W^n_k(1) - 2 \sqrt{n} \le - m n^{-1/6}) \le c_1 e^{-d_1 n^{-1/3} m^2/25}
		\end{equation}
		for all $k\le n$. Combining \eqref{E:p-Wnk-up} and \eqref{E:5n23} and replacing the constant $d_1$ by $d_1/25$ yields \eqref{E:large-devW}.
		
		It just remains to prove~\eqref{E:maxdk-low}. The $k=1$ case is shown by Ledoux and Rider~\cite{ledoux2010small}. For the general $k$ case, we use the following simple extension of their argument.
		Let $H^c$ be the $n\times n$ symmetric tridiagonal matrix defined by
		$$
		H^c_{i,j}=\begin{cases}
		-2c\sqrt{n}-\frac{ci}{\sqrt{n}}+N_i/\sqrt{2}, & i=j
		\\ (\chi_{2(n-i \wedge j)} - \expt \chi_{2(n-i \wedge j)})/\sqrt{2}+c\sqrt{n} &|i-j|=1
		\end{cases}
		$$
		where the $N_k$ are independent standard normal random variables, and the $\chi_{2(n-j)}$ are independent $\chi$ random variables with parameter $2(n-j)$. Lemma 5 in~\cite{ledoux2010small} shows that for a universal constant $c$, almost surely
		$$
		\langle vH^c,v\rangle \le \langle v(T - 2\sqrt{n} I_n),v\rangle
		$$
		for all $v \in \R^n$, where $T$ is a tridiagonal matrix whose eigenvalues are $W^n_k(1)$. Note that in the above equation and throughout the proof, we use the notation $vH := v^T H$ to denote left multiplication by a row vector.
		
		Therefore by~\cite{horn2012matrix}, Corollary 4.3.3, letting $\la^k(H^c)$ be the $k$th largest eigenvalue of $H^c$, we have $\lambda_k(H^c)\le W^n_k(1)-2\sqrt{n}$.
		
		
		Let $v_1,\ldots, v_k$ be unit vectors whose supports as subsets of $\{1,\ldots, n\}$  have distance at least $3$ from each other. Let $P$ be the orthogonal projection onto $V = \text{span}\{v_1, \dots, v_k\}$. We consider $H_k=PH^cP$ as a linear operator on the $k$-dimensional subspace $V$. Since $H^c$ is tridiagonal and the supports of the $v_i$ are separated by distance $2$, $PH^cP$ is diagonal in the basis $\{v_i\}$ of $V$.
		Thus we have
		\begin{equation}
		\label{E:Wnk-4}
		W_k^n(1)-2\sqrt{n}\ge\lambda_k(H^c)\ge \lambda_k(H_k)= \min_{i=1}^k \langle v_i H_k ,v_i\rangle.
		\end{equation}
		The second inequality follows since orthogonal projections do not increase $k$th top eigenvalues, see~\cite{horn2012matrix}, Corollary 4.3.16. It remains to find suitable vectors $v_1, \dots, v_k$ that make the right hand side above large.
		
		
		Let $b_k \in (0, 1)$ be a small constant to be specified later. Let $\ell=\ceil{b_k mn^{1/3}}$ and let
		$$v^n_i(j+3(i-1)\ell) = j/\ell\wedge \left(1-j/\ell\right) \,\,\,\mbox{ for }\,\, 1\le j \le 2\ell$$
		and set $v^n_i(j) = 0$ otherwise.

		First assume that $m$ satisfies $m b_k \in [2, n^{2/3}/(3k)]$. In this case, the supports of $v^n_i$ are distance at least $3$ from each other and all the vectors $v^n_i$ are well-defined in $\R^n$. To prove~\eqref{E:maxdk-low} for $m$ in this range it is enough to show that
		\begin{equation}\label{E:LR-bound}
		\p(\langle v^n_iH_k,v^n_i\rangle < - mn^{-1/6}\|v^n_i\|^2)<c_ke^{-d_km^3}.
		\end{equation}
		The claim~\eqref{E:maxdk-low} follows from~\eqref{E:LR-bound} by a union bound and the bound~\eqref{E:Wnk-4} applied to the normalized vectors $v^n_k/\|v^n_k\|$. The proof of \eqref{E:LR-bound} for $v^n_1$ is on page 1331 of~\cite{ledoux2010small}. We use the same argument for the $k > 1$ case.
		
		
		First, since $P v^n_i = v^n_i$ for all $i$, we have that
		$$
		\langle v^n_iH_k,v^n_i\rangle = \langle v^n_iP H^c, Pv^n_i\rangle = \langle v^n_iH^c, v^n_i\rangle= H_c(v^n_i, N) + \chi(v^n_i),
		$$
		where $N = (N_1, \dots, N_n)$ is a vector of $n$ independent standard normal random variables,
		\begin{align*}
		H_c(v, g) &= \frac{1}{\sqrt{2}} \sum_{j=1}^n g_j v(j)^2 - c\sqrt{n} \sum_{j=0}^n (v(j + 1) - v(j))^2 - \frac{c}{\sqrt{n}} \sum_{j=1}^n j [v(j)]^2\quad \mathand \\
		\chi(v) &= \sqrt{2} \sum_{j=1}^{n-1} [\chi_{2(n - j)} - \expt \chi_{2(n - j)}] v(j)  v(j+1).
		\end{align*}
		Here it is understood that $v(0) = v(n+1) = 0$. By a union bound, the left side of \eqref{E:LR-bound} is bounded above by
		\begin{equation}
		\label{E:Hcc}
		\p\lf(H_c(v^n_i, N) < - \frac{m}2 n^{-1/6} ||v_i^n||^2\rg) + \p\lf(\chi(v^n_i) < - \frac{m}2 n^{-1/6} ||v_i^n||^2\rg).
		\end{equation}
		To bound these probabilities, we will use that
		\begin{equation}
		\label{E:asymptotics'}
		\begin{split}
		&||v^n_i||^2 \sim \sum_{j=1}^n [v^n_i(j)]^4 \sim \sum_{j=1}^{n-1} [v^n_i(j)]^2 [v^n_i(j+1)]^2 \sim b_k m n^{1/3}, \\
		&\sum_{j=0}^n (v^n_i(j + 1) - v^n_i(j))^2 \sim \frac{1}{b_k mn^{1/3}}, \quad \mathand \quad \sum_{j=1}^n j [v^n_i(j)]^2 \sim k b_k^2 m^2 n^{2/3},
		\end{split}
		\end{equation}
		where the notation $a \sim b$ here means that the ratio $a/b$ is bounded above and below by positive constants, uniformly over all choices of $n \in \N$ and $m$ satisfying $m b_k \in [2, n^{2/3}/(3k)]$. We start with the first term in \eqref{E:Hcc}. As long as the constant $b_k$ was chosen small enough (i.e. taking $b_k = \ep/k$ for a small $\ep > 0$ works), the asymptotics in \eqref{E:asymptotics'} guarantee that
		\begin{align}
		\nonumber
		\p\lf(H_c(v^n_i, N) < - \frac{m}2 n^{-1/6} ||v_i^n||^2\rg) &\le \p\lf(\sum_{j=1}^n N_j v(j)^2 \le - \frac{m}4 n^{-1/6} ||v_i^n||^2  \rg).
		\nonumber
		\end{align}
		Converting the sum of independent normals to a single normal $N_1$, the right hand side above is equal to
		\begin{align}
		\label{E:N1-bound}
		\p\lf(N_1 \sqrt{\sum_{j=1}^n v(j)^4} \le - \frac{m}4 n^{-1/6} ||v_i^n||^2  \rg).
		\end{align}
		By the asymptotics in \eqref{E:asymptotics'}, this is bounded above by
		\begin{equation}
		\label{E:plff}
		\p\lf(N_1 \le - c b^{1/2}_k m^{3/2}  \rg)\le e^{-d_k m^3},
		\end{equation}
		where $d_k$ depends on $k$ only.
		We now turn to the second probability in \eqref{E:Hcc}. By Lemma 7 in \cite{ledoux2010small}, for any chi distributed random variable $\chi$ with parameter greater than or equal to $1$ and $\la > 0$, we have that
		\begin{equation}
		\label{E:lachi}
		\expt e^{-\la (\chi - \expt \chi)} \le e^{\la^2/2}.
		\end{equation}
		By Markov's inequality and \eqref{E:lachi}, for $r, \la > 0$ and any vector $v$ we have
		\begin{align*}
		\p\lf( \chi(v) \le -r ||v||^2 \rg) \le e^{-\la r ||v||^2} \expt e^{-\la \chi(v)} \le \exp \lf(-\la r ||v||^2 + \frac{\la^2}{\sqrt 2} \sum_{k=1}^{n-1} v(j)^2 v(j+1)^2 \rg).
		\end{align*}
		Optimizing over $\la$ and using the asymptotics in \eqref{E:asymptotics'} yields
		\begin{align*}
		\p\lf( \chi(v) \le -r ||v||^2 \rg) \le \exp \lf( - r^2 ||v||^4 \big\slash \lf( 4\sum_{k=1}^{n-1} v(j)^2 v(j+1)^2 \rg) \rg) \le e^{-d_k m^3},
		\end{align*}
		completing the proof of \eqref{E:LR-bound}.
		
		
		To extend the bound \eqref{E:maxdk-low} from $m \in [2/b_k, n^{2/3}/(3kb_k)]$ to all $m \in (0, 5n^{2/3}]$, first
		note that we can extend the upper bound on $m$ by a constant factor by replacing the constant $d_k$ by a smaller one. Finally, we can extend the lower bound to $0$ by making $c_k$ large enough so that the claim trivially holds for $m \le 2/b_k$.
	\end{proof}
	
	
	Our first use of this theorem is a modulus of continuity for Brownian bridges conditioned not to intersect.
	
	\begin{lemma}
		\label{L:bridge-disc} There exist constants $c,d$ so that the following holds. Let $k\in \N$, $B_i$, $i\in \{1,\ldots k\}$ be Brownian bridges
		with arbitrary start points $B_1(a) \ge \dots \ge B_k(a)$ and end points $B_1(b) \ge \dots \ge B_k(b)$, and slopes
		$$
		b_i = \frac{B_i(b) - B_i(a)}{b - a}
		$$
		on some time interval $[a,b]$, conditioned not to intersect. Then for any $i$, $t>0$, $s,s+t \in [a,b]$  we
		have
		$$
		\p(|B_i(s+t)-B_i(s)-b_it|\ge m\sqrt{t})\le e^{ck-dm^2}
		$$
		for universal constants $c, d> 0$.
	\end{lemma}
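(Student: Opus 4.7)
The strategy is to use monotonicity (Lemma~\ref{L:monotone-gibbs}) to sandwich $B_i$ pointwise between the top and bottom lines of two Brownian melons, and then to control these melons using the Brownian melon--Dyson Brownian motion correspondence \eqref{E:melon-dyson} combined with Theorem~\ref{T:top-bd}.

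We begin with the reductions. Subtracting the common affine function $r\mapsto B_i(a)+b_i(r-a)$ from every bridge preserves both the non-intersection ordering and every centred increment $B_j(s+t)-B_j(s)-b_jt$, so we may assume $b_i=0$ and $B_i(a)=B_i(b)=0$. A linear time change gives $[a,b]=[0,1]$. The joint law of a non-intersecting bridge ensemble is invariant under $(B_1,\dots,B_k)\mapsto(-B_k,\dots,-B_1)$, so it suffices to prove the one-sided tail $\p(B_i(s+t)-B_i(s)\ge m\sqrt{kt})\le ce^{-dm^2}$. After these normalizations the endpoints of the bottom $k-i+1$ bridges satisfy $0=B_i(0)\ge B_{i+1}(0)\ge\dots\ge B_k(0)$, and similarly at time $1$. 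By Lemma~\ref{L:monotone-gibbs}, removing the upper boundary $B_{i-1}$ (replacing it by $+\infty$) and then raising all of the remaining endpoints to the common value $0$ produces a Brownian $(k-i+1)$-melon whose top line $\widetilde B^{\uparrow}$ pointwise dominates $B_i$. A symmetric construction involving the top $i$ bridges yields a Brownian $i$-melon whose bottom line $\widetilde B^{\downarrow}$ is pointwise dominated by $B_i$.

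To control each melon, observe that for the top line of a Brownian $m$-melon on $[0,1]$, the identity $\widetilde B(r)=(1-r)W^m_1(r/(1-r))$ from \eqref{E:melon-dyson}, together with the Brownian scaling $W^m_1(\tau)\stackrel d{=}\sqrt\tau\,W^m_1(1)$ and Theorem~\ref{T:top-bd}, gives Gaussian-type concentration of $\widetilde B(r)$ around $2\sqrt{mr(1-r)}$ on scale $\sqrt{r(1-r)}\,m^{-1/6}$. The mean curve is Lipschitz with constant $O(\sqrt m)$ away from the endpoints (the boundary-layer cases being handled by the symmetry of the melon around $r=1/2$), so its increment between times $s$ and $s+t$ is of order $\sqrt m\,t\le\sqrt{mt}\le\sqrt{kt}$ for $t\le 1$; the analogous estimate applies to $\widetilde B^\downarrow$.

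The \emph{main obstacle} is that the pointwise sandwich $\widetilde B^{\downarrow}\le B_i\le\widetilde B^{\uparrow}$ alone yields only $B_i(s+t)-B_i(s)\le\widetilde B^{\uparrow}(s+t)-\widetilde B^{\downarrow}(s)=O(\sqrt k)$, which is far weaker than the target scale $\sqrt{kt}$. The sharper bound must instead come from comparing the increment of $B_i$ to the same-melon increments $\widetilde B^{\uparrow}(s+t)-\widetilde B^{\uparrow}(s)$ and $\widetilde B^{\downarrow}(s+t)-\widetilde B^{\downarrow}(s)$ on the common probability space furnished by Lemma~\ref{L:monotone-gibbs}; each of these is genuinely of the correct scale with Gaussian tails by the preceding step. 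A secondary subtlety is the short-time regime $t\lesssim k^{-4/3}$, in which the one-point fluctuation scale $k^{-1/6}$ exceeds $\sqrt{kt}$; here $\sqrt{kt}\ge\sqrt t$, and one falls back on the standard Brownian bridge increment estimate applied to $B_i$ viewed locally on $[s,s+t]$ as a bridge conditioned on non-intersection with its immediate neighbours.
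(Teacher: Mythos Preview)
Your reductions are fine and your identification of the ``main obstacle'' is exactly right: the pointwise sandwich $\widetilde B^{\downarrow}\le B_i\le\widetilde B^{\uparrow}$ only yields an increment bound of order $\sqrt{k}$, not $\sqrt{kt}$. The problem is that your proposed resolution does not actually work. Having $\widetilde B^{\downarrow}\le B_i\le\widetilde B^{\uparrow}$ pointwise on the common coupling of Lemma~\ref{L:monotone-gibbs} does \emph{not} give you any comparison between $B_i(s+t)-B_i(s)$ and the same-melon increment $\widetilde B^{\uparrow}(s+t)-\widetilde B^{\uparrow}(s)$; pointwise monotonicity simply says nothing about increments. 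So the step ``the sharper bound must instead come from comparing the increment of $B_i$ to the same-melon increments'' is a non sequitur, and the secondary fallback to a ``standard Brownian bridge increment estimate'' is equally unjustified once you are inside the non-intersection conditioning.

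The missing idea, which is what the paper does, is to \emph{condition on the full configuration $(B_1(s),\dots,B_k(s))$ at time $s$}. By the Markov/Gibbs structure the restriction to $[s,1]$ is again a system of non-intersecting bridges, now with left endpoints $B_j(s)$ and right endpoints $B_j(1)$. Apply Lemma~\ref{L:monotone-gibbs} on $[s,1]$: send the endpoints of the bridges above $B_i$ to $+\infty$ and lower the endpoints of those below $B_i$ to $(B_i(s),B_i(1))$. This dominates $B_i$ on $[s,1]$ by the linear interpolation $L$ from $B_i(s)$ to $B_i(1)=0$ plus the top line of a $(k-i+1)$-melon on $[s,1]$. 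Evaluating at $s+t$ gives
\[
B_i(s+t)-B_i(s)\;\le\;\frac{-B_i(s)\,t}{1-s}\;+\;\sqrt{\tfrac{t(1-s-t)}{1-s}}\,X,\qquad X\stackrel{d}{=}W^{k-i+1}_1(1),
\]
with $X$ independent of $B_i(s)$. The first term is now of the correct order because $|B_i(s)|$ itself has scale $\sqrt{k\,s(1-s)}$ (compare with the extremal line of a $k$-melon via one more application of \eqref{E:melon-dyson}), so $\frac{|B_i(s)|t}{1-s}\le\sqrt{t}\,Y^+$ with $Y\stackrel{d}{=}W^k_1(1)$. Altogether $B_i(s+t)-B_i(s)\le\sqrt{t}(X^++Y^+)$ and Theorem~\ref{T:top-bd} gives the Gaussian tail. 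The point is that by restarting the melon at time $s$ (rather than at time $0$) you get a comparison at the increment level with the right $\sqrt{t}$ scaling for free; no separate short-time argument is needed.
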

	
	Lemma~\ref{L:bridge-disc} is a more precisely stated version of Proposition~\ref{P:nonint-dev-i}.
	
	\begin{proof}
		By Brownian scaling, we may assume that $[a,b]=[0,1]$. We can further assume that $B_i(0)=B_i(1)=0$ for the $i$ in question. Also, by the time and value-reversal symmetry of the problem, we may assume $s\le 1/2$. For the price of a factor of $2$ we can just prove the upper bound on $B_i(s+t)-B_i(s)$.
		
		
		For this, consider the conditional processes $B_j:[s, 1] \to \R$ conditioned on the values of all the $B_j$ at time $s$. By the Brownian Gibbs property, the conditional law of these processes is that of $k$ independent Brownian bridges on $[s, 1]$ conditioned not to intersect. By Lemma~\ref{L:monotone-gibbs}, the conditional law of $B_i|_{[s, 1]}$ is dominated by moving the starting and ending points (at times $s$ and $1$) of all the other Brownian bridges up while keeping their order. Those above $B_i$ are moved to $(\infty,\infty)$ and those below are moved to $(B_i(s),B_i(1))$. Therefore $B_i|_{[s, 1]}$ can be coupled monotonically with a linear function from $B_i(s)$ to $B_i(1)$ plus the top line $C^{k-i+1}_1$ of an independent Brownian $(k-i+1)$-melon on the interval $[s, 1]$. This gives
		\begin{equation}
		\label{E:BiX}
		\begin{split}
		B_i(s+t)-B_i(s)\le \frac{-B_i(s)t}{1-s}+C^{k-i+1}_1(s+t).
		\end{split}
		\end{equation}
		By a similar domination argument with Lemma~\ref{L:monotone-gibbs}, the function $B_i:[0, 1] \to \R$ can be coupled with the bottom line $D^i_i$ of a Brownian $i$-melon with domain $[0, 1]$ so that $B_i \ge D^i_i$. Therefore the right hand side of~\eqref{E:BiX} is bounded above by
		$$
		\frac{-D^i_i(s)t}{1-s}+C^{k-i+1}_1(s+t).
		$$
		By the relationship~\eqref{E:melon-dyson} between Dyson's Brownian motion and Brownian $k$-melons, and by Brownian scaling, we have
		\begin{equation}
		\label{E:DiY}
		\frac{-D^i_i(s)t}{1-s}+C^{k-i+1}_1(s+t) \eqd \frac{t\sqrt{s(1-s)}}{1-s}Y^i_1(1)+\sqrt\frac{t(1-s-t)}{1-s}X_1^{k-i+1}(1),
		\end{equation}
		where $Y^i$ and $X^{k-i+1}$ are independent Dyson's Brownian motions with $i$ and $k - i + 1$ lines, respectively. Since $Y^i_1$ and $X^{k-i+1}_1$ are bounded above by their positive parts, we can bound the right hand side of~\eqref{E:DiY} above by
		\begin{equation}
		\label{E:frac-tt}
		\sqrt{t}\lf(\sqrt{\frac{t s}{1 -s}}\max(Y^i_1(1),0) + \sqrt{\frac{1 - s - t}{1-s}}\max(X_1^{k-i+1}(1), 0)\rg).
		\end{equation}
		Since $t \in [0, 1-s]$ and $s \in [0, 1/2]$, both of the square root factors inside the brackets in \eqref{E:frac-tt} are bounded above by $1$. Hence \eqref{E:frac-tt} is bounded above by
		\begin{equation}
		\sqrt{t}\lf(\max(Y^i_1(1),0) + \max(X_1^{k-i+1}(1), 0)\rg).
		\end{equation}
		Now, by ~\eqref{E:large-devW} in Theorem~\ref{T:top-bd}, for any $m \ge 7\sqrt{k}$ we have that
		\begin{align}
		\nonumber
		\p(Y^i_1(1) \ge m) &= \p\lf(Y^i_1(1) - 2\sqrt{i} \ge (m - 2\sqrt{i}) k^{1/6} k^{-1/6} \rg) \\
		\nonumber
		&\le c_1 \exp \lf(-d_1 (m - 2\sqrt{i})^2 \rg) \\
		\label{E:Y-bdd}
		& \le c_1 \exp \lf(-\frac{d_1}{2} m^2 \rg).
		\end{align}
		Here $c_1, d_1$ are independent of $i, k$. To apply the bound ~\eqref{E:large-devW} in Theorem~\ref{T:top-bd}, we have used that $(m - 2\sqrt{i})k^{1/6} \ge 5 k^{2/3}$ whenever $m \ge 7\sqrt{k}$. Now, we can choose $c$ large enough, indepedent of $k$, so that $\frac{1}2 e^{ck} \ge c_1$ and
		\begin{equation}
		\label{E:123}
		\frac{1}{2} \exp \lf(ck -\frac{d_1}{2} m^2 \rg) \ge 1
		\end{equation}
		for $m \le 7 \sqrt{k}$, for all $k \in \N$. Therefore by combining \eqref{E:Y-bdd} and \eqref{E:123}, we have that
		\begin{equation}
		\p(Y^i_1(1) \ge m) \le \frac{1}{2} \exp \lf(ck -\frac{d_1}{2} m^2 \rg)
		\end{equation}
		for all $m$. The same bound holds for $X^{k-i+1}_1$, so by a union bound we have that
		$$
		\p\lf(\sqrt{t}\lf(\max(Y^i_1(1),0) + \max(X_1^{k-i+1}(1), 0)\rg) \ge m \sqrt{t} \rg) \le \exp \lf(ck -\frac{d_1}{2} m^2 \rg),
		$$
		as desired.
	\end{proof}
	
	Modulus of continuity results naturally follow from statements such as the one in Lemma~\ref{L:bridge-disc}. The classical example of this is L\'evy's modulus of continuity of Brownian motion. 
	For future use (most notably in the paper~\cite{DOV}), we state this next lemma in greater generality than we need it here.
	
	
	\begin{lemma}
		Let $T=I_1\times \dots \times I_d$ be a product of bounded real intervals of length $b_1, \dots, b_d$. Let $c, a>0$.
		Let $\scrH$ be a random continuous function from $T$ taking values in a real vector space $V$ with Euclidean norm $|\cdot|$. Assume that for every $i \in \{1, \dots, d\}$, that there exist $\al_i \in (0,1), \beta_i, r_i > 0$ such that
		\label{L:levy-est}
		\begin{equation}
		\label{E:tail-bd}
		\p(|\scrH(t+e_i u) - \scrH(t)| \ge m u^{{\alpha_i}}) \le c e^{-a{m^{{\beta_i}}}}
		\end{equation}
		for every coordinate vector $e_i$, every $m>0$, and every $t,t+u e_i\in T$ with $u < r_i$. Set $\beta = \min_i \beta_i, \al = \max_i \al_i$, and $r = \max_i r_i^{\al_i}$. Then with probability one we have
		\begin{equation}
		|\scrH(t + s) - \scrH(t)| \le C \lf(\sum_{i=1}^d |s_i|^{\al_i} \log^{1/\beta_i} \lf(\frac{2 r^{1/\al_i}}{|s_i|} \rg) \rg),
		\end{equation}
		for every $t,t+s\in T$ with $|s_i| \le r_i$ for all $i$ (here $s = (s_1, \dots, s_d)$).
		Here $C$ is random constant satisfying
		$$
		\p(C > m) \le \lf[\prod_{i=1}^d \frac{b_i}{r_i} \rg] c c_0 e^{-c_1 m^{\beta}},
		$$
		where $c_0$ and $c_1$ are constants that depend on $\al_1, \dots, \al_d, \beta_1, \dots, \beta_d, d,$ and $a$. Notably, they do not depend on $b_1, \dots, b_d ,c$ or $r_1, \dots, r_d$.
	\end{lemma}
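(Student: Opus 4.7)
The plan is to adapt the classical dyadic chaining argument used for L\'evy's modulus of continuity of Brownian motion. The idea is to build a dyadic grid on $T$ at each scale $2^{-n}$, use \eqref{E:tail-bd} together with a union bound to control every increment between neighbouring grid points at level $n$, and then telescope from a coarse level down to arbitrarily fine levels, working one coordinate direction at a time so that each direction $i$ contributes its own factor $|s_i|^{\al_i}\log^{1/\beta_i}(\cdot)$.

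For each $n\ge 0$ I would take the dyadic grid $G_n\sset T$ whose $i$-th coordinate ranges over $\{k b_i 2^{-n}: 0\le k\le 2^n\}$; it has at most $2^{(n+1)d}$ points. For any neighbouring pair $(t, t+b_i 2^{-n}e_i)$ in $G_n$ with $b_i 2^{-n}<r_i$, applying \eqref{E:tail-bd} with $m=Mn^{1/\beta_i}$ gives
\begin{equation*}
\p\bigl(|\scrH(t+b_i 2^{-n}e_i)-\scrH(t)|\ge Mn^{1/\beta_i}(b_i 2^{-n})^{\al_i}\bigr)\le ce^{-aM^{\beta_i}n}.
\end{equation*}
Setting $\beta=\min_i\beta_i$, a union bound over the at most $d\cdot 2^{(n+1)d}$ such pairs at level $n$ followed by summation over $n$ yields a summable series once $M$ is large enough. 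Borel--Cantelli then provides a random level $N$ with $\p(N\ge n)\le c_0' c\,e^{-c_1' nM^{\beta}}$, beyond which every neighbouring step bound at every finer level holds simultaneously.

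With these step bounds in hand the chaining itself is routine. Given $t,t+s\in T$ with $|s_i|\le r_i$, set $n_i$ so that $b_i 2^{-n_i}\le |s_i|<b_i 2^{-(n_i-1)}$, approximate both endpoints by dyadic points at an arbitrarily fine level, and interpolate one coordinate at a time. In direction $i$ every step has length $b_i 2^{-n}$ for some $n\ge n_i$, and since $\al_i<1$ the geometric sum $\sum_{n\ge n_i} n^{1/\beta_i}(b_i 2^{-n})^{\al_i}$ is controlled by a constant multiple of $n_i^{1/\beta_i}(b_i 2^{-n_i})^{\al_i}\asymp |s_i|^{\al_i}\log^{1/\beta_i}(b_i/|s_i|)$. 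Summing over $i$ and enlarging the argument of the logarithm to $2b^{\al/\al_i}/|s_i|$ to absorb the ratios $b_i/b$ uniformly yields the claimed modulus, with $C$ an explicit affine function of $M$ and $N$. The main technical obstacle is keeping $c_0$ and $c_1$ independent of $b$ and $c$: the $b$-dependence must funnel entirely into the logarithmic argument rather than into the sub-Weibull tail of $C$, and the $c$-dependence must appear only as a linear prefactor. This is achieved by absorbing $b$ into the grid spacing $b_i 2^{-n}$ throughout the chaining, so that the tail of $C$ is inherited from the tail of $N$ and takes the form $\p(C>m)\le cc_0 e^{-c_1 m^{\beta}}$ with $c_0,c_1$ depending only on the exponents $\al_i,\beta_i$, the dimension $d$, and the constant $a$.
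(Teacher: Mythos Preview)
Your overall strategy is the right one and matches the paper's in spirit: dyadic chaining with a union bound at each scale, and your handling of the tail of $C$ (keeping $c_0,c_1$ independent of $b$ and $c$) is sound. However, there is a genuine gap in the step ``interpolate one coordinate at a time.'' After approximating $t$ and $t+s$ at a fine level $N$, you propose chaining the direction-$i$ displacement using steps of length $b_i2^{-n}$ for $n\ge n_i$. Each such step connects two points whose $i$-th coordinate lies on the level-$n$ grid but whose \emph{remaining} coordinates are fixed at their level-$N$ values. These pairs do not lie on your isotropic level-$n$ grid $G_n$ (which has spacing $b_j2^{-n}$ in every direction), so your union bound over the $d\cdot 2^{(n+1)d}$ neighbouring pairs at level $n$ does not cover them. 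Enlarging the union to all mixed-level grids forces the threshold to grow with $N$, and the chaining estimate then blows up as $N\to\infty$. The alternative of chaining the full approximation $t^{(n)}\to t^{(n-1)}$ on the isotropic grid picks up a direction-$j$ contribution of order $(b_j2^{-n_0})^{\alpha_j}$ at the coarsest level $n_0=\min_i n_i$, which can be much larger than the claimed $|s_j|^{\alpha_j}$ term when $n_j>n_0$ and the $\alpha_i$ differ.

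The paper fixes this by proving the bound for $s$ along a single coordinate direction (then summing via the triangle inequality) and, for the direction-$1$ argument, using an \emph{anisotropic} grid: spacing $2^{-n}$ in direction $1$ but $2^{-nk}$ in each other direction, with $k$ chosen so that $k\alpha_j>\alpha_1$ for all $j\ge 2$. It builds piecewise-multilinear approximations $\scrH_n$ on this grid and controls $\|\scrH_n-\scrH_{n-1}\|$ in sup norm. Because the transverse spacing is so much finer, the direction-$j$ contribution at level $n$ is of order $2^{-nk\alpha_j}\ll 2^{-n\alpha_1}$ and gets absorbed into the direction-$1$ term; the telescoping sum then yields the correct $|s_1|^{\alpha_1}\log^{1/\beta_1}$ modulus with no cross-direction leakage. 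This anisotropic refinement (or an equivalent device) is the missing ingredient in your sketch.
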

	
	Note that the above lemma can also be extended to the case when $\al_i = 1$, but the power of $1/\beta_i$ in the logarithm term changes to a power of $1 + 1/\beta_i$.
	
	
	The proof of Lemma~\ref{L:levy-est} mimics the proof of Levy's modulus of continuity of Brownian motion. The piecewise linear approximations of Brownian motion used in that proof must be replaced by polynomial approximations in each variable. To set up these approximations, we will need an auxiliary lemma.
	
	\begin{lemma}
		\label{L:poly-prop}
		Let $d \in \N$, and let $B =\prod_{i=1}^d [b_i, c_i]$  be any box in $\R^d$ with $b_i < c_i$ for all $i$. Let $a_v$ be real numbers indexed by $v \in \prod_{i=1}^d \{b_i, c_i\}$. There exists a unique polynomial $h:B \to \R$ which is linear in each variable such that $h(v) = a_v$ for all $v \in \prod_{i=1}^d \{b_i, c_i\}$.
	\end{lemma}
	
	\begin{proof}
		Without loss of generality, we may shift the box so that $b_i = 0$ for all $i$.
		For $w \sset \{1, \dots, d\}$, let $x_w = \Pi_{i \in w} x_i$, and define $h(x) = \sum m_w x_w$, where the sum ranges over $w \sset \{1, \dots, d\}$.
		We want to choose the coordinates $m_w$ so that $h(v) = a_v$ for all $v$. This gives a linear system of $2^d$ equations with $2^d$ unknowns $m_w$. Let $A$ be the coefficient matrix of this linear system with rows indexed by $v$'s and columns indexed by $w$'s. An entry $A_{v, w}$ is nonzero exactly when
		$$
		w \sset \{i \in \{ 1, \dots, n\} : v_i = c_i \}.
		$$
		Denoting the right hand side above by $w(v)$, we therefore have $\det A = \pm \prod_v A_{v, w(v)},$ as no other permutations contribute to the determinant. Hence $A$ is invertible, so the system of equations has a unique solution, yielding $h$.
	\end{proof}

	\begin{proof}[Proof of Lemma~\ref{L:levy-est}]
		We first consider the case when $T = \prod_{i=1}^d [0,1]$ and $r_i = 1$ for all $i$.
		
		
		We will prove the bound when $s$ is a multiple of the unit vector $e_i$ for some $i \in \{1, \dots, d\}$. The result then follows by the triangle inequality. For ease of notation, we set $i = 1$.  Let $k \in \N$ be large enough so that $k\al_j  > \al_1$ for all $j \in \{2, \dots, d\}$.
		For $n \in \N$, let
		$$
		P_n^1 =  \{0, 1/2^n, 2/2^n, 3/2^n,\ldots, 1\} \quad \mathand \quad P_n^j =  \{0, 1/2^{nk}, 2/2^{nk}, \ldots, 1\}
		$$
		for $j \in \{2, \dots, d\}$, and set $P_n = \prod_{i=1}^d P_n^i$. Define the set of translated boxes
		$$
		C_n = \lf\{S = v + [0,1/2^n] \X \prod_{j=2}^d [0, 1/2^{nk}] : v \in P_n, S \sset T \rg\}.
		$$
		For $n \in \{0, 1, \dots\}$, define approximations $\scrH_n$ of $\scrH$, by setting $\scrH_n=\scrH$ on $P_n$, and by requiring that $\scrH_n$ is linear in each variable on every box $D\in C_n$. For a fixed box $D$, such an approximation exists and is unique by Lemma~\ref{L:poly-prop}. Gluing these approximations together yields $\scrH$ (note that the approximations agree where the boxes overlap by the uniqueness in Lemma~\ref{L:poly-prop}). Observe that $\scrH_n\to \scrH$ uniformly on $T$. We also set $\scrH_{-1}$ to be constantly equal to $\scrH(0, \dots, 0)$.
		
		
		Now let $t, t +s \in T$ be vectors that differ only on the first coordinate. Setting $\scrG_n = \scrH_n - \scrH_{n-1}$, we can write $\scrH(t +s) - \scrH(t) = \sum_{n=0}^\infty \scrG_n(t+s) - \scrG_n(t)$. Letting $||\cdot||$ denote the uniform norm on functions, by linearity of $\scrH_n$ in the first coordinate on every box $S\in C_n$ we have $\|\partial_1 \scrG_n\| \le 2^n \|\scrG_n\|$. Therefore since $s$ only has a nonzero first coordinate, by the mean value theorem, we have
		\begin{equation}
		\label{E:G-deriv}
		|\scrG_n(t + s) - \scrG_n(t)| \le |s| 2^n \|\scrG_n\|.
		\end{equation}
		Additionally, we have the bound
		\begin{equation}
		\label{E:G-tri}
		|\scrG_n(t + s) - \scrG_n(t)| \le |\scrG_n(t + s)| + |\scrG_n(t)| \le 2\|\scrG_n\|.
		\end{equation}
		Hence to estimate $\scrH(t+s) - \scrH(t)$, we need a good bound on $\|\scrG_n\|$.
		
		
		For $n \ge 1$ and a given box $S \in C_{n-1}$, the values of $\scrH_{n-1}$ on $S$ are convex combinations of the values of $\scrH$ on the vertices of $S$. Also, the values of
		$\scrH_{n}$ on $S$ are convex combinations of the values of $\scrH$ on $S\cap P_n$ for any $n \ge 1$. In particular, this implies that for $n \ge 1$,
		\begin{align*}
		\max_{t \in S} |\scrH_{n-1}(t)-\scrH_{n}(t)| &\le \max_{t + s, t\in S\cap P_n} |\scrH(t + s)-\scrH(t)| \\
		&\le \max_{t + s, t\in P_n, |s_1| \le 2^{-(n -1)}, |s_j| \le 2^{-k(n - 1)}} |\scrH(t + s)-\scrH(t)|.
		\end{align*}
		Here the final inequality follows since $S$ is a translate of the box $[0,1/2^{n-1}] \X \prod_{j=2}^d [0, 1/2^{(n-1)k}]$.
		Therefore for $n \ge 1$,
		\begin{equation}
		\label{E:Gng}
		||\scrG_n|| \le \max \{|\scrH(t + s) - \scrH(t)| : t, t + s \in P_n, |s_1| \le 2^{-(n -1)}, |s_j| \le 2^{-k(n - 1)}\}.
		\end{equation}
		Because of how we defined $\scrH_{-1}$, the above bound also holds for $\scrG_0$. Now, for any $t, t + s \in P_n$ with $|s_1| \le 2^{-(n -1)}, |s_j| \le 2^{-k(n - 1)}$, the lattice structure of $P_n$ implies that
		we can write 
		$$
		s = a_1 2^{-n} e_1 + \dots + a_k 2^{-nk} e_k
		$$
		where the $e_i$ are coordinate vectors, $a_1 \in \{0, 1, 2\}$ and $a_i \in \{0, 1, \dots, 2^k\}$ for $i > 1$. Moreover, for any 
		$$
		s' = b_1 2^{-n} e_1 + \dots + b_k 2^{-nk} e_k
		$$
		with $b_i \in \{0, \dots, a_i\}$ for all $i$, we have $t + s' \in P_n$ as well. Therefore by \eqref{E:Gng} and the triangle inequality, for $n \ge 0$ we have
		\begin{equation*}
		\|\scrG_n\|\le N_{n,1}+ \cdots +N_{n,d},
		\end{equation*}
		where
		\begin{align*}
		N_{n,1} &= 2\max \{|\scrH(t + s)-\scrH(t)| : t + s, t \in P_n, |s_1| = 2^{-n}, s_j = 0, j \ge 2 \} \quad \mathand \\
		N_{n,j}&=2^{k} \max \{|\scrH(t+s)-\scrH(t)| : t + s, t \in P_n, |s_j| = 2^{-nk}, s_i = 0, i \ne j \}.
		\end{align*}
		By~\eqref{E:tail-bd} and a union bound over $|C_n|$ boxes, for all $m > 0$ we have that
		\begin{align}
		\p\lf(N_{n,1} > 2^{1-n\al_1} m((n+1)kd)^{1/\beta_1}\rg) &\le ce^{(n + 1)kd (\log 2 - a m^{\beta_1})} \quad \mathand \\
		\p\lf(N_{n,i} > 2^{k-nk\al_i} m((n+1)kd)^{1/\beta_i}\rg) &\le ce^{(n + 1)kd (\log 2 - a m^{\beta_i})}
		\end{align}
		for $i \in \{2, \dots, d\}$. Here we have used that $|C_n| \le 2^{(n+1)kd}$.
		Set
		$$
		D= \sup_{n \ge 0} \frac{||\scrG_n|| 2^{n\al_1}}{(n+1)^{1/\beta_1}}.
		$$
		A tail bound on $D$ can be obtained via the above bounds on $N_{n, j}$. Using that $k\al_j > \al_1$ for $j \ne 1$, we can conclude that
		$$
		\p(D \ge m) \le c c_0 e^{-c_1m^{\beta}}
		$$
		where $\beta = \min_i \beta_i$, and $c_0$ and $c_1$ are constants that depends on the terms $\al_i, \beta_i, a,$ and $d$.
		Now by using the bounds in~\eqref{E:G-deriv} and~\eqref{E:G-tri}, for every $t + s, t \in T$ which differ only on the first coordinate, we have
		\begin{align*}
		|\scrH(t + s) - \scrH(t)| &\le \sum_{n = 0}^\infty D (n+1)^{1/\beta_1} 2^{-n \al_1} (|s|2^n\wedge 2) \\
		&\le c_2 D |s|^{\al_1} \log^{1/\beta_1} \lf(\frac{2}{|s|} \rg)
		\end{align*}
		for a constant $c_2$ that depends on the terms $\al_1$ and $\beta_1$. This completes the proof in the case when $T = [0, 1]^d$ and $r_i = 1$ for all $i$.
		
		
		For general lengths $b_i$ and $r_i = b_i$, by translation we can assume that $T = \prod_{i=1}^d [0, b_i]$. Let $\al = \max_i \al_i$ and $b = \max_i b_i^{\alpha_i}$, and define the process
		$$
		U(t_1, \dots, t_d) = b^{-1} \mathcal H(b^{1/\alpha_1} t_1, b^{1/\alpha_2} t_2 \dots, b^{1/\alpha_d} t_d).
		$$
		The process $U$ satisfies the same tail bound~\eqref{E:tail-bd} as $\scrH$ on the subset $S \sset [0, 1]^d$ where it is defined. We can extend $U$ to a function satisfying~\eqref{E:tail-bd} on all of $[0, 1]^d$ by letting $P:[0, 1]^d \to S$ be the projection map sending $x$ to the closest point to $x$ in $S$, and letting $U(x) = U(P(x))$. This extension of $U$ satisfies the conclusion of the lemma by the $r_i = b_i = 1$ case, giving the desired bound on $\scrH$ on $T$.
		
		For the most general case when $r_i < b_i$ for some $i$, we can get individual modulus of continuity bounds on boxes with side lengths bounded above by $r_i$ and whose union is $T$. We require at most $2^d \prod b_i/r_i$ many boxes. A union bounds then yields the lemma at the expense of the $\prod b_i/r_i$ factor in the bound on $C$.
	\end{proof}
	Lemmas~\ref{L:bridge-disc} and~\ref{L:levy-est} now immediately imply the following.
	\begin{prop}
		\label{P:bridge-bd}
		There exist universal constants $c,d > 0$ so that the following holds. Let $k\in \N$ and let $B_i$, $i\in\{1,\ldots k\}$ be independent Brownian bridges with arbitrary start and end points and slope $b_i$ on some time interval $[t_1, t_2]$, conditioned not to intersect. There are random constants $C_i$ satisfying
		$$
		\p(C_i > m) \le e^{ck -dm^2}
		$$
		for any $m > 0$, so that for any $i$, $t>0$, $s,s+t \in [t_1, t_2]$  we have
		$$
		|B_i(s+t)-B_i(s)-b_it|\le C_i \sqrt{t}\log^{1/2}\lf(\frac{2\;|t_2 - t_1|}t\rg).
		$$
	\end{prop}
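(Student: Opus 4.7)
The plan is to apply Lemma \ref{L:levy-est} to the normalized process $\scrH_i(s) := (B_i(s) - b_i s)/\sqrt{k}$ on $T = [a, b]$. The increment tail bound from Lemma \ref{L:bridge-disc} rewrites exactly as
$$
\p(|\scrH_i(s + r) - \scrH_i(s)| \ge m\, r^{1/2}) = \p(|B_i(s+r) - B_i(s) - b_i r| \ge m\sqrt{k r}) \le c\, e^{-d m^2}
$$
for every $s, s + r \in [a, b]$ and $m > 0$, which is precisely the hypothesis \eqref{E:tail-bd} of Lemma \ref{L:levy-est} with $d = 1$, $\al_1 = 1/2$, $\beta_1 = 2$, and $r_1 = b - a$.

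Invoking Lemma \ref{L:levy-est} then yields a random constant $C_i$ with $\p(C_i > m) \le c_0 e^{-c_1 m^2}$, where $c_0$ and $c_1$ are universal by the explicit statement of the lemma: they depend only on $\al_1, \beta_1, d$ and the decay constant in the tail assumption, \emph{not} on the length of the interval or on any other parameter of the bridge ensemble (slopes, endpoints, or $k$). The output modulus estimate reads
$$
|\scrH_i(s + t) - \scrH_i(s)| \le C_i\, t^{1/2} \log^{1/2}\bigl(2(b-a)/t\bigr)
$$
for all $s, s+t \in [a,b]$, and multiplying through by $\sqrt{k}$ yields the stated bound up to comparing $\log(2(b-a)/t)$ with $\log(1 + t^{-1})$.

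The only step requiring any real care is this last comparison of logarithms. For $t$ small relative to $b - a$ the two agree up to additive constants which can be absorbed into $C_i$; for $t$ comparable to $b - a$ one exploits the bridge constraint $B_i(b) - B_i(a) = b_i(b-a)$, which forces the endpoint-to-endpoint increment to vanish, so a direct application of Lemma \ref{L:bridge-disc} at the mirrored scale $t \wedge ((b-a) - t)$ closes the gap and is compatible with the weaker logarithmic factor $\log(1 + t^{-1})$. Everything else is a mechanical combination of the two preceding lemmas, so the logarithmic reconciliation is the only nontrivial point.
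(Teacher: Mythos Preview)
Your approach is exactly the paper's: the proof there reads in its entirety ``Lemmas \ref{L:bridge-disc} and \ref{L:levy-est} now immediately imply the following,'' and your normalization $\scrH_i = (B_i - b_i\,\cdot)/\sqrt{k}$ with $\al_1 = 1/2$, $\beta_1 = 2$ is the right way to make that sentence precise.

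You are also right to flag the logarithm comparison as the one point needing attention, since Lemma \ref{L:levy-est} delivers $\log^{1/2}\bigl(2(b-a)/t\bigr)$ rather than $\log^{1/2}(1+t^{-1})$. However, your proposed fix does not close the gap in full generality. The mirroring trick via the bridge constraint handles $t$ close to $b-a$, and the additive-constant absorption handles $t$ small, but for \emph{intermediate} $t$ on a long interval --- say $b-a = N$ large and $t = N/2$ --- neither applies: the centred increment has standard deviation of order $\sqrt{kN}$, while $\sqrt{kt\log(1+t^{-1})} \approx \sqrt{k}$, so no $C_i$ with universal Gaussian tails can absorb the discrepancy. (Already for $k=1$, a standard Brownian bridge on $[0,N]$ has $|B(N/2)|$ of order $\sqrt{N}$.)

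In other words, the statement as written needs either $b-a$ bounded or the logarithm replaced by $\log\bigl(2(b-a)/t\bigr)$; every application in the paper is on intervals of length at most $1$ (indeed, of length $O(k^{-2/3-\ga})$), so the issue is moot there. The cleanest resolution is to note that by Brownian scaling one may take $[a,b]=[0,1]$, in which case $\log(2/t) \le c\log(1+t^{-1})$ for all $t\in(0,1]$ and the comparison is genuinely trivial.
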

	
	\begin{proof}
		Fix $i$, and consider the process
		$$
		\tilde B_i(t) = B_i(a + t) -B_i(a) - b_i t
		$$
		By Lemma~\ref{L:bridge-disc}, the process $\tilde B_i$ satisfies
		the assumptions of Lemma~\ref{L:levy-est} in dimension $1$ with  $\al_1 = 1/2, \beta_1 = 2, b_1 = r_1 = t_2 - t_1.$ The only dependence on $k$ in Lemma~\ref{L:bridge-disc} is in the constant $e^{ck}$. This translates to an $e^{ck}$ factor in the tail bound on $C_i$ (for a possibly different $c$) and has no other effect.
	\end{proof}
	
	\section{Modulus of continuity for top Dyson lines}\label{S:mod-Dyson}
	
	Before proceeding with the main goal of the paper, which is to understand the Airy line ensemble, we give a short proof of tightness of the Dyson lines in the scaling~\eqref{E:rescaled-dyson}. The ideas in this proof will be used later in the paper to give a similar modulus of continuity result for parabolic Airy lines. As discussed in the introduction, Proposition~\ref{P:dyson-tails} is a restatement of Proposition~\ref{P:Dyson-tails-i} and was previously obtained in Hammond \cite{hammond2016brownian}, Theorem 2.14.
	
	\begin{prop}
		\label{P:dyson-tails}
		Fix $k \in \N$ and $c > 0$. There exist constants $c_k, d_k > 0$ such that for every $n \in \N$, $t > 0, \;s \in (0, ctn^{-1/3}]$, and $m > 0$ we have
		$$
		\p\bigg(\lf| W_k^n(t) - W_k^n(t +s) - \frac{s\sqrt{n}}{\sqrt{t}}\rg| > m \sqrt{s} \bigg) \le c_ke^{-d_km^{3/2}}.
		$$
	\end{prop}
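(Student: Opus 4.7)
The plan is to combine the Brownian Gibbs property of Dyson's Brownian motion with Lemma \ref{L:bridge-disc} and the edge eigenvalue tail bound of Theorem \ref{T:top-bd}. The increment $W^n_k(t+s) - W^n_k(t)$ has first-order drift $s\sqrt{n/t}$ (since $\tfrac{d}{dt} 2\sqrt{nt} = \sqrt{n/t}$), and I aim to bound its fluctuation around this drift at scale $\sqrt{s}$. Fix a resampling window $s' := (c \vee 1)\, t n^{-1/3}$, so that $s \le s'$ and $s' \asymp t n^{-1/3}$. By the Brownian Gibbs property applied to the top $k$ lines on $[t, t+s']$, conditional on $\{W^n_j(t), W^n_j(t+s')\}_{j \le k}$ and on $W^n_{k+1}|_{[t, t+s']}$, the top $k$ lines are distributed as $k$ non-intersecting Brownian bridges staying above $W^n_{k+1}$. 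Writing $b_k := (W^n_k(t+s') - W^n_k(t))/s'$ for the slope of the $k$-th bridge, I would apply a version of Lemma \ref{L:bridge-disc} (suitably extended to allow the lower barrier) and average over the conditioning to get
\[
\prob\bigl( |W^n_k(t+s) - W^n_k(t) - s\, b_k| \ge m\sqrt{k s} \bigr) \le c\, e^{-d m^2}.
\]

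To replace $s\, b_k$ by $s\sqrt{n/t}$, I use Brownian scaling $W^n_k(u) \eqd \sqrt{u}\, W^n_k(1)$ together with Theorem \ref{T:top-bd}: for each $u \in \{t, t+s'\}$, $|W^n_k(u) - 2\sqrt{nu}| \le m_1 \sqrt{u}\, n^{-1/6}$ with probability at least $1 - c_k e^{-d_k m_1^{3/2}}$. A Taylor expansion gives $2\sqrt{n(t+s')} - 2\sqrt{nt} = s'\sqrt{n/t} + O\bigl(\sqrt{n}\, (s')^2/t^{3/2}\bigr)$, with the deterministic correction of order $\sqrt{t}\, n^{-1/6}$ for $s' \asymp t n^{-1/3}$. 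Assembling these, dividing by $s'$, multiplying by $s$, and using $s\, n^{1/6}/\sqrt{t} \le \sqrt{c \vee 1}\, \sqrt{s}$ for $s \le c\, t n^{-1/3}$, one gets $|s\, b_k - s\sqrt{n/t}| = O_{k,c}(m_1)\, \sqrt{s}$ on the same event.

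A triangle inequality and a union bound, with $m \asymp M/\sqrt{k}$ and $m_1 \asymp M$ chosen so that the two error sources combine to at most $M\sqrt{s}$, then yield
\[
\prob\bigl( |W^n_k(t+s) - W^n_k(t) - s\sqrt{n/t}| > M\sqrt{s} \bigr) \le c\, e^{-d M^2/k} + c_k\, e^{-d_k M^{3/2}}.
\]
Since $e^{-M^2/k} \le e^{-M^{3/2}/k}$ for $M \ge 1$ and small $M$ can be absorbed into the prefactor, this bound is $\le c_k' e^{-d_k' M^{3/2}}$, as required.

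The main obstacle is adapting Lemma \ref{L:bridge-disc} to the case of a lower barrier $W^n_{k+1}$, since the lemma as stated is for non-intersecting Brownian bridges without a lower boundary. I expect the monotonicity-based proof of that lemma to extend: after conditioning on the bridges at an intermediate time, one applies Lemma \ref{L:monotone-gibbs} to raise the endpoints of the lower bridges up to those of $W^n_k$, which dominates $W^n_k$ by the top of a smaller ensemble with common endpoints and the same lower barrier. Controlling the top of such a constrained melon can be done either by extending Theorem \ref{T:top-bd} to Dyson-type ensembles with a lower barrier, or by restricting to the typical event on which $W^n_{k+1}$ is sufficiently below $W^n_k$ throughout $[t, t+s']$, where the constrained and free bridge measures have bounded Radon--Nikodym derivative.
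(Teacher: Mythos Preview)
Your overall strategy—resample the top $k$ lines on a short window via the Brownian Gibbs property, control the bridge fluctuation by Lemma~\ref{L:bridge-disc}, and control the slope $b_k$ by Theorem~\ref{T:top-bd}—is natural, and the bookkeeping you outline for the slope replacement and the final union bound is fine. The problem is exactly the obstacle you flag at the end, and neither of your proposed fixes closes it with the tools available.

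The monotonicity argument behind Lemma~\ref{L:bridge-disc} is intrinsically one-sided. After conditioning at an intermediate time and pushing the other endpoints around, you can dominate $W^n_k$ from above only by the top of a smaller melon \emph{still constrained to stay above the barrier $W^n_{k+1}$}; sending the barrier to $-\infty$ via Lemma~\ref{L:monotone-gibbs} lowers the bridges, so it yields a lower bound on $W^n_k$, not an upper one. Hence bounding the constrained-melon top would genuinely require an extension of Theorem~\ref{T:top-bd} to ensembles with a floor, which is not proved here. Your Radon--Nikodym alternative needs $\sup_{[t,t+s']} W^n_{k+1}$ to sit well below the free $k$-th bridge with high probability; that is a uniform-in-time statement about $W^n_{k+1}$, i.e.\ precisely the modulus-of-continuity control the proposition is meant to provide, so the argument is circular. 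Resampling all $n$ lines instead removes the barrier but replaces $\sqrt{k}$ by $\sqrt{n}$ in Lemma~\ref{L:bridge-disc}, which is fatal.

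The paper avoids the barrier entirely by first invoking the time-inversion identity
\[
W^n_k(p)-W^n_k(q)\ \eqd\ \sqrt{p/q}\,W^n_k(q)-\sqrt{q/p}\,W^n_k(p),
\]
which, after a Taylor expansion, shows that the two tails of $W^n_k(1)-W^n_k(1+s)+s\sqrt{n}$ coincide up to an error $\phi_n(s)$ controlled directly by Theorem~\ref{T:top-bd}. This reduces everything to the single upper tail $\p\big(W^n_k(1)-W^n_k(1+s)+s\sqrt{n}>\tfrac{m}{2}\sqrt{s}\big)$, for which one only needs a \emph{lower} bound on $W^n_k(1+s)$ given the endpoints. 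Now monotonicity works cleanly: send the lower boundary to $-\infty$ and the top $k-1$ endpoints down to those of $W^n_k$, giving $W^n_k \ge L+B$ with $B$ the bottom line of a free $k$-melon, to which Lemma~\ref{L:bridge-disc} applies. A second, smaller difference is that the paper takes the resampling window $r=m/(2\sqrt{sn})$ to depend on $m$; this makes the required endpoint deviation at times $1$ and $1+r$ of order $m^2 n^{-1/6}$, so Theorem~\ref{T:top-bd} already gives an $e^{-d_k m^3}$ bound there.
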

	
	
	Throughout the proof, $c_k$ and $d_k$ will be constants that depend only on $k$ and $c$, but may change from line to line.
	\begin{proof} We can assume that $m > 4c^{3/2}$ by possibly changing $c_k$ and $d_k$.
		By Brownian scaling, it suffices to prove the lemma for $t = 1$. Now, by time inversion and Brownian scaling, for any times $p, q \in \R$ we have that
		\begin{equation}
		\label{E:Wnk-p}
		W^n_k(p) - W^n_k(q) \eqd \sqrt{\frac{p}{q}} W^n_k(q) - \sqrt{\frac{q}{p}} W^n_k(p).
		\end{equation}
		To apply this property, we first define the error $C^n_k(s)$ by
		$$
		W^n_k(1 + s) = \lf(2 + s - \frac{s^2}4 \rg)\sqrt{n} + n^{-1/6}C^n_k(s).
		$$
		Note that $(2 + s - s^2/4)\sqrt{n}$ is the second order Taylor expansion of $2\sqrt{1 + s}\sqrt{n}$, so for small $s$ it is a good approximation of $W^n_k(1 + s)$. In particular, by Theorem~\ref{T:top-bd} and Brownian scaling, the error term $C^n_k$ satisfies the tail bound
		\begin{equation}
		\label{E:Cnk-tail}
		\p(|C^n_k(s)| \ge r) \le c_k e^{-d_k r^{3/2}}
		\end{equation}
		for all $s \in [0, ctn^{-1/3}]$. Using the property~\eqref{E:Wnk-p} applied to the difference
		$
		W^n_k(1) - W^n_k(1 + s)
		$
		and Taylor expanding the resulting $\sqrt{1 + s}$ and $1/\sqrt{1 + s}$ terms gives that
		\begin{equation}
		\label{E:Wnk-phi}
		W^n_k(1) - W^n_k(1 + s) + s\sqrt{n} \eqd W^n_k(1 + s) - s\sqrt{n} - W^n_k(1) - \phi_n(s),
		\end{equation}
		where $\phi_n(s)$ is a random error term that satisfies
		\begin{equation}
		\label{E:phii}
		|\phi_n(s)| \le d\sqrt{n}|s|(s^2 + n^{-2/3}|C_k^n(0)| + n^{-2/3}|C_k^n(s)|)
		\end{equation}
		for a constant $d$ that depends on the width $c$ of the interval but not on $n$, or the point $s$. By using~\eqref{E:Wnk-phi}, we can write
		\begin{align*}
		\p\bigg(&\lf| W^n_k(1) - W^n_k(1 + s) + s\sqrt{n}\rg| > m \sqrt{s} \bigg)
		\\
		&\le 2\p\lf(W^n_k(1) - W^n_k(1 + s) + s\sqrt{n} > \frac{m}2 \sqrt{s} \rg) + \p\lf(\phi_n(s) > \frac{m}2 \sqrt{s} \rg).
		\end{align*}
		By the bound on $\phi_n$, and the bound in~\eqref{E:Cnk-tail}, we have that
		\begin{align*}
		\p\lf(\phi_n(s) > \frac{m}2 \sqrt{s} \rg)
		\le c_k e^{-d_km^{3/2}}.
		\end{align*}
		It remains to bound
		\begin{equation}
		\label{E:P-cc}
		\p\lf(W^n_k(1) - W^n_k(1 + s) + s\sqrt{n} > \frac{m}2 \sqrt{s} \rg).
		\end{equation}
		The method we use here will be applied again in Lemma~\ref{L:airy-tails}. Set
		$$
		r = \frac{m}{2\sqrt{sn}},
		$$
		and let $L$ be the line with $L(1) = W_k^n(1)$ and $L(1 + r) = W_k^n(1 + r)$. Note that our assumption that $m > 4c^{3/2}$ implies that $r > s$. The Brownian Gibbs property for Dyson's Brownian motion and monotonicity (Lemma~\ref{L:monotone-gibbs}) implies that in the interval $[1, 1 + r]$, the line $W_k^n$ stochastically dominates $L + B$, where $B: [1, 1+ r] \to \R$ is the bottom line of a Brownian $k$-melon on $[1, 1 + r]$ that is independent of $W^n_k(1)$ and $W^n_k(1 +r)$ (this is what we get after moving the bottom boundary to $-\infty$ and the top $k-1$ boundary points to $L(1)$ and $L(1+r)$). Hence~\eqref{E:P-cc} is bounded above by
		\begin{align}
		\nonumber
		\p&\lf([L+B](1) - [L + B](1 + s) + s\sqrt{n} > \frac{m\sqrt{s}}2  \rg) \\
		\label{E:L-chhk}
		&\le \p\lf(L(1) - L(1 + s) + s\sqrt{n}> \frac{m\sqrt{s}}{4} \rg) + \p\lf(B(1 + s) < -\frac{m\sqrt{s}}4 \rg).
		\end{align}
		By Lemma~\ref{L:bridge-disc}, the second term in \eqref{E:L-chhk} is bounded above by $c_ke^{-d_km^2}$ for constants $c_k$ and $d_k$. For the first term, since $L$ is linear and equal to $W^n_k$ at $1$ and $1 + r$ we can write
		$$
		L(1) - L(1 + s) + s \sqrt{n} = \frac{s}{r} \lf(W^n_k(1) - W^n_k(1+ r) + r\sqrt{n}\rg).
		$$
		Therefore the first term in \eqref{E:L-chhk} is equal to
		$$
		\p\lf(W_k^n(1) - W_k^n(1 + r) + r\sqrt{n}> \frac{mr}{4\sqrt{s}} \rg).
		$$
		By a union bound, this is bounded above by
		\begin{align}
		\nonumber
		&\p\lf(W_k^n(1) > 2\sqrt{n} + \frac{mr}{16\sqrt{s}} \rg) + \p\lf(-W_k^n(1 + r) + r\sqrt{n} > -2\sqrt{n} + \frac{3mr}{16\sqrt{s}}\rg) \\ = \; &\p\lf(W_k^n(1) > 2\sqrt{n} + \frac{mr}{16\sqrt{s}} \rg) + \p\lf(W_k^n(1 + r) <\lf(2 + r -\frac{r^2}4\rg)\sqrt{n} - \frac{mr}{16\sqrt{s}} \rg).
		\label{E:splat}\end{align}
		In the second above we have used that $\sqrt{n}r^2/4 = mr/(8\sqrt{s})$. Now, since $s$ is bounded above by $cn^{-1/3}$, we have that
		$$
		\frac{m r}{16\sqrt{s}} \ge \frac{m^2}{32c} n^{-1/6}.
		$$
		Hence the first probability in~\eqref{E:splat} is bounded above by $c_k e^{-d_km^3}$ by the tail bounds on Dyson's Brownian motion established in Theorem~\ref{T:top-bd}. By Brownian scaling, the second term in \eqref{E:splat} is equal to
		\begin{equation*}
		\p\lf(\sqrt{1 + r} W_k^n(1) < \lf(2 + r -\frac{r^2}4\rg)\sqrt{n} - \frac{mr}{16\sqrt{s}} \rg).
		\end{equation*}
		Since $2\sqrt{1 + r} \ge 2 + r - r^2/4,$ this is bounded above by
		\begin{equation}
		\label{E:p-tobd}
		\p\lf(W_k^n(1)< 2\sqrt{n} - mn^{-1/6} \frac{r n^{1/6}}{16\sqrt{s}\sqrt{1 + r}} \rg).	
		\end{equation}
		Now, using that $r = m/(2 \sqrt{s n})$, we have the bound
		\begin{equation}
		\label{E:rnn}
		\frac{r n^{1/6}}{16\sqrt{s(1 + r)}} \ge \frac{r n^{1/6}}{16\sqrt{s}(1 + r/2)} = \frac{m n^{1/6}}{32 s\sqrt{n} + 8 m\sqrt{s}}.
		\end{equation}
		Using that $s \le cn^{-1/3}$, there exists a constant $c'>0$ such that the right hand side of \eqref{E:rnn} is bounded below by
		$$
		\frac{c' m n^{1/6}}{n^{1/6} + m} \ge \frac{c'}2 (m \wedge n^{1/6}).
		$$
		Therefore \eqref{E:p-tobd} is bounded above by
		$$
		\p\lf(W_k^n(1) -2\sqrt{n} < - \frac{c'}2 mn^{-1/6} (m \wedge n^{1/6}) \rg).
		$$
		This is bounded above by $c_k e^{-d_km^{2}}$ for all $m$ by Theorem~\ref{T:top-bd}, giving that~\eqref{E:splat} is also bounded above by
		$
		c_k e^{-d_km^{2}}
		$
		for any $s \in [0, cn^{-1/3}]$. This in turn bounds~\eqref{E:P-cc}, completing the proof.
	\end{proof}

	\begin{proof}[Proof of Theorem~\ref{T:CH-i}]
		Tightness of the finite-dimensional distributions of $\scrC^n$ is well-known. It follows from tightness of the distribution of $\scrC^n(t)$ for every fixed $t$. This can be obtained, for example, from Theorem~\ref{T:top-bd} and Brownian scaling. Tightness of the whole ensemble $\scrC^n$ then follows from Proposition~\ref{P:dyson-tails} and the Kolmogorov-Centsov theorem (\citep{kallenberg2006foundations}, Corollary 16.9).
	\end{proof}
	
	We end this section by showing how Proposition~\ref{P:dyson-tails} can be used in conjunction with Theorem~\ref{T:top-bd} and Lemma~\ref{L:levy-est} to prove a type of `law of the iterated logarithm' for the top line in Dyson's Brownian motion. Beyond being an interesting consequence of Proposition~\ref{P:dyson-tails}, this result is necessary for the construction of the directed landscape in~\cite{DOV}.
	
	
	To prove this law, we first state a simple corollary of Proposition~\ref{P:dyson-tails} and Lemma~\ref{L:levy-est}.
	
	\begin{corollary}
		\label{C:mod-cont}
		For every $k$, there exist constants $c_k$ and $d_k$ such that for all $t > 0, m > 0$ and all $n \ge 1$ we have
		\begin{equation}
		\label{E:probprob}
		\prob\bigg(\max_{s \in [0, tn^{-1/3}]} \lf|W^n_k(t+ s) - W^n_k(t) - \frac{s \sqrt{n}}{\sqrt{t}} \rg| \ge m \sqrt{t} n^{-1/6} \bigg) \le c_k e^{-d_k m^{3/2}}.
		\end{equation}
		
	\end{corollary}
	
	\begin{proof}
		Proposition~\ref{P:dyson-tails} allows us to apply  Lemma~\ref{L:levy-est} with $d =1, \al_1 = 1/2, \beta_1 = 3/2, r_1 = b_1 = tn^{-1/3}$ to the process $W^n_k$ on the interval $[0, tn^{-1/3}]$. This gives that there exists a random constant $C > 0$ with $\p(C > m) \le c_k e^{-d_k m^{3/2}}$ for all $m$ such that
		$$
		\lf|W^n_k(t+ s) - W^n_k(t) - \frac{s \sqrt{n}}{\sqrt{t}} \rg| \le C\sqrt{s} \log^{2/3} \lf( \frac{2 tn^{-1/3}}{s}\rg)
		$$
		for all $s \in [0, tn^{-1/3}]$.
		Using that $\log^{2/3}(y) \le \sqrt{y/2}$ for all $y > 0$, we get that the right hand side above is bounded above by $C \sqrt{t}n^{-1/6}$ for all $s \in [0, tn^{-1/3}]$. The bound in \eqref{E:probprob} follows since $\p(C > m) \le c_k e^{-d_k m^{3/2}}$.
	\end{proof}
	
	\begin{prop}
		\label{P:cross-prob}
		There exist positive constants $b, c,$ and $d$ such that for all $m > 0$ and $n \ge 1$, the probability that
		\begin{align*}
		W^n_1(t) \le 2 \sqrt{nt} + \sqrt{t} n^{-1/6}[m + b\log^{2/3}(n^{1/3} \log(t \;\vee \;t^{-1}) + 1)] \;\; \forall t \in [0, \infty)
		\end{align*}
		is greater than or equal to $1- c e^{-d m^{3/2}}$.
		Here $x \vee y = \max(x,y)$.
	\end{prop}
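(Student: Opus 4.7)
The plan is a chaining argument in three stages: reduce to $t \ge 1$ via time inversion, handle the base interval $[1, 2]$ via an Airy-scale decomposition with a variable threshold, then transfer to outer scales via Brownian scaling on dyadic intervals.

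\textbf{Reduction to $t \ge 1$.} Dyson's Brownian motion inherits from Hermitian Brownian motion the time-inversion identity $(tW^n(1/t))_{t > 0} \eqd (W^n(t))_{t > 0}$: if $H(t)$ is a Hermitian Brownian motion then so is $tH(1/t)$, and eigenvalues scale linearly. Substituting $s = 1/t$ in the stated event converts the failure set on $(0, 1]$ into a failure set of identical form on $[1, \infty)$, so the overall probability at most doubles.

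\textbf{Base interval $[1, 2]$.} This is the heart of the argument. Decompose $[1, 2] = \{1\} \cup \bigcup_{k=0}^{K} A_k$ with $A_k = [1 + 2^k n^{-1/3}, 1 + 2^{k+1}n^{-1/3}] \cap [1, 2]$ and $K = \lceil \log_2 n^{1/3}\rceil$. At $t = 1$ the threshold reduces to $n^{-1/6}m$ and Theorem \ref{T:top-bd} applies directly. On $A_k$, the inequality $\log t \ge c \cdot 2^k n^{-1/3}$ bounds the claimed threshold below by $n^{-1/6}(m + b' k^{2/3})$ for a constant $b' \propto b$. I grid $A_k$ with $\sim 2^k$ Airy-spaced points: at each point, Brownian scaling plus Theorem \ref{T:top-bd} controls the pointwise fluctuation; between consecutive points, Corollary \ref{C:mod-cont} controls the oscillation, noting that the Taylor expansion $2\sqrt{n(t_i+u)} - 2\sqrt{n t_i} = u\sqrt{n/t_i} + O(u^2 \sqrt{n})$ on an interval of length $u \le n^{-1/3}$ matches the Corollary's drift up to an $O(n^{-1/6})$ error absorbable into the threshold. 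A union bound over the $2^k$ points, combined with superadditivity $(a+b)^{3/2} \ge a^{3/2} + b^{3/2}$, gives contribution $c \cdot 2^k \exp(-dm^{3/2}) \exp(-d(b')^{3/2} k)$ on $A_k$, summable in $k$ once $b$ is large enough that $d(b')^{3/2} > \log 2$.

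\textbf{Outer dyadic scales.} For $j \ge 1$ set $I_j = [2^j, 2^{j+1}]$. By Brownian scaling $W^n(2^j s) \eqd 2^{j/2} W^n(s)$, the failure event on $I_j$ has the same probability as
\[
\Big\{\exists s \in [1, 2]:\ \tfrac{W^n_1(s) - 2\sqrt{ns}}{\sqrt{s}} > n^{-1/6}\big[m + b\log^{2/3}(n^{1/3}(j\log 2 + \log s) + 1)\big]\Big\},
\]
which is contained in the constant-threshold event with $m_j := m + b\log^{2/3}(n^{1/3} j \log 2 + 1)$. The same grid argument (without variable-threshold refinement) bounds this probability by $c n^{1/3}\exp(-d m_j^{3/2})$. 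Superadditivity yields $\le c n^{1/3}\exp(-dm^{3/2})(n^{1/3} j + 1)^{-db^{3/2}}$, and the sum $n^{1/3}\sum_{j \ge 1}(n^{1/3}j+1)^{-db^{3/2}}$ is uniformly bounded in $n$ once $b$ is chosen with $db^{3/2} > 1$.

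The main technical obstacle is the Airy-scale decomposition on $[1, 2]$: the variable claimed threshold matches the local fluctuation scale so tightly near $t = 1$ that any attempt to use a constant threshold there picks up an unwanted $n^{1/3}$ factor from the Airy-grid union bound. Choosing $b$ large enough to balance the $2^k$-sized union bound against the $\exp(-d(b')^{3/2}k)$ gain (and separately to make the outer sum over $j$ summable) is what drives the whole argument.
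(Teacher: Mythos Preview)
Your proof is correct but takes a substantially more elaborate route than the paper's. The paper uses a single geometric grid $a_i = (1+n^{-1/3})^i$ indexed by $i \in \Z$: at each grid point it applies Theorem~\ref{T:top-bd} with the variable threshold $m + b\log^{2/3}(|i|+1)$ (making the union bound over $i$ summable by exactly your superadditivity trick), and then Corollary~\ref{C:mod-cont} together with the Taylor expansion $2\sqrt{na_i(t)} + (t-a_i(t))\sqrt{n/a_i(t)} \le 2\sqrt{nt} + n^{-1/6}/4$ handles the gaps. The translation to the stated bound is just the observation $|i(t)| \le 4n^{1/3}\log(t \vee t^{-1})$.

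Your three-stage approach---time inversion to reduce to $[1,\infty)$, an Airy-scale dyadic decomposition of $[1,2]$ with level-dependent thresholds, then Brownian scaling on outer dyadic blocks---achieves the same thing but with more moving parts. In particular, your ``base interval'' analysis (the part you flag as the main obstacle) is doing by hand what the paper's two-sided geometric grid does automatically: the grid spacing $n^{-1/3}$ near $t=1$ and the threshold $b\log^{2/3}(|i|+1)$ already encode the variable-threshold refinement you build via the shells $A_k$. Your outer-scale argument then has to reintroduce an $n^{1/3}$ union-bound factor and kill it with the growth of $m_j$, whereas the paper never incurs this factor since the variable threshold is carried throughout. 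Both arguments are sound; the paper's is shorter because the single geometric grid, indexed over all of $\Z$, simultaneously handles the time-inversion symmetry and the variable-threshold chaining in one stroke.
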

	
	Note that in Proposition~\ref{P:cross-prob} we get a $2/3$-power in the outer logarithm rather than the power of $1/2$ seen in the usual law of the iterated logarithm. This is due to the fact that the top tail of the top line of Dyson's Brownian has a Tracy-Widom $3/2$-exponent. Throughout the proof, $b, c,$ and $d$ are constants that may change from line to line.
	\begin{proof}
		Set $a_i = (1 + n^{-1/3})^i$ for $i \in \Z$.
		By Theorem~\ref{T:top-bd} and Brownian scaling, for all $i \in \Z$ we have that
		\begin{align*}
		\p\Big(W^n_1(a_i) \ge 2 \sqrt{n}\sqrt{a_i} + [m + b\log^{2/3}(|i| + 1) &]\sqrt{a_i}n^{-1/6} \Big) \le c(|i|+1)^{-db}e^{-dm^{3/2}}.
		\end{align*}
		By Corollary~\ref{C:mod-cont}, we can extend this to
		\begin{equation}
		\label{E:Wn111}
		\begin{split}
		\p\bigg(\exists& t \in [a_i, a_{i+1}] : W^n_1(t) \ge 2 \sqrt{n}\sqrt{a_i} + \frac{(t- a_i)\sqrt{n}}{\sqrt{a_i}} \\
		&+ [m + b\log^{2/3}(|i| + 1) ]\sqrt{a_i}n^{-1/6} \bigg) \le c(|i|+1)^{-db}e^{-dm^{3/2}}
		\end{split}
		\end{equation}
		as well. Now, by Taylor expansion and the definition of the sequence $a_i$, for $t \in [a_i, a_{i+1}]$ we have that
		$$
		2 \sqrt{n}\sqrt{a_i} + \frac{(t- a_i)\sqrt{n}}{\sqrt{a_i}} \le 2 \sqrt{n}\sqrt{t} + \frac{n^{-1/6}}4, \;\; \sqrt{a_i} \le 2\sqrt{t} \; \mathand \;\; |i| \le 4 n^{1/3} \log(t \vee t^{-1}).
		$$
		Therefore
		\begin{align*}
		\p\bigg(\exists t \in [a_i, a_{i+1}] : W^n_1(t) &\ge 2 \sqrt{n}\sqrt{t} + [m + b\log^{2/3}(n^{1/3} \log(t \;\vee \;t^{-1}) + 1)]  \bigg)
		\end{align*}
		satisfies the same bound as~\eqref{E:Wn111}. A union bound then proves the proposition.
	\end{proof}

	\section{Properties of the Airy point process}
	\label{S:airy-point}
	
	In this section, we prove a few basic properties about the distribution of the points
	$$
	(\scrL_1(0), \scrL_2(0), \dots).
	$$
	This sequence of points is known as the \textbf{Airy point process}. It is determinantal with locally trace class kernel given by Equation~\eqref{E:airy-kernel} in the case $s=t$.  To simplify notation in the following lemmas, we write $A_i = \scrL_i(0)$, and for $a \in \R$, define
	$$
	N_a = \card {\{ i \in \N : A_i \in [-a, \infty) \}}.
	$$
	We first recall facts about the expected location of the $k$th point in the Airy point process, the expectation and variance of $N_a$, and tail bounds for the location of $A_1$. The facts about expectation can be easily derived from standard formulas for the Airy point process density (i.e. see formula 1.17 in~\citep{soshnikov2000gaussian} and discussion thereafter). The variance bound is more involved, and is proven as Theorem 1 in~\citep{soshnikov2000gaussian}. For this next lemma and throughout this section, we define $$\kappa  = (3\pi/2)^{2/3}.$$
	The tail bounds on $A_1$ are well-known. See, for example, Exercise 3.8.2 in \cite{anderson2010introduction} (alternately, they follow from the tail bounds on the prelimit in Theorem~\ref{T:top-bd}).
	
	\begin{lemma}
		\label{L:var-airy} \hfill
		\begin{enumerate}[label=(\roman*)]
			\item $\expt A_k = -\kappa k^{2/3} + O(1)$ as $k \to \infty$ and $\expt N_a = (a/ \kappa)^{3/2} + O(1)$ as $a \to \infty$.
			\item There exist constants $c_1$ and $c_2$ such that for all $a \ge 1$, we have $\Var (N_a) \le c_1 \log a + c_2$.
			\item There exists constants $c, d > 0$ such that for all $a > 0$, we have $\p(A_1 > a) \le c\exp(- da^{3/2})$ and $\p(A_1 < - a) \le c\exp(- da^{3})$.
		\end{enumerate}
	\end{lemma}
	
	As a quick consequence of the above lemma and the determinantal structure of the Airy point process, we can bound the fluctuations of the number of Airy points in an interval.
	
	\begin{lemma}
		\label{L:box-dev} There exists constants $\al, c > 0$ such that for every $x \in \R$ and $b > \al \log (x \vee 2)$, we have
		$$
		\p\lf(\lf|N_x- (x_+/\kappa)^{3/2} \rg| \ge b \rg) \le ce^{-b}.
		$$
		Here $x_+ =\max(x,0)$.
	\end{lemma}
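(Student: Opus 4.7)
The plan is to exploit that the Airy point process is determinantal, so by the Hough--Krishnapur--Peres--Vir\'ag representation (or equivalently Soshnikov's representation theorem for determinantal processes with Hermitian locally trace-class kernel), the count $N_x$ on the fixed Borel set $[-x,\infty)$ is equal in distribution to a sum $\sum_i \xi_i$ of independent Bernoulli random variables, whose parameters are the eigenvalues of the Airy kernel restricted to $[-x,\infty)$. This reduces the task to a standard concentration inequality for sums of independent $\{0,1\}$-valued random variables, for which the mean $\mu_x := \E N_x$ and variance $\sigma_x^2:=\Var N_x$ are already controlled by Lemma \ref{L:var-airy}.

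The main tool is then Bernstein's inequality for sums of independent Bernoullis:
$$
\p(|N_x - \mu_x| \ge s) \;\le\; 2\exp\lf(-\frac{s^{2}}{2\sigma_x^{2}+2s/3}\rg),
$$
which in particular yields $\p(|N_x-\mu_x|\ge s)\le 2e^{-s}$ as soon as $s\ge 6\sigma_x^2$. To replace $\mu_x$ by the deterministic centering $\tfrac{2}{3\pi}x_+^{3/2}$, observe that by Lemma \ref{L:var-airy}(i) together with the fact that $\mu_x$ is uniformly bounded for $x$ in any bounded set (indeed exponentially small as $x\to-\infty$ by part (ii)), there is a universal constant $C_0$ with $|\mu_x - \tfrac{2}{3\pi}x_+^{3/2}|\le C_0$ for every $x\in\R$. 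Hence for $b\ge C_0$ the event $\{|N_x-\tfrac{2}{3\pi}x_+^{3/2}|\ge b\}$ is contained in $\{|N_x-\mu_x|\ge b-C_0\}$, and applying Bernstein with $s=b-C_0$ gives the bound $2e^{C_0}e^{-b}$, provided $b-C_0\ge 6\sigma_x^2$.

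Finally, the variance bound from Lemma \ref{L:var-airy}(iii) gives $\sigma_x^2\le c_1\log x + c_2$ for $x\ge 1$, while $\sigma_x^2\le \mu_x$ is uniformly bounded for $x\le 1$. These combine to $\sigma_x^2\le c_3\log(\max(x,2))$ for a universal $c_3>0$. Choosing $\alpha$ large enough that
$$
\alpha\log(\max(x,2)) - C_0 \;\ge\; 6\,c_3\log(\max(x,2))\qquad\text{for every } x\in\R
$$
then guarantees $b-C_0\ge 6\sigma_x^2$ whenever $b>\alpha\log(\max(x,2))$, and the lemma follows with $c=2e^{C_0}$.

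The only content being imported is the independent-Bernoulli decomposition for determinantal counts; once that is in hand the proof is a routine Bernstein calculation, with the $\log \max(x,2)$ threshold coming directly from Soshnikov's $\log$-variance estimate. The mildest delicate point is uniformity in $x$ across the transition $x\approx 0$, which is handled by absorbing the bounded-$x$ regime into the constants through the $\max(x,2)$ cutoff.
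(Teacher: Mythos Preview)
Your proposal is correct and follows essentially the same route as the paper: represent $N_x$ as a sum of independent Bernoullis via the determinantal structure, apply Bernstein's inequality, and feed in the mean and variance estimates from Lemma~\ref{L:var-airy}. The paper's proof is terser---it first reduces to $x\ge 0$ by monotonicity of $N_x$ rather than handling negative $x$ directly---but the substance is identical.
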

	
	\begin{proof}
		By the monotonicity of $N_x$ in $x$, it is enough to prove the lemma when $x \ge 0$.
		The number of points in any interval in a determinantal point process with a locally trace class kernel is equal in distribution to a sum of independent Bernoulli random variables (see~\citep{hough2009zeros}, Theorem 4.5.3). Therefore Bernstein's inequality gives that
		$$
		\p(|N_x-\expt N_x| \ge b) \le 2 \exp\left(\frac{-\frac{1}2b^2}{\Var(N_x) + \frac{1}3 b} \right).
		$$
		Applying the bounds on $\expt N_x$ and $\Var(N_x)$ from Lemma~\ref{L:var-airy} completes the proof. 
	\end{proof}
	
	We also record a corollary which translates the above lemma into a bound on the Airy point locations.
	
	\begin{corollary}
		\label{C:point-locations}
		There exist $c, \beta > 0$ such that for all $i \in \N$ and $m > \beta \log (i + 1)$, we have that
		$$
		\p(|A_i + \kappa i^{2/3}| \ge mi^{-1/3}) \le ce^{-m/5}.
		$$
	\end{corollary}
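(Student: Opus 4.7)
The plan is to translate the stated tail probability for the order statistic $A_i$ into a statement about the counting function $N_x$ via the duality $\{A_i \geq -x\} = \{N_x \geq i\}$, and then invoke Lemma \ref{L:box-dev}. The constant $\kappa = (3\pi/2)^{2/3}$ is tailor-made: $\kappa^{3/2} = 3\pi/2$, so $\frac{2}{3\pi}(\kappa i^{2/3})^{3/2} = i$, meaning $-\kappa i^{2/3}$ is exactly the location at which $\E N_x$ crosses the value $i$.

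For the upper tail, $\{A_i + \kappa i^{2/3} \geq m i^{-1/3}\} = \{N_{x_-} \geq i\}$ with $x_- := \kappa i^{2/3} - m i^{-1/3}$. When $m \leq \kappa i$, so $x_- \geq 0$, the elementary inequality $(1-t)^{3/2} \leq 1-t$ on $[0,1]$ gives
$$
\frac{2}{3\pi} x_-^{3/2} = i\lf(1 - \frac{m}{\kappa i}\rg)^{3/2} \leq i - \frac{m}{\kappa}.
$$
Combined with Lemma \ref{L:var-airy}(i), this yields $\E N_{x_-} \leq i - m/\kappa + O(1)$, so Lemma \ref{L:box-dev} applied with $b = m/\kappa - O(1)$ gives a bound of the form $c e^{-m/\kappa + O(1)}$. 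Since $1/\kappa > 1/5$, this is absorbed into $c' e^{-m/5}$. The admissibility constraint $b > \alpha \log\max(x_-, 2) \leq (2\alpha/3) \log i + O(1)$ of Lemma \ref{L:box-dev} is secured by choosing $\beta$ large enough in the hypothesis $m > \beta \log(i+1)$. The lower tail is entirely symmetric: writing $\{A_i + \kappa i^{2/3} \leq -m i^{-1/3}\} \subseteq \{N_{x_+} < i\}$ with $x_+ := \kappa i^{2/3} + m i^{-1/3}$, convexity of $(1+t)^{3/2}$ gives $(1+t)^{3/2} \geq 1 + \tfrac{3}{2} t$, hence $\E N_{x_+} \geq i + 3m/(2\kappa) + O(1)$, and Lemma \ref{L:box-dev} yields $c e^{-3m/(2\kappa) + O(1)} \leq c' e^{-m/5}$ since $3/(2\kappa) > 1/5$.

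The main technical nuisance is the upper tail when $m > \kappa i$, where $x_-$ turns negative and the Taylor expansion degenerates; I split this regime at $m = 5i$. For $\kappa i < m \leq 5i$, Lemma \ref{L:var-airy}(i) gives $\E N_{x_-} = O(1)$, so Lemma \ref{L:box-dev} with $b = i - O(1)$ yields $c e^{-i + O(1)} \leq c' e^{-m/5}$ using $i \geq m/5$. For $m > 5i$, I use the containment $\{N_{x_-} \geq i\} \subseteq \{A_1 \geq |x_-|\}$ together with the Tracy-Widom-type tail $\p(A_1 \geq a) \leq c e^{-(4/3) a^{3/2}}$ from Lemma \ref{L:var-airy}(ii); since $m > 5i$ forces $|x_-| = (m - \kappa i) i^{-1/3} \geq (1 - \kappa/5) m i^{-1/3}$, the exponent $(4/3)|x_-|^{3/2}$ is bounded below by a constant times $m^{3/2}/i^{1/2} \geq m/5$. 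Collecting the three cases for the upper tail together with the lower tail and absorbing the various $O(1)$ additive constants into $c$ gives the stated bound.
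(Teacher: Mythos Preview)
Your proof is correct and follows essentially the same approach as the paper: both translate the tail event for $A_i$ into a deviation event for the counting function $N_x$, bound the gap between $i$ and $\frac{2}{3\pi}x_\pm^{3/2}$ by a linear function of $m$ using the convexity/concavity of $t\mapsto t^{3/2}$, invoke Lemma~\ref{L:box-dev}, and handle the extreme upper-tail regime $m>5i$ separately via the Tracy--Widom bound of Lemma~\ref{L:var-airy}(ii). The case split at $m=\kappa i$ and $m=5i$, the constants $3/(2\kappa)$ and $1/\kappa$, and the verification that these exceed $1/5$ all match the paper's argument line by line.
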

	
	In the proof $c$ is a constant that may change from line to line.
	\begin{proof}
		Fix $m > 0$ and $i \in \N$ and let $x_m = \kappa i^{2/3} + mi^{-1/3}$. We have that
		\begin{equation}
		\label{E:A-bd}
		\p(A_i < -x_m) = \p(N_{x_m} < i) = \p\lf( (x_m/\kappa)^{3/2} - N_{x_m} >  (x_m/\kappa)^{3/2} - i\rg).
		\end{equation}
		We have the bound
		$$
		(x_m/\kappa)^{3/2} - i > \frac{3}{2\kappa } m > \frac{m}5.
		$$
		Moreover, letting $\al$ be as in Lemma~\ref{L:box-dev}, there exists a $\beta > 0$ such that
		$$
		\frac{m}5 > \al \log (x_m \vee 2)
		$$
		whenever $m > \beta \log (i + 1)$. Applying Lemma~\ref{L:box-dev} then shows that
		$$
		\p(A_i + \kappa i^{2/3} < - mi^{-1/3}) \le ce^{-m/5}.
		$$
		Now let $y_m = \kappa i^{2/3} - mi^{-1/3}$ and let $y_{m, +} = \max(y_m, 0)$. Observe that
		\begin{equation}
		\label{E:A-bd-2}
		\p(A_i > -\kappa i^{2/3} + m i^{-1/3}) = \p(N_{y_m} > i) = \p\lf(N_{y_m} - (y_{m, +}/\kappa)^{3/2} > i - (y_{m, +}/\kappa)^{3/2}\rg).
		\end{equation}
		We have that
		$$
		i - (y_{m, +}/\kappa)^{3/2} \ge \min (i, \kappa ^{-1} m) \ge \min(i, m/5).
		$$
		Again letting $\al$ be as in Lemma~\ref{L:box-dev}, there exists a $\beta > 0$ such that right hand above is bounded below by $\al \log (y_m \vee 2)$ whenever $5i \ge m > \beta \log i$. Applying Lemma~\ref{L:box-dev} then gives
		$$
		\p(A_i + \kappa i^{2/3} < - mi^{-1/3}) \le ce^{-m/5},
		$$
		for $5i \ge m > \beta \log i$.
		When $m \ge 5i$, by the standard Tracy-Widom tail bound (Lemma~\ref{L:var-airy} (iii)), we have that
		$$
		\p(A_i > -\kappa  i^{2/3} + mi^{-1/3}) \le \p(A_1 > -\kappa  i^{2/3} + mi^{-1/3}) \le ce^{-\frac{4}3 (2mi^{-1/3}/5)^{3/2}} \le c e^{-m/5}.
		$$
		Here we have used that $5 - \kappa  > 2$, so $-\kappa  i^{2/3} + mi^{-1/3} \ge 2mi^{-1/3}/5$.
	\end{proof}
	%
	%
	
	We now turn our attention to bounding the probability that Airy points are close together.
	For a point process $\Pi$ on $\R$, we say that a point $x \in \Pi$ is $\delta$-jammed if there is a point $y \in \Pi$ such that $|x -y| \le \delta$. We start with a proposition bounding the number of $\delta$-jammed points in general determinantal processes.
	
	\begin{prop}
		\label{P:de-jam-gen}
		Consider  a deteminantal point process $\Pi$ on an interval $[a, a + \ell]$ with a $C^2$ kernel $K$. Suppose that there exists a constant $b$ such that for all $(x, y) \in [a, a + \ell]^2$,
		$$
		|K(x, y)| \le b, \quad \lf|\del_x K(x, y)\rg| \le b^2, \quad \lf|\del_y K(x, y)\rg| \le b^2, \quad \mathand \quad \lf|\del_{x, y} K(x, y)\rg| \le b^3.
		$$
		Let $L$ be the number of $\delta$-jammed points in $\Pi$. Then for every $n \in \mathbb{N}$, we have that
		$$
		\mathbb{E} {\lfloor L/3 \rfloor \choose n} n! \le (2nb^{4}\delta^{3} \ell)^n.
		$$
		Here and throughout the remainder of the paper we use the convention that ${m \choose n} = 0$ whenever $n > m$.
	\end{prop}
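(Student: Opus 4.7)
First, a combinatorial reduction explains the factor of $3$ in $\lfloor L/3 \rfloor$. Order the $\delta$-jammed points of $\Pi$ as $z_1 < \cdots < z_L$. Since every jammed point has some $\Pi$-neighbour within $\delta$, which must itself be jammed, the closest jammed point to $z_k$ is either $z_{k-1}$ or $z_{k+1}$, and at least one of these is within $\delta$. Thus in the graph on $\{1,\ldots,L\}$ with edges $k \sim k{+}1$ whenever $z_{k+1} - z_k \le \delta$, every vertex has degree at least one, so the graph is a disjoint union of ``runs'' of $m \ge 2$ consecutive indices, and each run contributes at least $\lfloor m/2 \rfloor \ge m/3$ disjoint $\delta$-close pairs. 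Summing over runs yields at least $\lfloor L/3 \rfloor$ disjoint $\delta$-close pairs in $\Pi$, so if
$$M_n = \#\{((x_1,y_1),\ldots,(x_n,y_n)) : x_i, y_i \in \Pi \text{ all distinct}, \; x_i < y_i \le x_i + \delta\},$$
then deterministically $\binom{\lfloor L/3 \rfloor}{n} n! \le M_n$.

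Taking expectations and using the standard correlation formula for determinantal processes,
$$\mathbb E M_n = \int_{[a,a+\ell]^{2n}} \mathbb 1(x_i < y_i \le x_i + \delta\;\forall i)\,\det\bigl(K(z_\alpha, z_\beta)\bigr)_{\alpha,\beta=1}^{2n}\, dz,$$
with $z_{2i-1} = x_i$, $z_{2i} = y_i$. The main step is a pointwise bound on this determinant. I would perform the row operations $(\text{row }y_i) - (\text{row }x_i)$ and column operations $(\text{col }y_i) - (\text{col }x_i)$, preserving the determinant and producing a block matrix $M' = \bigl(\begin{smallmatrix} A & B \\ C & D \end{smallmatrix}\bigr)$, where $A_{ij} = K(x_i,x_j)$, while $B, C$ are first-order finite differences of $K$ and $D$ is a mixed second difference. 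Using integral representations such as $K(x_i, y_j) - K(x_i, x_j) = \int_{x_j}^{y_j} \partial_2 K(x_i, t)\,dt$ (and the analogues for $C, D$), I would factor $B = \tilde B \Delta_y$, $C = \Delta_y \tilde C$, $D = \Delta_y \tilde D \Delta_y$, where $\Delta_y = \operatorname{diag}(y_i - x_i)$ and $|\tilde B_{ij}|, |\tilde C_{ij}| \le b^2$, $|\tilde D_{ij}| \le b^3$. This gives $|\det M'| = \det(\Delta_y)^2 |\det \tilde M| \le \delta^{2n} |\det \tilde M|$. A further diagonal rescaling $\tilde M = \operatorname{diag}(I_n, b I_n)\, N\, \operatorname{diag}(I_n, b I_n)$ produces a matrix $N$ whose every entry is bounded by $b$, so Hadamard's inequality gives $|\det N| \le (\sqrt{2n}\, b)^{2n} = (2n)^n b^{2n}$. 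Combining yields the pointwise bound $|\det M'| \le (2n)^n b^{4n} \delta^{2n}$.

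The integration region has volume at most $(\ell\delta)^n$, so
$$\mathbb E \binom{\lfloor L/3 \rfloor}{n} n! \le \mathbb E M_n \le (2n)^n b^{4n} \delta^{2n} \cdot (\ell \delta)^n \le (4n b^4 \delta^3 \ell)^n,$$
as required. The main obstacle is this pointwise determinant bound: Hadamard applied directly to the original matrix, or even to $M'$ without the auxiliary $b$-rescaling, recovers only $\delta^n b^{3n}$, too weak by a factor of $\delta^n b^n$. The resolution is to carry out row \emph{and} column operations before extracting the $(y_i - x_i)$ scales via $\Delta_y$ on both sides, and then to apply a second $b$-scaling to equalize the magnitudes of the four blocks of $\tilde M$ before invoking Hadamard.
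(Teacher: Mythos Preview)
Your proof is correct and follows essentially the same route as the paper. Both arguments (i) extract $\lfloor L/3\rfloor$ disjoint $\delta$-close pairs from the jammed set via a greedy matching, (ii) bound the expected count of $n$-tuples of such pairs by the $2n$-point correlation integral, and (iii) transform the $2n\times 2n$ kernel matrix so that finite differences pull out a factor $\delta^{2n}$, a further $b$-rescaling equalizes all entries to size $\le b$, and Hadamard finishes. The only cosmetic difference is packaging: the paper performs steps (iii) in one shot by conjugating with a block-diagonal matrix built from $2\times 2$ blocks $\bigl(\begin{smallmatrix}1/2 & 1/2\\ (b\delta)^{-1} & -(b\delta)^{-1}\end{smallmatrix}\bigr)$, whereas you do row/column subtractions, then factor out $\Delta_y$, then rescale by $\operatorname{diag}(I_n,bI_n)$ --- but these are the same transformation up to harmless averaging and a permutation of rows.
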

	
	%
	
	To prove Proposition~\ref{P:de-jam-gen}, we start with a simple combinatorial lemma.
	
	\begin{lemma}
		\label{L:k-tuples}
		Let $\Pi$ be a point process on $[a, a + \ell]$ and let $L$ be the number of $\delta$-jammed points in $\Pi$. Let $R$ be the number of $2n$-tuples $(x_1, y_1, \dots, x_n, y_n)$ of distinct elements of $\Pi$ such that $|x_i-y_i|\le\delta$ for all $i \in \{1, \dots, n\}$. We have that
		$$R\ge {\lfloor L/3 \rfloor \choose n }n! 2^n.$$
		Here we use the convention that ${k \choose n} = 0$ if $k < n$.
	\end{lemma}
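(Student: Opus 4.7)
The plan is to construct an explicit matching on the jammed points and then count ordered $2n$-tuples obtained from it.

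First I would set up a graph $G$ whose vertex set is $\Pi \cap [a,a+\ell]$, with an edge between $x \ne y$ whenever $|x-y| \le \delta$. By definition, the jammed points are exactly the non-isolated vertices of $G$, so $|V(G)| \ge L$ once we restrict to this non-isolated subset. The key structural observation is that the connected components of $G$ are ``runs'' of consecutive points of $\Pi$: if $p_1 < p_2 < \dots < p_k$ are the points of $\Pi$ in one component (listed in increasing order), then each gap $p_{i+1}-p_i$ is at most $\delta$, because any jump exceeding $\delta$ would disconnect the component (no $y < p_{i+1}$ in the component can be a neighbor of any $z \ge p_{i+1}$).

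The second step is to extract a large matching from each component. On a run $p_1 < \dots < p_k$ with $k \ge 2$, pair $(p_1,p_2),(p_3,p_4),\ldots$, giving $\lfloor k/2\rfloor$ disjoint pairs, each with points at distance $\le \delta$. One checks directly that $\lfloor k/2\rfloor \ge k/3$ for every integer $k \ge 2$ (the tightest case is $k=2$, where $1 \ge 2/3$). Summing over the non-trivial components of $G$, whose sizes partition the $L$ jammed points, yields a set $\mathcal{M}$ of disjoint valid pairs with $|\mathcal{M}| \ge L/3$, hence $|\mathcal{M}| \ge \lfloor L/3 \rfloor$ since the cardinality is an integer.

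The third step is the counting. From any collection of $\lfloor L/3\rfloor$ disjoint $\delta$-close pairs, we build a valid $2n$-tuple $(x_1,y_1,\dots,x_n,y_n)$ of distinct points by (i) choosing which $n$ of the pairs to use, in $\binom{\lfloor L/3\rfloor}{n}$ ways; (ii) choosing the order in which to list the $n$ chosen pairs, in $n!$ ways; (iii) deciding, for each chosen pair, which of its two elements is $x_i$ and which is $y_i$, in $2^n$ ways. Distinct choices produce distinct tuples because the pairs are disjoint and the labeling is recorded. Each such tuple satisfies $|x_i-y_i| \le \delta$ and uses distinct elements of $[a,a+\ell]$, so it is counted by $R$, giving $R \ge \binom{\lfloor L/3\rfloor}{n} n!\, 2^n$.

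The only delicate point is the lower bound $\lfloor k/2\rfloor \ge k/3$ for $k \ge 2$, which is what forces the denominator $3$ in $\lfloor L/3 \rfloor$ rather than $2$; it is used to cover pathological components of size $2$ in which only one pair can be formed out of two jammed points. Everything else is a direct greedy-matching-plus-counting argument, so I expect no real obstacle beyond being careful that the components of $G$ really are intervals of consecutive $\Pi$-points, which follows immediately from the linear ordering.
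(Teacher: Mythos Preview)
Your argument is correct and follows essentially the same strategy as the paper: build a matching of at least $\lfloor L/3\rfloor$ disjoint $\delta$-close pairs among the jammed points, then count ordered labelled selections of $n$ pairs. The paper's version runs the greedy algorithm directly on the sorted list $z_1<\dots<z_L$ of jammed points (match $z_i$ to $z_{i+1}$ when possible) and simply asserts the $\lfloor L/3\rfloor$ bound; you instead pass through the component structure of the $\delta$-graph on $\Pi$ and use $\lfloor k/2\rfloor\ge k/3$ per component, which makes the $\lfloor L/3\rfloor$ bound more transparent. One cosmetic remark: the tight case of $\lfloor k/2\rfloor\ge k/3$ is $k=3$, not $k=2$, but this does not affect the proof.
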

	
	\begin{proof}
		Let $Z = \{z_1 < z_2 < \dots < z_L\}$ be the set of $\delta$-jammed points. We can construct a partial matching on $Z$ via the following greedy algorithm. At each step $i$, we match $z_i$ to $z_{i+1}$ if $z_i$ was not matched to $z_{i-1}$ and if $|z_i - z_{i+1}| \le \delta$. This algorithm produces a matching with at least $\lfloor L/3 \rfloor$ pairs $(z_i, z_{i+1})$. By counting the assignments of $(x_i, y_i)$ among the pairs in this matching, we get the desired lower bound.
		%
		%
		%
	\end{proof}
	
	%
	%
	%

	\begin{proof}[Proof of Proposition~\ref{P:de-jam-gen}]
		Let $R$ be as in Lemma~\ref{L:k-tuples}. We have that
		$$
		\expt R = \int_S \det M d \mu, \quad \text{where} \quad S = \lf\{(x_1, y_1, \dots, x_n, y_n)\in [a,a+\ell]^{2n} : |x_i-y_i|\le\delta \rg\}
		$$
		and $\mu$ is Lebesgue measure (see Remark 1.2.3, Definition 4.2.1 in~\cite{hough2009zeros}).
		In the above formula, $M$ is a $2n \X 2n$ matrix consisting of $n^2$ $2\times 2$ blocks, where each block is of the form
		\[
		M_{i, j} =
		\begin{bmatrix}
		K(x_i,x_j) & K(x_i,y_j) \\
		K(y_i,x_j) &K(y_i,y_j) \\
		\end{bmatrix}.
		\]
		We first bound $\det M$ on the set $S$. To do this, we will compute $\det B M B^t$, where $B$ is a block diagonal matrix consisting of $n$ $2 \times 2$ blocks of the form
		\[
		A=
		\begin{bmatrix}
		1/2 & 1/2 \\
		(b\delta)^{-1} & -(b\delta)^{-1} \\
		\end{bmatrix}.
		\]
		We can calculate the blocks of $BMB^t$ by computing that
		\[
		A
		\begin{bmatrix}
		e & f \\
		g & h \\
		\end{bmatrix}
		A^t =
		\begin{bmatrix}
		\dfrac{e+f+g+h}4 \quad & \quad \lf(\dfrac{e-f}{\de}+\dfrac{g-h}{\de}\rg)/2b \\[2ex]
		\left(\dfrac{e-g}{\delta}+\dfrac{f-h}{\delta}\right)/2b \quad &  \quad \lf(\dfrac{(e -g)}{\delta} - \dfrac{(f -h)}{\delta}\rg)/(\de b^2) \\[2ex]
		\end{bmatrix}.
		\]
		%
		Substituting in the entries of $M_{i, j}$ for $e, f, g,$ and $h$ above, the $(1, 1)$-entry of the resulting matrix $AM_{i, j} A^t$ is an average of values of $K$. The $(1, 2)$-entry is an average of difference quotients of $K$, multiplied by $(\delta b)^{-1}(x_j - y_j)$, and similarly the $(2, 1)$-entry  is an average of difference quotients of $K$, multiplied by $(\delta b)^{-1}(x_i - y_i)$. Finally, the $(2, 2)$-entry is a
		second difference quotient, multiplied by $(\delta b)^{-2}(x_i - y_i)(x_j - y_j)$.
		
		
		The mean value theorem then implies that all entries of $BMB^t$ are bounded by $b$ on $S$ since $|x_j - y_j| \le \delta$ for all $i \in \{1, \dots, n\}$ on this set. Therefore
		$$
		\det(M) = \frac{\det(BMB^t)}{(\det B)^2} = (b\delta)^{2n}\det(BMB^t) \le
		(b\delta)^{2n}\sqrt{2nb^2}^{\,2n}.
		$$
		The last inequality follows from Hadamard's inequality and the fact that the length of each of the columns of $BMB^t$ is bounded by $\sqrt{2 n b^2}$.
		Combining this inequality with the fact that the volume of $S$ is less than $(2\delta\ell)^{n}$ shows that $\expt R \le (4nb^4 \de^3\ell)^n$. Lemma~\ref{L:k-tuples} then completes the proof of Proposition~\ref{P:de-jam-gen}.
	\end{proof}
	%
	%
	%

	\begin{corollary}
		\label{C:de-jam-airy}
		There exists a constant $c$ such that for any $a \ge 1, \; \ell > 0$, and $n \in \N$, the number $L$ of $\delta$-jammed points in the Airy point process in the interval $[-a, -a + \ell]$ satisfies
		$$
		\mathbb{E} {\lfloor L/3 \rfloor \choose n} n! \le (cna^{2}\delta^{3} \ell)^n.
		$$
	\end{corollary}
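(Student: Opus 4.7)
The plan is to apply Proposition \ref{P:de-jam-gen} to the Airy point process on $[-a, -a+\ell]$, whose correlation kernel is the equal-time Airy kernel
$$K(x,y) = \int_0^\infty \Ai(x+\lambda)\Ai(y+\lambda) \, d\lambda,$$
obtained by setting $s=t$ in \eqref{E:airy-kernel}. This kernel is $C^\infty$, so the task reduces to picking a constant $b$ and verifying the four pointwise bounds of Proposition \ref{P:de-jam-gen}. I aim for $b = c\sqrt{a}$, since then $4nb^4\delta^3\ell = 4c^4\,na^2\delta^3\ell$ matches the target form $(cna^2\delta^3\ell)^n$ in the statement.

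The verification will proceed by differentiating under the integral sign and bounding each integrand pointwise using standard Airy asymptotics: $|\Ai(z)| \lesssim (1+|z|)^{-1/4}$ for $z \le 0$ and $|\Ai(z)| \lesssim e^{-\frac{2}{3}z^{3/2}}$ for $z \ge 0$, together with the analogous bounds $|\Ai'(z)| \lesssim (1+|z|)^{1/4}$ for $z \le 0$ and $|\Ai'(z)| \lesssim (1+z)^{1/4}e^{-\frac{2}{3}z^{3/2}}$ for $z \ge 0$. The worst case for $x,y \in [-a,-a+\ell]$ occurs at $x=y=-a$, where splitting the $\lambda$-integral at $\lambda = a$ (the boundary between the oscillatory and decaying regimes of the Airy functions) and using the above bounds gives
\begin{align*}
|K(x,y)| &\lesssim \int_0^a (a-\lambda)^{-1/2} \, d\lambda + O(1) \lesssim \sqrt{a}, \\
|\partial_x K(x,y)|, \; |\partial_y K(x,y)| &\lesssim \int_0^a 1 \, d\lambda + O(1) \lesssim a, \\
|\partial_{xy} K(x,y)| &\lesssim \int_0^a (a-\lambda)^{1/2} \, d\lambda + O(1) \lesssim a^{3/2}.
\end{align*}
Since $a \ge 1$, these match $b$, $b^2$, $b^2$, and $b^3$ respectively with $b = c\sqrt{a}$ for a universal $c$, so Proposition \ref{P:de-jam-gen} applies and yields the corollary.

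The main subtlety I anticipate is arguing that these bounds hold uniformly in $\ell$ and in the specific choice of $x,y$, not just at the endpoint $-a$. When $\lambda$ is large enough that $x+\lambda$ or $y+\lambda$ is large and positive (which can occur for big $\ell$ or points near the right end of the interval), the corresponding Airy factor decays super-exponentially, so that region contributes only $O(1)$ to each integral, uniformly in $\ell$. The oscillatory contribution, which sets the leading order, only involves arguments bounded below by $-a$, so the $\sqrt{a},\, a,\, a^{3/2}$ scalings depend on $a$ alone; the mild singularity near the Airy transition $\lambda \approx a$ is integrable thanks to the uniform boundedness of $\Ai$ and $\Ai'$ on compact sets. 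With these estimates in hand, Proposition \ref{P:de-jam-gen} gives the conclusion.
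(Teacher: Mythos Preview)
Your proposal is correct and follows essentially the same approach as the paper: both proofs invoke the equal-time Airy kernel, differentiate under the integral sign, use the standard Airy asymptotics to bound $K$, $\partial_x K$, $\partial_y K$, $\partial_{xy}K$ by $c\sqrt{a}$, $ca$, $ca$, $ca^{3/2}$ respectively (i.e.\ $m = c\max(-x,-y,1) \le ca$), and then apply Proposition \ref{P:de-jam-gen} with $b = c\sqrt{a}$. Your explicit treatment of the uniformity in $\ell$ and in the location of $x,y$ within the interval is a bit more detailed than the paper's, but the argument is the same.
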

	%
	%
	\begin{proof}
		Recall from~\eqref{E:airy-kernel} that we have the following formula for the kernel of the Airy point process:
		\begin{equation}
		\label{E:Kxy}
		K(x,y)=\int_0^\infty{\Ai(x+\la)\Ai(y+\la)d\la}.
		\end{equation}
		
		To bound $K$ and its derivatives, we use the following bounds that hold for all $x \ge 1$ (see~\citep{abramowitz1972handbook}, formulas 10.4.59-10.4.62).
		\begin{align}
		\label{E:airy-bds}
		|\sqrt{x}\Ai(-x)|, \; |\Ai'(-x)|\le& \;cx^{1/4} \quad \mathand \quad |\sqrt{x}\Ai(x)|, \; |\Ai'(x)|\le cx^{1/4}e^{-(2/3)x^{3/2}}.
		\end{align}
		Here $c$ is a positive constant. Also note that $\Ai(x)$ and $\Ai'(x)$ are bounded on $[-1,1]$ by continuity.
		Since the Airy function is analytic and decays exponentially fast as $x \to \infty$, we can differentiate under the integral sign in~\eqref{E:Kxy} to get formulas for $\partial_x K, \partial_y K$ and $\partial_{x, y} K$. By the bounds in~\eqref{E:airy-bds}, we get that
		$$
		|K(x,y)|\le m^{1/2},\quad |\partial_x K(x,y)|\le m, \quad |\partial_{x,y}K(x,y)|\le m^{3/2},
		$$
		where $m=c'\max(-x,-y,1)$ for a constant $c'$. Applying Proposition~\ref{P:de-jam-gen} with $b = c'a^{1/2}$ finishes the proof.
	\end{proof}

	As a consequence of Corollary~\ref{C:de-jam-airy}, we can get the following tail bound on the number of $\de$-jammed points in the Airy point process in a given interval. This next proposition is a restatement of Proposition~\ref{P:mean-conc-i}.
	\begin{prop}
		\label{P:mean-conc}
		There exists a constant $d > 0$ such that for all $a \ge 1, \; \ell, m > 0$, and $\de > 0$,  the number of $\de$-jammed points $L$ in the Airy point process in the interval $[-a, -a + \ell]$ satisfies
		$$
		\prob (L > m a^2 \de^3 \ell) \le 2\exp \lf(-d m\lf(\frac{a^2\de^3\ell}{a^2 \de^3 \ell + 1}\rg) \rg).
		$$
	\end{prop}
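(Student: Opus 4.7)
The plan is to convert the factorial-moment estimate from Corollary~\ref{C:de-jam-airy} into a tail bound via Markov's inequality. Write $\eta = a^2\delta^3\ell$ and $j = \lfloor L/3 \rfloor$; the corollary reads $\E \binom{j}{n} \le (Cn\eta)^n/n!$, where $C$ is the universal constant from that corollary. Combined with the elementary inequality $\binom{k}{n} \ge (k-n+1)^n/n!$ valid for $n \le k$, Markov's inequality gives, for all integers $1 \le n \le k$,
$$
\prob(j \ge k) \;\le\; \frac{\E\binom{j}{n}}{\binom{k}{n}} \;\le\; \left(\frac{Cn\eta}{k-n+1}\right)^n.
$$

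Next I would optimize over $n$. Setting $Cn\eta/(k-n+1)=1/e$ rearranges to $n(eC\eta+1)=k+1$, so the natural choice is $n_* := \lfloor (k+1)/(eC\eta+1)\rfloor$; whenever $n_* \ge 1$ this yields $\prob(j \ge k) \le e^{-n_*}$. To pass from $j$ back to $L$, observe that $L > m\eta$ forces $j \ge k$ with $k := \lceil (m\eta-2)/3\rceil$, and plug this in.

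The remaining bookkeeping is to extract the $\eta/(\eta+1)$ factor from $n_*$. The crucial comparison is
$$
eC\eta+1 \;\le\; (eC+1)(\eta+1),
$$
which rearranges to $\eta/(eC\eta+1) \ge \eta/((eC+1)(\eta+1))$. Combined with $k \ge m\eta/3 - 1$, this produces a linear lower bound $n_* \ge d'\, m\eta/(\eta+1)$ for a universal $d'>0$, valid whenever $m\eta/(\eta+1)$ exceeds a universal threshold. The leading factor of $2$ in the statement absorbs the complementary small-$m$ regime: if $dm\eta/(\eta+1) \le \log 2$ then $2e^{-dm\eta/(\eta+1)}\ge 1$ automatically, and there is nothing to prove. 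The main nuisance is picking a single constant $d$ that works uniformly in both $\eta \le 1$ and $\eta \ge 1$; the displayed comparison is designed precisely to make this possible.
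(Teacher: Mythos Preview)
Your proof is correct, but the paper takes a different route. Both arguments start from Corollary~\ref{C:de-jam-airy}, but from there the paper passes to an exponential moment: it uses the elementary inequality $m^n \le 4^n\binom{\lfloor m/3\rfloor}{n}n! + (12n)^n$ to convert the factorial-moment bound into control on $\E L^n$, sums over $n$ to show $\E e^{bL}\le 2$ for $b = d/(a^2\delta^3\ell+1)$, and then applies Markov to $e^{bL}$. Your approach skips the MGF entirely, applying Markov directly to $\binom{j}{n}$ via the lower bound $\binom{k}{n}\ge (k-n+1)^n/n!$ and optimizing over $n$. Your route is a bit more elementary, avoiding the moment-conversion inequality; the paper's route has the mild advantage of yielding an exponential-moment bound as a byproduct, which is a marginally stronger conclusion. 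Either way the bookkeeping to extract the $\eta/(\eta+1)$ factor and absorb small $m$ into the leading $2$ is essentially the same.
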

	
	\begin{proof}
		Observe that for every $n, m \in \N$ we have
		\begin{equation}
		\label{E:mfact}
		m^n \le 4^n {\floor{m/3} \choose n} n! + (12n)^n.
		\end{equation}
		To see this, first note that $m^n \le (12n)^n$ if $m \le 12n$. For $m \ge 12n$, we have that
		$$
		4^n {\floor{m/3} \choose n} n! = 4^n \floor{m/3}(\floor{m/3} - 1) \cdots (\floor{m/3} - n + 1).
		$$
		Since $m \ge 12n,$ we have that $\floor{m/3} - n + 1 \ge m/4$, and so the right hand side above is greater that $4^n(m/4)^n = m^n$, yielding \eqref{E:mfact} in this case.
		Therefore by Corollary~\ref{C:de-jam-airy} and Fubini's Theorem, for any $b > 0$ we have that
		$$
		\expt e^{bL} = \sum_{n=0}^\infty \frac{b^n \expt L^n}{n!} \le \sum_{n=0}^\infty \lf(\frac{(12nb)^n}{n!} + \frac{(4cna^2 \de^3 \ell b)^n}{n!} \rg).
		$$
		Here the constant $c$ is as in Corollary~\ref{C:de-jam-airy}. Therefore there is a universal constant $d > 0$ such that with $b=d/(a^2 \de^3 \ell + 1)$ we have $\expt e^{bL}\le 2$.
		Applying Markov's inequality to the event $e^{bL}>e^{mba^2\delta^3\ell}$ completes the proof.
	\end{proof}
	
	In order to prove the bridge representation of the parabolic Airy line ensemble $\scrL$, we first need to define associated graphs that record which points in $\scrL$ are $\de$-jammed.
	
	\begin{definition}
		\label{D:bridge-graph}
		Fix $t > 0$ and $\ell \in \N$. Define $s_i = it/\ell$ for all $i \in \{0, 1, \dots, \ell\}$. For $k, \de > 0$, we define a random graph $G_k(t, \ell, \de)$ on the set
		$$
		S_k(\ell) = \{1, \dots, k\} \X \{1, \dots, \ell \},
		$$
		where the points $(i, j)$ and $(i + 1, j)$ are connected if either
		$$
		|\scrL_i(s_{j-1}) - \scrL_{i+1}(s_{j-1})| \le \de \quad \mathor \quad |\scrL_i(s_j) - \scrL_{i+1}(s_j)| \le \de.
		$$
		Let $M_k(t, \ell, \de)$ be the size of the largest component of $G_k(t, \ell, \de)$.
	\end{definition}
	
	The second important consequence of Corollary~\ref{C:de-jam-airy} gives a bound on the size of components in $G_k(t, \ell, \de)$. In other words, it allows us to bound the size of long chains of $\de$-jammed points in the parabolic Airy line ensemble.
	
	\begin{prop}
		\label{P:delta-chains} There exists $c>0$ so that for every $\de \in (0, 1]$, $\ell, m, k \in \N$ with $k \ge 2$ we have that
		\begin{equation}
		\label{E:m-want}
		\prob (M_k(t, \ell, \de k^{-1/3}) \ge m) \le \ell k [c m \log (k/\de)]^m \de^{3\floor{m/6}}.
		\end{equation}
	\end{prop}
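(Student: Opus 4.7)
Write $\delta' := \delta k^{-1/3}$. The plan combines three ingredients: a union bound over columns using the time-stationarity of gaps in the Airy line ensemble, a pigeonhole reduction from a chain to disjoint close pairs at a single time, and the determinantal close-pair estimate from Proposition \ref{P:de-jam-gen}. The overall strategy is to show that any long chain forces many disjoint close pairs, then apply Proposition \ref{P:de-jam-gen} after confining the Airy points to a bounded interval.

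Because $\scrR(t) = \scrA(t) + t^2$ is stationary in $t$ and $|\scrA_r(t) - \scrA_{r+1}(t)| = |\scrR_r(t) - \scrR_{r+1}(t)|$, the subgraph of $G_k$ restricted to column $j$ is equidistributed in $j$, so a union bound over columns costs only a factor of $\ell$. In a single column, a chain of size $m$ consists of $m-1$ consecutive-line edges, each closed at $s_0 = 0$ or $s_1 = t/\ell$; pigeonhole yields $\lceil (m-1)/2 \rceil$ of them closed at a common time, and by time-stationarity this time can be fixed to $0$ at the cost of a factor $2$. A greedy alternating selection then extracts $n := \lfloor m/6 \rfloor$ pairwise disjoint close consecutive-line pairs from these. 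Hence, up to a prefactor $2\ell$, the event $\{M_k \geq m\}$ is contained in the event that at time $0$ the first $k$ Airy points admit $n$ pairwise disjoint $\delta'$-close pairs.

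To bound this last probability, I would confine the first $k$ Airy points to an interval $[-a, O(1)]$ with $a := \kappa k^{2/3} + Mk^{-1/3}$ for a parameter $M \asymp m \log(k/\delta)$: by Corollary \ref{C:point-locations} and Lemma \ref{L:var-airy}(ii), this fails with probability at most $c e^{-M/5}$, absorbable into the target bound. On the confinement event, the proof of Corollary \ref{C:de-jam-airy} gives $\mathbb{E}[R] \leq (C n a^2 (\delta')^3 (a + O(1)))^n \asymp (Cnk\delta^3)^n$, where $R$ is the number of ordered $2n$-tuples of distinct $\delta'$-close pairs in the interval. Since the existence of $n$ pairwise disjoint close pairs forces $R \geq n!\cdot 2^n$, Markov combined with Stirling yields a bound of order $(C'k\delta^3)^n$. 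Multiplying by $2\ell$ and absorbing polynomial factors gives a bound of the form $\ell k[cm\log(k/\delta)]^m\delta^{3\lfloor m/6\rfloor}$.

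The main technical obstacle is aligning the absorbed factors with the target form: the $\log(k/\delta)^m$ arises by tuning $M \asymp m\log(k/\delta)$ so that the confinement failure is smaller than $\delta^{3n}$; the $m^m$ comes from Stirling on the $n^n$ term in $(Cnk\delta^3)^n/(n!2^n)$; and the extra $k$ factor (beyond $k^n$) may require a supplementary union bound over the chain's starting line $i \in \{1, \dots, k-m+1\}$ together with a sharper density bound on the Airy kernel near the chain's location. The delicate part is this combinatorial bookkeeping; no new structural idea beyond the determinantal close-pair estimate of Proposition \ref{P:de-jam-gen} is required.
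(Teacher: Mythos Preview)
Your overall architecture---union over columns via stationarity, pigeonhole to a single time, confinement, then the determinantal close-pair estimate---matches the paper's. But there is a genuine gap in the confinement step, and it is not bookkeeping.

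You confine the first $k$ Airy points to a single interval $[-a,O(1)]$ with $a\sim\kappa k^{2/3}$, of length $\sim k^{2/3}$. Plugging into Corollary~\ref{C:de-jam-airy} with $\delta'=\delta k^{-1/3}$ gives $a^2(\delta')^3\cdot\text{length}\sim k^{4/3}\cdot\delta^3 k^{-1}\cdot k^{2/3}=k\delta^3$, hence a moment bound $(Cnk\delta^3)^n$ and a probability bound $\sim(C'k\delta^3)^n$. With $n=\lfloor m/6\rfloor$ this carries a factor $k^{\lfloor m/6\rfloor}$, whereas the target \eqref{E:m-want} has only a single $k$. For $m\ge 12$ your bound grows like $k^{n-1}$ relative to the target and cannot be absorbed into $[cm\log(k/\delta)]^m$, which is only polylogarithmic in $k$.

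The paper avoids this by doing exactly what you defer to the last paragraph: it \emph{first} takes a union bound over the chain's top line index $i\in\{m,\dots,k\}$ and the time index $j$ (this is the origin of the prefactor $\ell k$), and then, for each fixed $i$, confines only the $m$ consecutive points $\scrR_{i-m+1}(s_j),\dots,\scrR_i(s_j)$ to a \emph{short} interval $[-a_i,-a_i+\ell_i]$ with $a_i\sim\kappa i^{2/3}$ and $\ell_i\sim m i^{-1/3}\log k$ (up to the confinement parameter). This gives $a_i^2(\delta')^3\ell_i\lesssim m^{4/3}\delta^3\log^3(k/\delta)$ with \emph{no} residual power of $k$, and the moment bound per $(i,j)$ is $(cm^{4/3}\log^3(k/\delta)\,\delta^3)^{\lfloor m/6\rfloor}$, which fits inside $[cm\log(k/\delta)]^m\delta^{3\lfloor m/6\rfloor}$. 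The confinement parameter $b$ is tuned to $b\asymp \gamma m$ with $\gamma=-\log_k\delta$ so that the confinement-failure probability also lands below $\ell k\,\delta^{3\lfloor m/6\rfloor}$.

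In short: the ``sharper density bound near the chain's location'' you allude to is not a refinement of your argument---it is the argument. Without localizing the confinement window to width $O(mi^{-1/3})$ rather than $O(k^{2/3})$, the estimate is off by a power of $k$ that cannot be recovered.
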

	
	Proposition~\ref{P:delta-chains-i} follows as a special case of Proposition~\ref{P:delta-chains} in the case when $\de = k^{-\ga/4}, \ell = \ceil{t k^{2/3 + \ga}}$ for fixed $t, \ga$. With these parameter choices, for fixed $m$, as $k \to \infty$ the right hand side of \eqref{E:m-want} is equal to
	\begin{equation}
	\label{E:mkkk}
	O(\log^m(k)) k^{5/3 + \ga(1 - 3 \floor{m/6}/4)}.
	\end{equation}
	The exponent in $k$ is always negative if we take $m = \ceil{14(1 + 1/\ga)}$, and hence \eqref{E:mkkk} tends to $0$ with $k$ for this choice of $m$.
	
	
	In the proof, $c$ is a constant that may change from line to line.
	
	\begin{proof}
		Fix $m, k, \ell \in \N$ and $\de \in (0, 1]$ with $k \ge 2$. We may assume that $6 \le m \le k$ since by increasing $c$, the statement is trivial for $m < 6$, and the component sizes in $G_k(t, \ell, \de)$ are deterministically bounded above by $k$. If $M_k(t, \ell, \de k^{-1/3}) \ge m$, then there must be some $(i, j)$ with $i \ge m$ in the set
		$$
		\tilde S_k(\ell) = \{1, \dots, k\} \X \{0, \dots, \ell\}
		$$
		such that there are at least $m/2$ points in the set
		$$
		\{\scrL_{i - m + 1}(s_j), \dots, \scrL_{i-1}(s_j),\scrL_{i}(s_j) \}
		$$
		that are $\de k^{-1/3}$-jammed. Let $E_{k, b}$ be the event where
		$$
		|\scrL_i(s_j) + s_j^2 + \kappa i^{2/3}|<b i^{-1/3}\log (k + 1) \quad \text{for all} \quad (i, j) \in \tilde S_k(\ell).
		$$
		On the event $E_{k, b}$, we have that
		$$
		\{\scrL_{i - m + 1}(s_j) + s_j^2, \dots, \scrL_{i-1}(s_j) + s_j^2, \scrL_{i}(s_j) + s_j^2 \} \sset [-a_i, -a_i + \ell_i],
		$$
		for all $(i, j) \in \tilde S_k(\ell)$,	where
		$$
		a_i = \kappa i^{2/3} + b i^{-1/3}\log (k + 1) \quad\mathand \quad \ell_i = \kappa mi^{-1/3} + 2b (i-m + 1)^{-1/3}\log(k + 1).
		$$
		Therefore letting $J_{i, j}$ be the number of $\delta k^{-1/3}$-jammed points in the point process $\{\scrL_r(s_j) : r \in \N\}$
		in the set $[-a_i, -a_i + \ell_i]$, a union bound implies that
		\begin{equation}
		\label{E:M-L}
		\prob(M_k(t, \ell, \de) \ge m) \le \prob(E_{k, b}^c) + \sum_{(i, j) \in \tilde S_k(\ell)} \prob(J_{i, j} \ge m/2).
		\end{equation}
		We can bound $\p(E^c_{k, b})$ and $\p(J_{i, j} \ge m/2)$ by using that each of the processes
		$$
		\{\scrL_i(s_j) + s_j^2 : i \in \N\}
		$$
		is an Airy point process.
		Set $b = \max(\beta, 15 \gamma \floor{m/6})$, where $\beta$ is the constant from Corollary~\ref{C:point-locations} and $\ga = - \log_{k} \de$. A union bound implies that
		\begin{equation}
		\label{E:E-prob}
		\prob (E_{k, b}^c) \le c (\ell + 1)k^{1 - b/5} \le c \ell k^{1 - 3\gamma \floor{m/6}}.
		\end{equation}
		It remains to bound $\prob(J_{i, j} \ge m/2)$. By Markov's inequality,
		\begin{equation}
		\label{E:Lij1}
		\prob(J_{i, j} \ge m/2) \le \prob\lf( {\floor{J_{i, j}/3} \choose \floor{m/6}} > 0\rg) \le \frac{1}{\floor{m/6} !} \expt {\floor{J_{i, j}/3} \choose \floor{m/6}} \floor{m/6}!
		\end{equation}
		In the first inequality in \eqref{E:Lij1}, we used that $m \ge 6$. 
		By applying Corollary~\ref{C:de-jam-airy} and using the bound $n! \ge (n/e)^n$ for all $n$, the right hand side of \eqref{E:Lij1} is bounded above by
		\begin{equation}
		\label{E:Lij}
		(c a_i^2 \de^3 k^{-1} \ell_i)^{\floor{m/6}},
		\end{equation}
		where $c$ is a universal constant. Now, for a universal constant $c$, we have the bounds
		$$
		b \le c(1 + m\ga) \quad \mathand \quad (i - m + 1)^{-1/3} \le i^{-1/3} + \frac{(m-1)}{i-1},
		$$
		which gives that
		\begin{align*}
		&\ell_i \le cm(1 + \ga)\lf(i^{-1/3} + \frac{(m-1)}{i-1}\rg) \log(k + 1) \quad \mathand \\
		&a_i \le \kappa i^{2/3} + c(1 + m\ga) i^{-1/3}\log(k + 1).
		\end{align*}
		Using this we get that for every $i \in [m, k] \cap \N$, that
		$$
		a_i^2\ell_i \le  c m^{4/3} (1 + \ga^3) k \log^3 (k + 1)
		$$
		Using that $\ga = - \log_k \de$ and that $k \ge 2$ so $\log (k+1) \le c \log k$, we can bound the right hand side of~\eqref{E:Lij} above by
		$$
		\lf[c m^{4/3} \log^3 (k/\de) \de^3\rg]^{\floor{m/6}}.
		$$
		Combining this with the bound in~\eqref{E:E-prob} bounds the right hand side of~\eqref{E:M-L}. This completes the proof.
	\end{proof}
	
	\section{Preliminary modulus of continuity for parabolic Airy lines}\label{S:moduli}
	
	In this section, we will obtain a modulus of continuity estimate for the parabolic Airy line ensemble. This will be improved later as a consequence of the bridge representation. However, we need to prove a preliminary estimate in order to prove that theorem. When combined with the pointwise bounds in Corollary~\ref{C:point-locations}, this modulus of continuity estimate gives a bound on the maximum of the $k$th Airy line over an interval. This bound will be necessary for showing that boundary conditions don't propagate upwards when applying the Brownian Gibbs property in a small region.
	
	
	For this modulus of continuity, we will need a tail bound on the difference between values in the parabolic Airy line ensemble at two distinct times. The method of proof is a limiting version of the method used in Proposition~\ref{P:dyson-tails}. It is cleaner than that proof because we can appeal to the stationarity of the Airy line ensemble.
	
	\begin{lemma}
		\label{L:airy-tails}
		There are constants $c, d > 0$ such that for every $t \in \R, s \in (0, 1), a > 0,$ and $k \in \N$, the $k$th line $\scrL_k$ in the parabolic Airy line ensemble satisfies
		\begin{equation}
		\label{E:lemma61}
		\p(|\scrL_k(t) + t^2 - \scrL_k(t + s) - (t+s)^2| > a\sqrt{s}) \le e^{ck - da^2}.
		\end{equation}
		Moreover, when $k = 1$ the above bound holds for all $t \in \R, s \in (0, \infty)$ and $a > 0$.
	\end{lemma}
	
	\begin{proof} First, suppose that $a > 7 s^{3/2}$. Set $b = a\sqrt{s} + s^2$. By stationarity of the Airy line ensemble $\scrL(t) + t^2$ and the equality in distribution $\scrL(\cdot) \eqd \scrL(- \; \cdot)$, we have that
		\begin{align*}
		&\p(|\scrL_k(t) + t^2 - \scrL_k(t + s) - (t+s)^2| > a\sqrt{s}) \\
		= 2&\p(\scrL_k(0) - \scrL_k(s) - s^2 > a\sqrt{s}) \\
		= 2&\p(\scrL_k(0) - \scrL_k(s) > b).
		\end{align*}
		Now set $r = b/(8s)$.
		Since $a > 7s^{3/2}$, we have that $r > s$. By the Brownian Gibbs property for $\scrL$, conditionally on the set
		$$
		\{\scrL_i(0), \scrL_i(r) : i \in \{1, \dots, k\} \} \cup \{\scrL_{k+1}(q) : q \in [0, r]\},
		$$
		the restriction $\scrL|_{\{1, \dots, k\} \X[0, r]}$ is equal in distribution to $k$ Brownian bridges $(\tilde{B}_1, \dots, \tilde{B}_k)$ on $[0, r]$ of variance $2$ with endpoints given by $\scrL_i(0)$ and $\scrL_i(r)$ for all $i \in [1, k]$, conditioned to avoid  $\scrL_{k+1}$. By Lemma~\ref{L:monotone-gibbs}, $(\tilde{B}_1, \dots, \tilde{B}_k)$ stochastically dominates the sequence
		$$
		(B_1 + L, \dots, B_k + L),
		$$
		where $(B_1, \dots, B_k)$ is a Brownian $k$-melon with variance $2$ on $[0, r]$, and $L$ is the affine function with $L(0) = \scrL_k(0)$ and $L(r) = \scrL_k(r)$.
		We have that
		\begin{align*}
		\p(\scrL_k(0) - \scrL_k(s)  > b) &\le \p([B_k + L](0) - [B_k + L](s)  > b) \\
		&\le \p\lf(L(0) - L(s) > \frac{b}2 \rg)  + \p\lf(B_k(s) - B_k(0)< -\frac{b}2 \rg).
		\end{align*}
		We can then relate the first term above to the values of $\scrL_k$ at the times $0$ and $r = b/(8s)$. This gives
		\begin{align}
		\nonumber
		\p\lf(L(0) - L(s) \ge \frac{b}2 \rg) &= \p\lf(\scrL_k(0) - \scrL_k(r) \ge \frac{br}{2s} \rg) \\
		\nonumber
		&\le \p\lf(\scrL_k(r) \le -\kappa k^{2/3} - \frac{br}{4s}\rg) + \p\lf(\scrL_k(0) \ge -\kappa k^{2/3} + \frac{br}{4s}\rg) \\
		\label{E:Rkk}
		&=\p\lf(\scrL_k(r) + r^2 \le -\kappa k^{2/3} - \frac{b^2}{64s^2}\rg) + \p\lf(\scrL_k(0) \ge -\kappa k^{2/3} + \frac{b^2}{32s^2}\rg).
		\end{align}
		Here $\ka =  (3\pi/2)^{2/3}$, and for the final equality we have used that $r=b/(8s)$.
		We can bound the two probabilities in \eqref{E:Rkk} using Corollary~\ref{C:point-locations}, since $\scrL_k(r) + r^2$ is a $k$th Airy point for any $r$. Also, $\p\lf(B_k(s) - B_k(0) \le - b/2 \rg)$ can be bounded by Lemma~\ref{L:bridge-disc}. Combining these bounds gives that
		$$
		\p(\scrL_k(0) - \scrL_k(s)  > b) \le ce^{-db^2/s^2} + e^{ck -db^2/s}
		$$
		for constants $c$ and $d$. Using that $b \ge a\sqrt{s}$ proves the desired bound when $s \in (0, 1)$. For the case when $k = 1, s \ge 1,$ and $a > 7 s^{3/2}$ we can use the tail bounds on the Tracy-Widom random variable $\scrL_1(0)$ from Lemma~\ref{L:var-airy}(iii) instead of Corollary~\ref{C:point-locations} to bound~\eqref{E:Rkk}. This gives that
		$$
		\p(\scrL_1(0) - \scrL_1(s)  > b) \le ce^{-db^3/s^3} + e^{c -db^2/s}.
		$$
		Using the fact that $b \ge a\sqrt{s}$ and that  $a> 7s^{3/2}$ then gives the desired bound.
		
		Now suppose that $a < 7 s^{3/2}$. When $s < 1$, then $a < 7$, so by increasing $c$ we can guarantee that $e^{ck - da^2} > e^{c - 7d} \ge 1$, and the bound follows trivially. For the $k = 1$ and $s \ge 1$ case, Tracy-Widom tail bounds on the random variables $\scrL_1(t) + t^2$ and $\scrL_1(t+s) + (t+s)^2$ (Lemma~\ref{L:var-airy}(iii)) and a union bound imply that
		\begin{equation*}
		\p(|\scrL_1(t) + t^2 - \scrL_1(t + s) - (t+s)^2| > a\sqrt{s}) \le c e^{-d a^{3/2} s^{3/4}}
		\end{equation*}
		for some $c, d > 0$.
		Since $a < 7 s^{3/2}$, this is bounded above by $c e^{-d 7^{-1/2} a^{2}}$, completing the proof.
	\end{proof}
	
	As a consequence of Lemma~\ref{L:airy-tails}, we get the following modulus of continuity for the parabolic Airy line ensemble.
	
	\begin{theorem}
		\label{T:airy-mod}
		There are $c,d>0$ so that for any $k \in \N$, $a\in \R$, there exists a random constant $C_k$ so that
		$$
		|\scrL_k(t) + t^2 - \scrL_k(t + s) - (t+s)^2| \le C_k \sqrt{s} \log^{1/2}(2/s)
		$$
		for all $t, t + s \in [a, a + 1]$. Moreover
		$$
		\p(C_k > m) \le e^{ck - d m^2}  \qquad \text{for all }
		m > 0.$$
	\end{theorem}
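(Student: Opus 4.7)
The plan is to combine the increment tail bound of Lemma~\ref{L:airy-tails} with the generic modulus-of-continuity machinery of Lemma~\ref{L:levy-est} (applied in one spatial dimension). First, by stationarity of $\scrR$ in time, we may assume without loss of generality that $a=0$, so we work on the interval $T=[0,1]$, which has length $b=1$. The reduction by stationarity is the reason the tail bound on $C_k$ is uniform in the base point $a$.

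On $T$, Lemma~\ref{L:airy-tails} provides exactly the hypothesis required by Lemma~\ref{L:levy-est}: for every $t,t+s\in T$ with $s\in(0,1)$ and every $m>0$,
\[
\p\bigl(|\scrR_k(t+s)-\scrR_k(t)| > m\sqrt{s}\bigr)\;\le\; e^{ck-dm^2}\;=\;(e^{ck})\,e^{-dm^2}.
\]
This matches the template \eqref{E:tail-bd} in dimension $d=1$ with $\alpha_1=1/2$, $\beta_1=2$, $r_1=1$, $a=d$, and the leading constant $c$ of Lemma~\ref{L:levy-est} replaced by $e^{ck}$. Invoking Lemma~\ref{L:levy-est} then yields a random constant $C_k$ such that
\[
|\scrR_k(t+s)-\scrR_k(t)|\;\le\; C_k\, s^{1/2}\log^{1/\beta_1}\!\bigl(2/s\bigr)\;=\;C_k\sqrt{s}\,\log^{1/2}(2/s)
\]
for all $t,t+s\in[0,1]$, exactly the form required in the statement.

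The tail bound in Lemma~\ref{L:levy-est} reads $\p(C_k>m)\le c\cdot c_0\, e^{-c_1 m^{\beta_1}}$, where $c_0,c_1$ depend only on $\alpha_1,\beta_1,a$ and the ambient dimension. Substituting $c=e^{ck}$ and $\beta_1=2$ gives $\p(C_k>m)\le c_0\, e^{ck-c_1 m^2}$, and absorbing the constant $c_0$ into the exponential (by replacing $c$ by $c+\log c_0$, which only matters when $k\ge 1$) produces the desired bound $\p(C_k>m)\le e^{c'k-d'm^2}$ after renaming constants. Since everything is driven by the clean tail bound of Lemma~\ref{L:airy-tails} and the general chaining argument of Lemma~\ref{L:levy-est}, there is no genuine obstacle in this proof; the only point that requires a moment's care is checking that the $k$-dependence in the leading factor $e^{ck}$ passes through Lemma~\ref{L:levy-est} unchanged (which it does, since $c_0$ and $c_1$ there are explicitly independent of the leading constant $c$).
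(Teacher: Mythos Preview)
Your proposal is correct and follows exactly the approach the paper takes: the paper's proof of Theorem~\ref{T:airy-mod} simply states that it follows by combining Lemma~\ref{L:levy-est} and Lemma~\ref{L:airy-tails}. Your write-up fills in the details of that combination correctly, including the observation that the $k$-dependent prefactor $e^{ck}$ plays the role of the constant $c$ in Lemma~\ref{L:levy-est} and passes through unchanged because $c_0,c_1$ there are explicitly independent of it.
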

	
	\begin{proof}
		By Lemma~\ref{L:airy-tails}, we can apply Lemma~\ref{L:levy-est} to the $k$th Airy line $\scrL_k(t) + t^2$ on the interval $[a, a+1]$ with $d=1, b_1 = r_1 = 1, \al_1=1/2,$ and $\beta_1 = 2$.
	\end{proof}
	
	Theorem~\ref{T:airy-mod-i} follows from Theorem~\ref{T:airy-mod} in the $a=0$ case.
	
	\begin{proof}[Proof of Theorem~\ref{T:airy-mod-i}] For $t, t + s \in [0, 1]$, we have the deterministic bound
		$$
		|\scrL_k(t) - \scrL_k(t+s)| \le |\scrL_k(t) + t^2 - \scrL_k(t+s) - (t + s)^2| + 4\sqrt{s}\log^{1/2}(2/s), 
		$$
		so
		$$
		|\scrL_k(t) - \scrL_k(t+s)| \le (C_k + 4) \sqrt{s} \log^{1/2}(2/s),
		$$
		with $C_k$ as in Theorem~\ref{T:airy-mod}. The constant $C_k + 4$ satisfies the same tail bounds as $C_k$ with a larger value of $c$ and a smaller value of $d$. This yields Theorem~\ref{T:airy-mod-i}.
	\end{proof}

	We record the the following corollary of Theorem~\ref{T:airy-mod}, which confines lines in the parabolic Airy line ensemble.
	
	\begin{corollary}
		\label{C:k-line-bd}
		There exist $\beta ,c> 0$ such that for any $k \in \N$, any $t > 0$, and any $m > \beta \log (k + 1)$, we have that
		$$
		\p\lf(\sup_{s \in [0, t]} |\scrL_k(s) + s^2 + \kappa k^{2/3}| > m k^{-1/3} \rg) \le (t + 1)ce^{-m/10}.
		$$
	\end{corollary}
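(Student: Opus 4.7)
The plan is to combine the pointwise bound from Corollary \ref{C:point-locations} (via stationarity of $\scrR$) with the modulus of continuity in Theorem \ref{T:airy-mod} through a chaining argument. First, I would cover $[0,t]$ by $N=\lceil t\rceil$ unit intervals $[j,j+1]$ and, on each of them, apply Theorem \ref{T:airy-mod} to obtain a random constant $C_k^{(j)}$ with $\p(C_k^{(j)}>M)\le e^{ck-dM^2}$ controlling oscillations on that interval. Within each unit interval I would place a grid of spacing $\Delta$ (for concreteness, $\Delta$ of order $k^{-2}$), and at each grid point $s_\ell$ invoke stationarity to write $\scrR_k(s_\ell)\eqd \scrA_k(0)$, so that Corollary \ref{C:point-locations} gives $\p(|\scrR_k(s_\ell)+\kappa k^{2/3}|>(m/2)k^{-1/3})\le c\,e^{-m/10}$ provided $m/2>\beta\log(k+1)$.

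The main estimate then comes from splitting, for any $s\in[0,t]$ with nearest grid point $s_\ell$,
\[
|\scrR_k(s)+\kappa k^{2/3}|\le |\scrR_k(s_\ell)+\kappa k^{2/3}|+C_k^{(j)}\sqrt{\Delta}\,\log^{1/2}(2/\Delta).
\]
I would choose the threshold $M$ for the modulus constants to be of order $\sqrt{k}$ (enlarged to absorb $\log(t+1)$ via a union bound over the $N$ intervals), which makes $(t+1)e^{ck-dM^2}\le e^{-m/10}$; then $\Delta$ is taken small enough (the scale $k^{-2}$ works with room to spare) so that $M\sqrt{\Delta\log\Delta^{-1}}\le (m/2)k^{-1/3}$, i.e.\ the modulus contribution is absorbed into the target $mk^{-1/3}$.

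Finally I would apply a union bound over the $(t+1)/\Delta$ grid points and the $N$ modulus events. The resulting polynomial-in-$k$ factors (roughly $k^{5/3}\log k$, or $k^2$ with the crude choice) are absorbed by requiring $m>\beta\log(k+1)$ with $\beta$ chosen large enough; concretely, for $\beta$ sufficiently large one has $k^{O(1)}e^{-m/5}\le e^{-m/10}$, so the total probability becomes $(t+1)c\,e^{-m/10}$, as desired. I would finish by observing that for $m$ in a neighborhood of $\beta\log(k+1)$ the constant $c$ can be further inflated to cover the remaining regime.

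The main obstacle is the balancing act: Theorem \ref{T:airy-mod} forces $M\gtrsim\sqrt{k}$ because of its $e^{ck}$ prefactor, which then forces the grid to be fine enough ($\Delta$ polynomially small in $k$) to keep $M\sqrt{\Delta\log\Delta^{-1}}$ below $mk^{-1/3}$ even when $m$ is only logarithmic in $k$; one must verify that the resulting polynomial-in-$k$ union-bound factor can be absorbed into $e^{-m/10}$ by taking $\beta$ appropriately large, without letting the required size of $\beta$ blow up with $t$. Choosing the size of $M$ so that its defining tail already carries the $\log(t+1)$ term (rather than paying for it separately in the union bound) is what keeps the dependence on $t$ linear in the final estimate.
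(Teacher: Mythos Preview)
Your proposal is correct and follows essentially the same approach as the paper: reduce to unit intervals, place a grid of spacing on the order of $k^{-2}$, control the grid points via stationarity and Corollary~\ref{C:point-locations}, and control the oscillation between grid points via Theorem~\ref{T:airy-mod}. The paper is terser---it first proves the $t\le 1$ case and then invokes stationarity and a union bound for larger $t$---so your worry about folding $\log(t+1)$ into $M$ is unnecessary: simply paying the factor $\lceil t\rceil$ in the union bound over unit intervals already gives the linear dependence $(t+1)$, and avoids any interaction between $t$ and the choice of $M$.
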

	
	In the proof, $c, d,$ and $\beta$ are positive constants that may change from line to line.
	\begin{proof}
		It suffices to proves the statement for $t = 1$, as for larger $t$, we can simply use a union bound and the stationarity of $\scrL_k(s) + s^2$, and for smaller $t$, the statement follows by modifying the constant $c$.
		Let $\Pi = \{0, 1/k^{2}, 2/k^{2}, \dots, 1\}$.
		Letting $C_k$ be as in Theorem~\ref{T:airy-mod} for the interval $[0, 1]$, we have that
		\begin{equation*}
		\sup_{s \in [0, t]} |\scrL_k(s) + s^2 + \kappa k^{2/3}| \le \sup_{s \in \Pi} |\scrL_k(s) + s^2 + \kappa k^{2/3}| + C_k \frac{\log^{1/2}(2k^2)}k,
		\end{equation*}
		and so the probability in the statement of the corollary is bounded above by
		\begin{equation}
		\label{E:s-0t}
		\p\lf(\sup_{s \in \Pi} |\scrL_k(s) + s^2 + \kappa k^{2/3}| > \frac{2m k^{-1/3}}3\rg) + \p\lf(C_k > \frac{mk^{2/3}}{3\log^{1/2}(2k^2)}\rg).
		\end{equation}
		When $m > \beta \log (k + 1)$, the first term on the right hand side of~\eqref{E:s-0t} is bounded above by $c k^2 e^{-2m/15}$ by Corollary~\ref{C:point-locations} and a union bound. The second term on the right hand side of~\eqref{E:s-0t} is bounded above by $e^{ck - dkm^2}$ by Theorem~\ref{T:airy-mod}. This uses that for all $k \in \N$, $k^{2/3}/(3\log^{1/2}(2k^2)) \ge d k^{1/2}$ for some $d > 0$. Now, as long as $\beta$ is large enough, we have that
		$$
		ck^2 e^{-2m/15} + e^{ck - dkm^2}
		$$
		is bounded above above by $ce^{-m/10}$ for all $k \in \N, m > \beta \log (k+1)$. This gives the desired bound.
	\end{proof}
	
	Corollary~\ref{C:k-line-bd} immediately implies Theorem~\ref{T:lln-i}.
	\begin{proof}[Proof of Theorem~\ref{T:lln-i}]
		By stationarity of the Airy line ensemble $\scrL(t) + t^2$ and Corollary~\ref{C:k-line-bd}, we have
		$$
		\p\lf( \sup_{s \in [-t_k, t_k]} |\scrL_k(t) + t^2 + \ka k^{2/3} | > \ep \ell_k k^{-1/3} \rg) \le (2t_k + 1)ce^{-\ep \ell_k/10}
		$$
		for a constant $c>0$.
		Since $t_k = e^{o(\ell_k)}$ and $\log k = o(\ell_k)$, the right hand side above is summable in $k$ for every $\ep > 0$. Hence Theorem~\ref{T:lln-i} follows by the Borel-Cantelli lemma.
	\end{proof}
	
	\section{The bridge representation}\label{S:bridge}
	
	In this section we prove the bridge representation, Theorem~\ref{T:bridge-rep-i}. After proving the bridge representation, we prove a small proposition that shows that at most locations, the bridge representation samples bridges without any nonintersection condition.
	
	
	We begin with a definition. Let $t>0,\ell\in \N$ and let $s_j=jt/\ell$ as in Section~\ref{S:airy-point}. Recall that $G_k(t, \ell, \delta)$ is a random graph  on the set $\{1, \dots, k\} \X\{1, \dots, \ell\}$ where vertices $(i, j)$ and $(i + 1, j)$ are connected if either
	$$
	|\scrL_i(s_{j-1}) - \scrL_{i+1}(s_{j-1})| \le \de \qquad \mathor \qquad |\scrL_i(s_j) - \scrL_{i+1}(s_{j})| \le \de.
	$$
	\begin{definition}
		\label{D:bridge-rep} Let $k \in \N$. The \textbf{bridge representation} $\mathcal{B}^k(t, \ell, \de)$ of the parabolic Airy line ensemble is a line ensemble $(\scrB^k_1, \dots, \scrB^k_{2k})$ with domain $[0, t]$ constructed as follows. For every $(i, j) \in \{1, \dots, 2k\} \X \{1, \dots, \ell\}$, sample an independent Brownian bridge $B_{i, j}:[s_{j-1}, s_{j}] \to \R$ with
		$$
		B_{i, j} (s_{j-1}) = \scrL_{i}(s_{j-1}) \quad \mathand \quad B_{i, j} (s_j) = \scrL_{i}(s_j),
		$$
		where the bridges $B_{i, j}$ and $B_{i', j}$ are conditioned not to intersect if $(i, j)$ and $(i',j)$ are in the same component of $G_{2k}(t, \ell, \delta)$. We then define the $i$th line $\scrB^k_i:[0, t]
		\to \R$ of the line ensemble $\mathcal{B}^k(t, \ell, \de)$ by concatenating the bridges $B_{i, j}$. That is, $\scrB^k_i|_{[s_{j-1}, s_j]} = B_{i, j}$ for all $j \in \{1, \dots, \ell\}$.
	\end{definition}
	
	The next theorem is a restatement of our main theorem, Theorem~\ref{T:bridge-rep-i}. It shows that $\mathcal B$ and $\mathcal L$ are close in law when restricted to the set $\{1, \dots, k\} \X [0, t]$ in the appropriate parameter range.

	\begin{theorem}
		\label{T:bridge-rep} There exist constants $c, d > 0$ such that the following holds for all $k \ge 3$, $t > 0$, $\ga \in (c \log(\log k)/\log k, 2]$ and $\ell \ge tk^{2/3 + \ga}$. The total variation distance between the laws of $\scrB^k(t, \ell, k^{-1/3 - \ga/4})|_{\{1, \dots, k\} \X [0, t]}$ and $\scrL|_{\{1, \dots, k\} \X [0, t]}$ is bounded above by
		$$
		\ell e^{-d \ga k^{\ga/12}}.
		$$
	\end{theorem}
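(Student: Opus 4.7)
The plan is to couple $\scrB$ and $\scrA$ through their common grid values by appealing to the Brownian Gibbs property. Condition on the grid values $\{\scrA_i(s_j):1\le i\le 2k,\ 0\le j\le\ell\}$ together with the bottom path $\scrA_{2k+1}|_{[0,t]}$; Definition \ref{D:brownian-gibbs} then realizes $\scrA|_{\{1,\dots,2k\}\times[0,t]}$ as $2k\ell$ independent variance-$2$ Brownian bridges between consecutive grid values conditioned on (i) full pairwise non-intersection and (ii) staying above $\scrA_{2k+1}$. By Definition \ref{D:bridge-rep}, $\scrB$ is the same family of bridges, but conditioned only on within-component non-intersection inside each component of $G_{2k}(t,\ell,k^{-1/3-\gamma/4})$, a strict weakening of (i)+(ii). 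Hence $\scrA$'s conditional law is $\scrB$'s further conditioned on the event $F$ that (i$'$) between-component non-intersection and (ii) both hold, and the elementary identity $d_{TV}(\mu,\mu(\cdot\mid F))=\mu(F^c)$ together with the fact that restricting to the top $k$ lines cannot increase total variation reduces the theorem to proving
\[
\mathbb{E}\,\mathbb{P}_{\scrB}(F^c\mid\text{grid},\scrA_{2k+1})\;\le\;\ell\,e^{-d\gamma k^{\gamma/12}}.
\]

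Set $T=t/\ell\le k^{-2/3-\gamma}$ and $\de=k^{-1/3-\gamma/4}$, so that $\de^2/T\ge k^{\gamma/2}$. Fix an interval $[s_{j-1},s_j]$ and an adjacent pair $B_{i,j},B_{i+1,j}$ whose indices lie in different components of $G_{2k}$; by the definition of the graph, both endpoint gaps of this pair exceed $\de$. For independent bridges the standard crossing formula bounds the intersection probability by $\exp(-\de^2/(2T))\le\exp(-k^{\gamma/2}/2)$. Within-component conditioning can push these bridges towards each other, but on the event $\{M_{2k}(t,\ell,\de)\le m_\ast\}$, Proposition \ref{P:bridge-bd}, applied with its parameter $k$ replaced by $m_\ast$, bounds the maximum deviation of each conditioned bridge from the linear interpolation between its endpoints by $O(\sqrt{m_\ast T\log T^{-1}})$ with Gaussian-tailed random constants. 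For the choice $m_\ast=\lfloor k^{\gamma/12}\rfloor$ this deviation is $o(\de)$ precisely under the hypothesis $\gamma\ge c\log\log k/\log k$, so the conditional crossing probability stays of order $\exp(-\Omega(k^{\gamma/2}))$, and a union bound over the $O(k\ell)$ adjacent pairs contributes only $k\ell\exp(-\Omega(k^{\gamma/2}))$. For the complementary event, Proposition \ref{P:delta-chains} (with $k$ replaced by $2k$) yields $\mathbb{P}(M_{2k}>m_\ast)\le \ell k[cm_\ast\log k]^{m_\ast} k^{-\gamma m_\ast/8}$; the choice $m_\ast=\lfloor k^{\gamma/12}\rfloor$ balances these factors, giving the bound $\ell e^{-d\gamma k^{\gamma/12}}$, which is the dominant term.

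For condition (ii), the Airy point-location bounds of Corollary \ref{C:point-locations} show that the grid gap $\scrA_{2k}(s_j)-\scrA_{2k+1}(s_j)$ is typically of order $k^{-1/3}\gg\sqrt{T}$, while both the $(2k)$-th bridge (by Proposition \ref{P:bridge-bd}) and $\scrA_{2k+1}|_{[s_{j-1},s_j]}$ (by Corollary \ref{C:k-line-bd}) fluctuate only on the order of $\sqrt{T}=k^{-1/3-\gamma/2}$; the probability of the $(2k)$-th bridge dipping below $\scrA_{2k+1}$ is thus super-exponentially small and negligible compared to the component-size term. Combining the two estimates completes the proof. The main obstacle is the balancing in the previous paragraph: the push from within-component conditioning grows with the cutoff $m_\ast$, while $\mathbb{P}(M_{2k}>m_\ast)$ decreases in $m_\ast$ via Proposition \ref{P:delta-chains}; the exponent $\gamma k^{\gamma/12}$ is exactly the balance point between these two competing terms, and the hypothesis $\gamma\ge c\log\log k/\log k$ is what is needed to keep the push well below $\de$ in spite of the logarithmic factor in Proposition \ref{P:bridge-bd}.
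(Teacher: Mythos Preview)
Your reduction contains a genuine gap at step (ii), and it is not a technicality: the conclusion you are trying to prove there is actually false. You reduce the theorem to showing $\mathbb{P}_{\scrB}(F^c)\le\ell e^{-d\gamma k^{\gamma/12}}$, where $F$ is the event that the full $2k$-line bridge ensemble $\scrB$ satisfies all the avoidance conditions of $\scrA$, including staying above $\scrA_{2k+1}$. But $\mathbb{P}_{\scrB}(F^c)$ is \emph{not} small. The gap $\scrA_{2k}(s_j)-\scrA_{2k+1}(s_j)$ is not ``typically of order $k^{-1/3}$'' in a uniform sense: it is $\de$-jammed with probability of order $k^{-3\gamma/4}$ per time point, and with $\ell\sim k^{2/3+\gamma}$ time points you expect many bad columns. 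Moreover, the only modulus-of-continuity estimate available for $\scrA_{2k+1}$ at this stage is Theorem~\ref{T:airy-mod}, whose constant has an $e^{ck}$ tail prefactor, so you cannot assert that $\scrA_{2k+1}$ fluctuates only on the scale $\sqrt{T}$ over a grid interval. This is exactly what the caption of Figure~\ref{fig:Bridge} warns about: the lowest bridges of $\scrB$ \emph{do} cross $\scrA_{2k+1}$ with appreciable probability, and the theorem is only about the top $k$ lines precisely because the bottom $k$ serve as a buffer.

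The missing idea is Lemma~\ref{L:key-bridge}. Rather than trying to show $\scrB|_{\{1,\dots,2k\}}\approx\scrA|_{\{1,\dots,2k\}}$ (which fails), the paper builds a hybrid ensemble $\scrC^k$ column by column: on each $[s_{j-1},s_j]$ there is a random level $J_j\in(k,2k]$ such that the lines $J_j,\dots,2k$ are resampled \emph{with} the lower boundary $\scrA_{2k+1}$ (so the boundary is honestly incorporated there), while lines $1,\dots,J_j-1$ are taken from the bridge representation. Lemma~\ref{L:key-bridge} produces $J_j$ with the property that the top resampled line at level $J_j$ stays at least $\de$ below both endpoints $\scrA_{J_j-1}(s_{j-1}),\scrA_{J_j-1}(s_j)$, so the boundary effect is halted before reaching level $k$. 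Then one bounds $d_{TV}(\scrA,\scrC^k)$ by the probability that $\scrC^k$ self-intersects, and on $\{1,\dots,k\}$ the ensemble $\scrC^k$ coincides with $\scrB$. Your treatment of the between-component crossings (Proposition~\ref{P:bridge-bd} on components of size $\le m_\ast$, balanced against Proposition~\ref{P:delta-chains}) is essentially how the paper handles the upper part of $\scrC^k$, but without the Lemma~\ref{L:key-bridge} mechanism to absorb the lower boundary you have no control over $F^c$.
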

	
	Theorem~\ref{T:bridge-rep} can be thought of as giving a quantitative version of the Brownian Gibbs property. In particular, it allows us to apply the Brownian Gibbs property to large patches in $\scrL$ without having to worry about boundary conditions or long-range interactions between lines (e.g. for a fixed value of $\ga$, the components in $G_k(t_k, t k^{2/3 + \ga}, k^{-1/3 - \ga/4})$ are of bounded size as we take $k \to \infty$ by Proposition~\ref{P:delta-chains-i}).
	
	
	The lower bound on $\ga$ in Theorem~\ref{T:bridge-rep} is optimal up to lowering the constant $c$; having a nearly optimal parameter range is essential in applications (e.g. Theorem~\ref{T:mod-cont-i} and the application of Theorem~\ref{T:bridge-rep} in~\cite{DOV}). The upper bound on $\ga$ in Theorem~\ref{T:bridge-rep} is quite artificial and is chosen purely for the sake of proof. For any $\ga > c \log \log (k)/ \log k$, we can get a similar total variation bound, though the exponent $\ga/12$ will be replaced by $b \ga$ for a constant $b$ that decays as $\ga \to \infty$. The restriction that $k \ge 3$ is imposed purely to ensure that $\log \log k/ \log k > 0$. See the introduction for discussion about the heuristics behind the parameter ranges.
	
	
	
	The first step in the proof of Theorem~\ref{T:bridge-rep} is to show that when we apply the Brownian Gibbs property to $\scrL$ on a region of the form $\{1, \dots, 2k\} \X[0, k^{-2/3 - \ga}]$, that the lower boundary condition coming from the line $\scrL_{2k + 1}$ only affects nearby lines.
	
	\begin{lemma}
		\label{L:key-bridge}
		Define the $\sigma$-fields
		\begin{align*}
		\scrF(k, t)&= \sig\{\scrL_i(s) : (i, s) \in \{1, \dots, 2k\} \X\{0,t\} \;\text{ or }\; i = 2k+1, s \in [0, t]\} \quad \mathand \\
		\scrG&= \sig\{\scrL_i(s) : (i, s) \in \{1, \dots, 2k\} \X\{0,t\}\}.
		\end{align*}
		Then for every $k \ge 3$, $\ga \in [c \log \log k/ \log k, 2]$ and $t \in (0, k^{-2/3 - \ga}]$, there exists a $\scrG$-measurable random variable $J\in (k,2k]$ such that the following holds. Here $c > 0$ is a universal constant.
		
		
		Let $A = (A_J, \dots, A_{2k})$ be a sequence of $2k +1 - J$ random functions, such that conditionally on $\scrF(k, t)$, the sequence $A$ consists of $2k +1 - J$ independent Brownian bridges of variance $2$ with endpoints $A_i(0) = \scrL_i(0)$ and $A_i(t) = \scrL_i(t)$ conditioned not to intersect each other or the lower boundary $\scrL_{2k + 1}$. Let $L_{J-1}$ be the line with $L_{J-1}(s) = \scrL_{J-1}(s)$ for $s = 0, t$. Then with (unconditional) probability at least $1 - e^{- d \ga k^{\ga/12}}$, we have the following conditional probability bound:
		\begin{equation}
		\label{E:cond-J}
		\prob \Big(\min_{s\in[0, t]}\lf( L_{J-1}(s) - A_{J}(s) \rg) \ge k^{-1/3 - \gamma/4}\;\; \Big| \; \scrF(k, t)\Big) \ge 1 - e^{- d \ga k^{\ga/4}}
		\end{equation}
		Here $d$ is a universal constant.
	\end{lemma}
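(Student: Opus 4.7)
The proof splits into choosing the index $J$ from $\scrG$-measurable endpoint data and then bounding the conditional distribution of $A_J$ given $\scrF$.

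For the first part I apply Proposition \ref{P:delta-chains} to the chain graph $G_{2k}(t,1,\de(2k)^{-1/3})$ with $\de$ a small multiple of $k^{-\ga/4}$. The hypothesis $\ga\ge c\log\log k/\log k$ is exactly what allows the combinatorial factor $[cm\log(k/\de)]^m$ in that proposition to be absorbed into $\de^{3\lfloor m/6\rfloor}$; choosing $m = O(1+1/\ga)$ yields that every connected component of the chain graph has size at most $m$ except on a $\scrG$-event of probability at most $e^{-d\ga k^{\ga/12}}$. On the complementary good event I take $J$ to be the smallest $J\in(k,2k]$ with $\scrA_{J-1}(s)-\scrA_J(s) > 2k^{-1/3-\ga/4}$ at both $s=0$ and $s=t$; such a $J$ exists because the chains are short, and it is clearly $\scrG$-measurable.

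Next I use the Airy modulus of continuity (Theorem \ref{T:airy-mod}) applied to $\scrA_J$ and $\scrA_{J-1}$ on $[0,t]$: the oscillation of each line is at most $Ck^{-1/3-\ga/2}\sqrt{\log k}$, which is $o(k^{-1/3-\ga/4})$ precisely under the lower bound on $\ga$. With additional high $\scrG$-probability this upgrades the two endpoint gaps to a uniform gap $\min_s\scrA_{J-1}(s) - \max_{s'}\scrA_J(s') > 1.9\,k^{-1/3-\ga/4}$. The statement thus reduces to showing
\[
\prob\!\lf(\max_{r\in[0,t]}(A_J(r) - L_J(r)) > 0.9\,k^{-1/3-\ga/4}\,\Big|\,\scrF\rg) \le e^{-d\ga k^{\ga/4}},
\]
where $L_J$ is the linear interpolation through $(\scrA_J(0),\scrA_J(t))$.

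The core estimate uses the Brownian Gibbs property of $A$ applied to line $J$ alone: conditionally on $A_{J+1}|_{[0,t]}$ and $\scrF$, $A_J$ is a variance-$2$ Brownian bridge $B$ from $\scrA_J(0)$ to $\scrA_J(t)$ conditioned to stay above $A_{J+1}$. On the event $\{A_{J+1}(r) \le L_{J+1}(r) + u\}$, monotonicity (Lemma \ref{L:monotone-gibbs}) bounds $A_J$ above by a bridge above the constant curve $L_{J+1} + u$, giving
\[
\prob(\max(A_J - L_J) > v \mid \scrF) \le \frac{\prob(\max(B - L_J) > v)}{\prob(B > L_{J+1} + u)} + \prob(\max(A_{J+1} - L_{J+1}) > u \mid \scrF).
\]
With $v = 0.9\,k^{-1/3-\ga/4}$ and $u = 0.9\,k^{-1/3-\ga/4}$, and using $t \le k^{-2/3-\ga}$, the first-term numerator is $\exp(-v^2/(4t)) \le \exp(-c\,k^{\ga/2})$ by the bridge reflection principle, and the denominator is at least $1/2$ by the uniform gap plus a free-bridge supremum bound; this already dominates $e^{-d\ga k^{\ga/4}}$. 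The second term is the same type of bound applied one level down.

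The main obstacle is iterating this estimate downward through the ensemble. The iteration propagates cleanly through levels with large adjacent endpoint gaps, which by Proposition \ref{P:delta-chains} form the vast majority of indices. At the isolated small-gap clusters of size at most $m_0 = O(1+1/\ga)$, one instead groups the cluster and applies Proposition \ref{P:bridge-bd}: for a cluster of $m \le m_0$ bridges its bound $\sqrt{m t \log(1+1/t)}$ is indeed $o(k^{-1/3-\ga/4})$ under our hypotheses, and its proof via Lemmas \ref{L:monotone-gibbs} and \ref{L:bridge-disc} extends to this setting because the monotonicity reductions only push endpoints upward and remain compatible with the far-below lower boundary $\scrA_{2k+1}$, which sits $\Theta(k^{2/3})$ beneath the cluster. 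At the base case (level $2k$), removing the upper boundary $A_{2k-1}$ by monotonicity bounds $A_{2k}$ above by a free bridge from $(\scrA_{2k}(0),\scrA_{2k}(t))$ conditioned to stay above $\scrA_{2k+1}$, whose upper-tail deviation satisfies the same Gaussian bound. Summing the $O(k)$ iteration losses yields total probability at most $2k\,\exp(-c\,k^{\ga/2})$, which is absorbed into $e^{-d\ga k^{\ga/4}}$ exactly when $\ga \ge c\log\log k/\log k$.
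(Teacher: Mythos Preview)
There is a genuine gap in the iteration. Your ``denominator $\ge 1/2$'' step requires the free bridge $B$ from $\scrA_J(0)$ to $\scrA_J(t)$ to lie above $L_{J+1}+u$ with decent probability, which in turn needs the endpoint gaps $\scrA_J(s)-\scrA_{J+1}(s)$ to exceed $u=0.9\,k^{-1/3-\ga/4}$ at both $s\in\{0,t\}$. But you chose $J$ as the \emph{first} level with a large gap \emph{above} it; nothing controls the gap between $J$ and $J+1$, which may be $\de$-jammed, and then $B(0)<L_{J+1}(0)+u$ so the denominator is zero. The same obstruction recurs at every subsequent small-gap level. Your fix via Proposition~\ref{P:bridge-bd} does not apply: that proposition is for nonintersecting bridges with no lower boundary, whereas in your iteration the floor for a cluster $(A_i,\dots,A_{i+m-1})$ is the not-yet-controlled curve $A_{i+m}$, not the far-away $\scrA_{2k+1}$; and once you replace that floor by $L_{i+m}+u$, the cluster endpoints may again start below it. Separately, your appeal to Theorem~\ref{T:airy-mod} for the oscillation of $\scrA_{J-1},\scrA_J$ fails: its tail reads $\p(C_k>m)\le e^{ck-dm^2}$, so at level $J\sim k$ one needs $m\sim\sqrt{k}$, giving oscillation $\sqrt{k}\cdot k^{-1/3-\ga/2}\sqrt{\log k}$, which is not $o(k^{-1/3-\ga/4})$ for any $\ga\le 2$.

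The paper's argument sidesteps both issues by replacing the level-by-level peeling with one global monotone coupling: it pushes \emph{all} the endpoints $\scrA_J(s),\dots,\scrA_{2k}(s)$ upward to new points $h_J>\dots>h_{2k}$ (and $h_i'$ at time $t$) manufactured to have uniform spacing $b\sqrt{t}$ with $b=k^{\ga/6}$. The chain bound (your condition (iii)) guarantees this shift moves each point by at most $mb\sqrt{t}\le k^{-1/3-\ga/4}$. Lemma~\ref{L:monotone-gibbs} then dominates $A_J$ in one stroke by the top line of a bridge ensemble from $(h_i)$ to $(h_i')$ above $\scrA_{2k+1}$, and a single rejection-sampling step (free bridges all stay in channels of width $b\sqrt{t}/2$, probability $\ge 1-cke^{-db^2}$) handles both the mutual non-intersection and the floor. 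No Airy modulus of continuity is needed; only the point-location bounds of Corollaries~\ref{C:point-locations} and~\ref{C:k-line-bd} enter.
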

	
	See Figure~\ref{fig:LemmaPropogation} for an illustration of the lemma and the main idea of the proof.

	\begin{figure}%
		\centering
		\begin{subfigure}[t]{2.5cm}
			\centering
			\includegraphics[width=2.5cm]{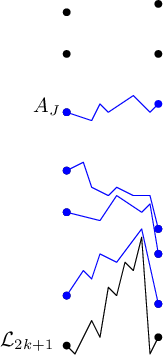}
			\caption{}
		\end{subfigure}
		\qquad \qquad \qquad \qquad
		\begin{subfigure}[t]{2.5cm}
			\centering
			\includegraphics[width=2.5cm]{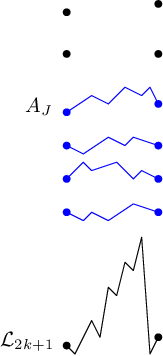}
			\caption{}
		\end{subfigure}
		\caption{Lemma~\ref{L:key-bridge} essentially shows that on the right scale, it is highly unlikely that the effect of the lower boundary condition is propagated to the top $k$ lines when applying the Brownian Gibbs property on the top $2k$ lines. More precisely, there exists a $J > k$ (the level of the top blue points) for which the bridge connecting the points at level $J$ stays well below the two black boundary points at level $J - 1$. This is illustrated in Figure~\ref{fig:LemmaPropogation}(a). Figure~\ref{fig:LemmaPropogation}(b) illustrates how this is proven. We shift some blue points up so that with high probability, independent Brownian bridges between all the blue points will not intersect each other or the lower boundary. This shifting process stabilizes by the point $J$. Therefore we can conclude that the bridge at level $J$ is stochastically dominated by a Brownian bridge off of a low probability event.}%
		\label{fig:LemmaPropogation}%
	\end{figure}
	
	The proof of Lemma~\ref{L:key-bridge} proceeds in stages. We first define an explicit $\scrF(k, t)$-measurable event $E_{b, m}$ dependent on two parameters $m$ and $b$ that we will set later along with a random index $J$. Next, we show that conditionally on $E_{b, m}$, the probability in~\eqref{E:cond-J} can be bounded below in terms of $b$ and $m$. Finally we bound the probability of $E_{b, m}$, and then choose particular values of $b$ and $m$ to prove the lemma.
	
	\begin{proof}	
		Fix $k \in \N$ and $\ga \in [c \log \log k/ \log k, 2]$ for some large constant $c$. Exactly how large we need to take $c$ for the lemma to hold will be made clear in the proof.
		
		
		\textbf{Step 1: Defining $E_{b, m}$ and $J$.} \qquad Let $m \in \N$ be such that $m < k/4$, and let $b\in(0,k^{\gamma/4}/m]$. Set $a_i(s) = -\kappa i^{2/3} - s^2$ (an approximation to $\E \scrL_i(s)$). Let $E_{b, m}$ be the event where:
		\begin{enumerate}[label=(\roman*)]
			\item $\scrL_{2k + 1}(s) \le a_{2k + 1}(s) + bk^{-1/3}$ for all $s \in [0, t]$.
			\item $\scrL_\floor{3k/2}(s) \ge  a_\floor{3k/2}(s) - bk^{-1/3}$ for $s =0, t$.
			\item The graph $G_{2k}(t, 1, 5k^{-1/3-\ga/4})$ has no component of size greater than $m$.
		\end{enumerate}
		We define
		$$
		J = \max \Big\{j \le \frac{3k}2 : \min_{s\in\{0,t\}}\scrL_{j-1}(s) - \scrL_{j}(s) \ge 4k^{-1/3-\ga/4}  \Big\},
		$$
		setting $J = 1$ if the set above is empty.
		On the event $E_{b, m}$, condition (iii) implies that the above set is nonempty and that $J > k$ (this uses that $m < k/4$). To show that~\eqref{E:cond-J} is large on $E_{b, m}$, we will set up a stochastic domination argument.
		
		
		\textbf{Step 2: Separating out Airy points.} \qquad To begin, we would like to find points $h_i$ for $i=1,\ldots ,2k$ that dominate $\scrL_i(0)$ and have spacing at least $b\sqrt{t}$. See Figure~\ref{fig:hi} for a visual aid for this step. First define $h_{2k+1} = a_{2k +1}
		(0) + bk^{-1/3}$. For $i \le 2k + 1$, recursively define
		$$
		h_{i-1} =
		\max(h_i + b\sqrt{t} , \scrL_{i-1}(0)).
		$$
		We claim that
		\begin{equation}
		\label{E:J-bd}
		h_J \le \scrL_{J-1}(0) - 3k^{-1/3 - \ga/4}
		\end{equation}
		on the event $E_{b, m}$.
		To show this, set
		$$
		I = \{ i \in [J, 2k] : h_i = \scrL_i(0)\}.
		$$
		We first show that $I$ is nonempty with $ \sup I \ge 3k/2$.
		If $\sup I < 3k/2$, then
		\begin{equation}
		\label{E:h-3k}
		\begin{split}
		h_{\floor{3k/2}} &= a_{2k + 1}(0) + b k^{-1/3} + \lf(2k + 1 - \floor{3k/2}\rg)\sqrt{t} b \\
		&\le a_{2k + 1}(0) + k^{-1/3 + \ga/4} + k^{2/3 - \ga/4}.
		\end{split}
		\end{equation}
		Here the inequality uses that $t \le k^{-2/3 - \ga}, b \le k^{\ga/4}$ and that $2k + 1 - \floor{3k/2} \le k$ for $k \ge 3$. However, by (ii) in the definition of $E_{b, m}$, again using that $b \le k^{\ga/4}$, we have that
		$$
		h_{\floor{3k/2}} \ge a_{\floor{3k/2}}(0) - k^{\ga/4 - 1/3}.
		$$
		This bound contradicts~\eqref{E:h-3k} if we can show that
		\begin{equation}
		\label{E:kkga}
		k^{-1/3 + \ga/4} + k^{2/3 - \ga/4} + k^{\ga/4 - 1/3} \le \ka(2^{2/3}-(3/2)^{2/3})k^{2/3}.
		\end{equation}
		Since $\ga \le 2$, the left hand side above is bounded by $3k^{2/3 - \ga/4}$. The assumption that
		$$
		\ga > c \log (\log k)/ \log k
		$$
		then implies~\eqref{E:kkga} as long as $c$ is taken large enough (note that the weaker lower bound of $\ga > c /\log k$ would also suffice). This completes the proof that $I$ is nonempty and $\sup I \ge 3k/2$.
		
		
		Now, for $i \in [J,  \sup I]$, let
		$
		f(i) = \inf (I \cap [i, 2k]).
		$
		We claim that $f(i) - i < m$ for all $i \in [J, \sup I]$. If not, then $f(i) - i \ge m$ for some $i \in [J, \sup I]$. The recursive definition of $h_i$ then implies that
		$$
		h_i \ge m b\sqrt{t} + \scrL_{f(i)}(0) > \scrL_i(0).
		$$
		Since $b \le k^{\ga/4}/m$ and $t \le k^{-2/3 - \ga}$, this would imply that $i$ and $f(i)$ are in the same component of $G_{2k}(t, 1, 5k^{-1/3-\ga/4})$, contradicting condition (iii) on the set $E_{b, m}$.
		Therefore for all $i \in [J,  \sup I]$, $f(i) - i < m$ and so for such $i$, we have
		$$
		h_i - \scrL_i(0) < mb\sqrt{t} \le k^{-1/3-\gamma/4}.
		$$
		In the second inequality we have again used that $b \le k^{\ga/4}/m$ and $t \le k^{-2/3 - \ga}$.
		In particular, this bounds holds for $i=J$. By the definition of $J$, this implies~\eqref{E:J-bd}.
		
		
		We can define $h_i'$ in terms of the sequence $\{\scrL_i(t)\}$ analogously to how we defined $h_i$. This gives an analogous inequality to~\eqref{E:J-bd} for $h'_J$.
		
		\begin{figure}
			\centering
			\includegraphics[width=4cm]{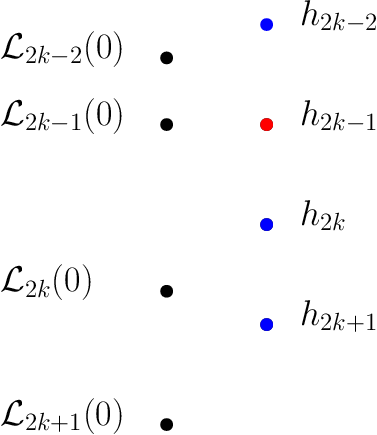}
			\caption{A visual aid for step 2. The points $h_i$ are defined recursively so that $h_i \ge \scrL_i(0)$ for all $i \ge 2k$, and so that the $h_i$ are well-spaced. In the above figure, all points have been moved up except $\scrL_{2k-1}(0)$. Hence $I \cap [2k-2, k] = \{2k-1\}$ and $f(2k-2) = f(2k-1) = 2k-1$.}%
			\label{fig:hi}
		\end{figure}
		
		\textbf{Step 3: A stochastic domination argument.} \qquad Now, by Lemma~\ref{L:monotone-gibbs}, on the event $E_{b, m}$ the distribution of $\{A_J(s) : s \in [0, t]\}$ conditional on $\scrF(k, t)$ is stochastically dominated by the top line $B_J$ of a system of Brownian bridges $(B_{J},\ldots,B_{2k})$ of variance $2$ on $[0, t]$ with endpoints $B_i(0) = h_i, B_i(t) =h'_{i}$, conditioned to avoid each other and stay above the boundary
		$$
		h_{2k + 1}(s) = a_{2k + 1}(s) + bk^{-1/3}.
		$$
		In particular, on $E_{b,m}$ we can bound the conditional probability in~\eqref{E:cond-J} below by
		\begin{equation}\label{E:eventabove}
		\p \Big(\min_{s\in [0, t]} L_{J-1}(s) -  B_{J}(s) \ge k^{-1/3 - \gamma/4} \; \Big | \; \scrF(k, t) \Big)
		\end{equation}
		Now the $B_i$ can be realized by repeated sampling of independent Brownian bridges until the avoidance conditions are satisfied. Let $(C_J, \dots, C_{2k})$ denote the first sample, and let $S$ be the event that it successfully satisfies all avoidance conditions. Then the event in~\eqref{E:eventabove} is implied by
		$$S \cap P, \qquad \text{ where } \quad P = \left\{\min_{s\in [0, t]} L_{J-1}(s) - C_{J}(s) \ge k^{-1/3 - \gamma/4}\right\}.
		$$
		This event is implied by the event $R$ where all bridges $C_i$ stay in a channel of width $b\sqrt{t}/2$ about the line from $(0, h_i)$ to $(t,h_i')$. To see this, first note that since $|h_i - h_{i+1}|, |h_i' - h_{i' + 1}| \ge b\sqrt{t}$ by construction, that $R \sset S$. Second, by~\eqref{E:J-bd}, the analogous inequality for $h_J'$, and the bounds on $b$ and $t$, we have
		$$
		h_J \le \scrL_{J-1}(0) - 3b\sqrt{t}, \qquad h_J' \le \scrL_{J-1}(t) - 3b\sqrt{t}.
		$$
		This implies that $R \sset P$ as well. The conditional probability of $R$ can be controlled a standard bound on the maximum of a Brownian bridge and a union bound. This yields that on the event $E_{b, m}$ we have
		\begin{equation}
		\label{E:ck-db}
		\p(R \;| \; \scrF(k, t)) \ge 1 - cke^{-db^2}
		\end{equation}
		for universal constants $c$ and $d$.  Hence the same inequality holds for the conditional probability in~\eqref{E:cond-J}.
		
		
		\textbf{Step 4: Bounding $\p(E_{b, m})$:} \qquad We bound $\p(E_{b, m})$ assuming that $b > \beta \log k$ for some constant $\beta$. With this lower bound on $b$, we can use Corollary~\ref{C:k-line-bd} to bound the probability that condition (i) holds and Corollary~\ref{C:point-locations} to bound probability of condition (ii). We use Proposition~\ref{P:delta-chains} for condition (iii) with $\ell = 1$ and $\de = 5k^{-\ga/4}$. These bounds yield
		$$
		\p(E_{b, m}) \ge 1 - ce^{-b/10} - [c(1 + \ga/4)m \log k]^mk^{1 - \frac{3\ga}{4}\floor{m/6}}
		$$
		for a constant $c$.
		
		
		Finally, taking $b = k^{\ga/6}, m = k^{\ga/12},$ and using that $\ga \in [c \log \log k/ \log k, 2]$ gives that the right hand side above is bounded below by
		$
		1 - e^{d \ga k^{\ga/12}}
		$
		for a constant $d$. Moreover, the lower bound on $\ga$ also implies that~\eqref{E:ck-db} (and hence~\eqref{E:cond-J}) is bounded below by $1 - e^{-dk^{\ga/4}}$. This completes the proof of Lemma~\ref{L:key-bridge}.
	\end{proof}
	
	We are now ready to prove the bridge representation theorem. The basic idea of the proof is to construct a line ensemble $\scrC^k$ on $\{1, \dots, 2k\} \X [0, t]$ that interpolates between $\scrB^k := \scrB^k(t, \ell, k^{-1/3 -\ga/4})$ and $\scrL$ as follows.
	\begin{itemize}[nosep]
		\item $\scrC^k$ and $\scrB^k$ will have the same law on the set $\{1, \dots, k\} \X [0, t]$.
		\item The total variation distance between the $\scrC^k$ and $\scrL|_{\{1, \dots, 2k\} \X [0, t]}$ will be bounded above by $\ell e^{-d \ga k^{\ga/12}}$.
	\end{itemize}
	The line ensemble $\scrC^k$ will be constructed by replacing the lines $\scrB^k_{k+1}, \dots, \scrB^k_{2k}$ with Brownian bridges $A_{i, j}$ constructed via Lemma~\ref{L:key-bridge}. We will then be able to estimate the total variation distance between $\scrC^k$ and $\scrL|_{\{1, \dots, 2k\} \X [0, t]}$ by estimating the probability that $\scrC^k$ intersects itself or the lower boundary $\scrL_{2k + 1}$.
	
	
	Throughout the proof, $b$ and $d$ will be positive constants that may change from line to line. We use the notation of Lemma~\ref{L:key-bridge}.
	\begin{proof}[Proof of Theorem~\ref{T:bridge-rep}]
		Set $\ga \in [c \log \log k/ \log k, 2]$ for a sufficiently large constant $c$. As with Lemma~\ref{L:key-bridge}, exactly how large we need to take $c$ will be made clear in the proof.
		Let $\scrH$ be the $\sig$-algebra generated by the set of random variables
		\begin{equation}
		\label{E:S-set}
		S = \{\scrL_i(s_j) : (i, j) \in \{1, \dots, 2k\} \X \{0, \dots, \ell\} \} \cup \{\scrL_{2k + 1}|_{[0, t]}\}.
		\end{equation}
		
		By Lemma~\ref{L:key-bridge} and a union bound, there exist random variables $J_1, \dots, J_{\ell} \in (k, 2k]$ such that the following holds. For $j \in \{1, \dots, \ell\}$, define a sequence of Brownian bridges $A_j = (A_{J_j, j}, \dots, A_{2k, j})$ on the interval $[s_{j-1}, s_j]$ as in the statement of Lemma~\ref{L:key-bridge} such that $A_{i, j}(s_{j-1}) = \scrL_i(s_{j-1})$ and $A_{i, j}(s_{j}) = \scrL_i(s_{j})$, and such that the bridges in $A_j$ are conditioned to avoid each other and the boundary $\scrL_{2k+1}$.  	
		Also let $L_j$ be the line with $L_j(s_{j-1}) = \scrL_{J_j - 1}(s_{j-1})$ and $L_j(s_{j}) = \scrL_{J_j - 1}(s_{j})$.
		Then with probability at least $1 - \ell e^{- d k^{\ga/12}}$, we have
		\begin{equation}
		\label{E:big-p}
		\begin{split}
		\p \lf(R | \; \scrH \rg) &\le \ell e^{- d \ga k^{\ga/4}} \qquad 	\text{ where } \qquad \\
		R &= \lf\{\exists j \in \{1, \dots, \ell\} : \min_{q \in [s_{j-1}, s_{j}]}L_j(q) - A_{J_j, j}(q) \le k^{-1/3 - \gamma/4} \rg\}.
		\end{split}
		\end{equation}	
		We now define the line ensemble $\scrC^k$. For each $j \in \{1, \dots, \ell\}$ and $i \in \{J_j, \dots, 2k\}$, set $C_{i, j} = A_{i, j}$. For $j \in \{1, \dots, \ell\}$ and $i \in \{1, \dots, J_j- 1\}$, set $C_{i, j} = B_{i, j}$, where the bridges $B_{i, j}$ are as in Definition~\ref{D:bridge-rep}. Concatenating the bridges $C_{i, j}$ together for $j \in \{1, \dots, \ell\}$ gives the $i$th line $\scrC^k_i$ in the line ensemble $\scrC^k$.

		
		Now, the conditional distributions of both $\scrL|_{\{1, \dots, 2k\} \X [0, t]}$ and $\scrC^k$ given $\scrH$ are the laws of $2k \ell$ independent Brownian bridges
		$$
		\{Z_{i, j} : (i, j) \in \{1, \dots, 2k\} \X \{1, \dots, \ell\} \}
		$$
		with endpoints $Z_{i, j}(s_{j-1}) = \scrL_i(s_{j-1})$ and $Z_{i, j}(s_{j}) = \scrL_i(s_{j})$, conditioned on two different sets of nonintersection events.
		
		
		Since the nonintersection conditions for $\scrL$ are stronger than those of $\scrC^k$, the total variation distance of their laws is simply the probability that $\scrC^k$ does not satisfy the nonintersection conditions for $\scrL$. This is
		\begin{equation}
		\label{E:P-Ck}
		\p(D), \;\; \text{ where } D = \{\text{$\scrC^k$ intersects itself or the lower boundary $\scrL_{2k + 1}$}\}.
		\end{equation}
		Because the bridges $A_{i, j}$ were conditioned both to be nonintersecting and not to intersect $\scrL_{2k+1}$, the only way that the event in~\eqref{E:P-Ck} can hold is if either
		\begin{itemize}[nosep]
			\item $B_{J_{j}-1, j}$ intersects $A_{J_j, j}$ for some $j \in \{1, \dots \ell\}$, or
			\item a pair of bridges $B_{i, j}$ and $B_{i+1, j}$ intersect for some $j \in \{1, \dots, \ell\}, i \in \{1, \dots, J_{j}-2\}$ where $(i, j)$ and $(i+1, j)$ are not connected by an edge in $G_{2k}(t, \ell, k^{-1/3 - \ga/4})$.
		\end{itemize}
		Therefore letting $E_{i,j}$ be the event where the bridge $B_{i, j}$ leaves a channel of width $k^{-1/3 - \ga/4}/2$ around the line between its endpoints, we have
		\begin{equation}
		\label{E:D-Rc}
		D \sset R^c \cup \bigcup_{j \in [1, \ell], i \in [1, J_{j}-1]} E_{i, j}.
		\end{equation}
		We now bound $\p(E_{i, j} \; | \; \scrH)$. Let $Q_{i, j}$ be the slope of the line segment connecting the points $(s_{j-1}, \scrL_i(s_{j-1}))$ and $(s_{j}, \scrL_i(s_j))$ and let $M$ be the size of the largest component of $G_{2k}(t, \ell, k^{-1/3 - \ga/4})$. Each of the bridges $B_{i, j}$ has slope $Q_{i, j}$ and is conditioned to avoid at most $M-1$ other bridges. Therefore for all $j \in [1, \ell]$ and $i \in [1, J_{j-1}]$, we can apply Proposition~\ref{P:bridge-bd} to the bridge $B_{i, j}$ on the interval $[s_{j-1}, s_j]$ to get that conditionally on $\scrH$, we have that
		\begin{equation}
		\label{E:Bijsr}
		|B_{i, j}(s + r) - B_{i, j}(s) - Q_{i, j} r| \le P_{i, j} \sqrt{r} \log^{1/2} \lf( \frac{2 t}{\ell r} \rg)
		\end{equation}
		for all $s, s + r \in [s_{j-1}, s_j]$
		for a random constant $P_{i, j}$ satisfying
		$$
		\p\lf( P_{i, j} \ge m \; | \; \scrH \rg) \le e^{c M - d m^2}.
		$$
		The right hand side of \eqref{E:Bijsr} is always bounded above by $P_{i, j} \sqrt{t/\ell}$, so $E_{i, j}$ implies the event $P_{i, j} \ge \frac{1}2 k^{-1/3 - \ga/4} \sqrt{\ell/t}$. Therefore we get that
		\begin{equation}
		\label{E:Eij}
		\p(E_{i, j} \;|\; \scrH) \le \exp\lf(b M - d k^{-2/3 -\ga/2}\ell/t \rg) \le \exp\lf(b M - d k^{\ga/2} \rg).
		\end{equation}
		Taking a union bound over $E_{i, j}$ in~\eqref{E:Eij}, on the $\scrH$-measurable event $M \le k^{\ga/12}$ we get that
		\begin{equation}
		\label{E:Eij-H-bd}
		\p\lf(\bigcup_{j \in [1, \ell], i \in [1, J_{j}-1]} E_{i, j} \;\; \bigg| \;\; \scrH \rg)  \le k\ell \exp\lf(b k^{\ga/12} - d k^{\ga/2} \rg) \le b \ell k \exp \lf(-d k^{\ga/2}\rg).
		\end{equation}
		In the second inequality, we have used that $\ga \ge c \log \log k/\log k$ to ensure that $k^{\ga/2} \gg k^{\ga/12}$ as $k \to \infty$.
		Now we bound $\p(M > k^{\ga/12})$ to ensure that \eqref{E:Eij-H-bd} holds with high probability.
		By Proposition~\ref{P:delta-chains}, for any $m > 0$ we have that
		$$
		\p(M > m) \le \ell [b(1 + \ga^3) m \log k]^m k^{1 -3(\gamma/4) \floor{m/6}}.
		$$
		In particular, setting $m = k^{\ga/12}$ we get that
		$$
		\p(M > k^{\ga/12}) \le \ell [b(1 + \ga^3) k^{\ga/12} \log k]^{k^{\ga/12}} k^{1 -3(\gamma/4) \floor{k^{\ga/12}/6}} \le \ell e^{- d \ga k^{\ga/12}}.
		$$
		For the second inequality we have used that $\ga \in [c \log \log k/\log k, 2]$, which ensures that $k^{\ga/12} \gg \log k \gg 1 + \ga^3$ as $k \to \infty$.
		
		
		Hence by combining~\eqref{E:big-p},~\eqref{E:D-Rc}, and~\eqref{E:Eij-H-bd}, we have
		$$
		\p(D \; | \; \scrH) \le \ell e^{-d \ga k^{\ga/12}} + b \ell k e^{-d k^{\ga/2}}
		$$
		with probability at least $1 - \ell e^{-d \ga k^{\ga/12}}$. Averaging over $\scrH$ shows that $\p(D) \le \ell e^{-d \ga k^{\ga/12}}$, completing the proof of Theorem~\ref{T:bridge-rep}.
	\end{proof}
	
	When using the bridge representation, it is important that we have a good understanding of the graph $G_{2k}(t, \ell, \de)$. In this vein, we conclude this section with an estimate on the density of nonisolated vertices in $G_{2k}(t, \ell, \de)$.
	
	
	While this next proposition is not used in the remainder of the paper, we include it anyways for an important application in~\cite{DOV} and to demonstrate how edge locations in the graph $G_{2k}(t, \ell, \de)$ can be controlled. Note that we have not tried to optimize the parameter bounds in Proposition~\ref{P:edge-spread}. In particular, the restriction on $m$ is rather artificial, and is chosen for ease of proof.
	
	\begin{prop}
		\label{P:edge-spread}
		Fix $\ga \in (0, 2]$, and let $k \in \nat, t \in (0, \infty), \ell \in \N$. Let the graph $G := G_{2k}(t, \ell, k^{-1/3 - \ga/4})$ be as in Definition~\ref{D:bridge-graph}. For each $j \in \{1, \dots, \ell\}$, let
		$$
		V_j = \{x \in \{1, \dots, 2k\} :  \deg_G(x, j) \ge 1\}.
		$$
		In other words, $V_j$ is the set of vertices in $G$ with second coordinate $j$ that are connected to at least one other vertex. Then for any $\al \in (0, 1]$ there exist constants $c_{\al}, d_{\al}$ such that for all  $k \in \N, i \in \{\floor{k^{\al}} + 1, \dots, 2k\},$ and $m \le k^{\al/2}$, we have that
		\begin{equation}
		\label{E:Vj}
		\p(|V_j \cap \{i - \floor{k^{\al}}, \dots, i \}| > mk^{\al - 3\ga/4}) \le c_{\al} e^{-d_{\al} m}.
		\end{equation}
		
	\end{prop}
	
	In the proof, $c', d > 0$ are constants that may change from line to line.
	
	\begin{proof}
		Fix $k_\al$ large enough so that for all $k \ge k_\al$ and $i \le 2k$, we have $k^{\al/2} \ge \beta \log (i + 1)$, where $\beta$ is as in Corollary~\ref{C:point-locations}. We first prove the proposition for $k < k_\al$. In this case, $m \le k_\al^{\al/2} \le k_\al^{1/2}$. Therefore we can always guarantee that the right hand side of~\eqref{E:Vj} is greater than $1$ by possibly modifying $c_\al, d_\al$.
		
		
		Now suppose that $k \ge k_\al$. Fix $i \in \{\floor{k^\al} + 1, \dots, 2k\}$, and let
		$$
		a_i = \ka i^{2/3} + k^{\al/2} i^{-1/3}, \qquad \ell_i = c a_i^{-2} k^{1 + \al}, \qquad I_i = [-a_i, -a_i + \ell_i],
		$$
		for a large constant $c$ to be determined in the proof. For any $s \in \R$ and $i \in \{\floor{k^\al} + 1, \dots, 2k\}$ we claim that
		\begin{equation}
		\label{E:Ri}
		\p(\{\scrL_{i - \floor{k^{\al}}}(s) + s^2, \dots, \scrL_i(s) + s^2\} \nsubseteq I_i) \le 2 e^{-k^{\al/2}/5}.
		\end{equation}
		By the ordering of the points $\scrL_j(s)$ and a union bound, the left side of \eqref{E:Ri} is bounded above by
		\begin{equation}
		\label{E:Ri'}
		\p(\{\scrL_{i - \floor{k^{\al}}}(s) + s^2 > - a_i + \ell_i) + \p(\scrL_i(s) + s^2 < a_i).
		\end{equation}
		To bound the two probabilities in \eqref{E:Ri'}, we will use Corollary~\ref{C:point-locations}. In order to fruitfully apply this corollary to bound the first probability above, we will need a lower bound on $\ell_i$. We will show that
		\begin{equation}
		\label{E:ell-i}
		\ell_i \ge a_i -\ka (i- \floor{k^\al})^{2/3} + k^{\al/2} (i- \floor{k^\al})^{-1/3}
		\end{equation}
		for all $i \ge \floor{k^\al} + 1$. First, since $i \ge k^\al \ge k^{\al/2}$, we have that $a_i \le (\ka + 1)i^{2/3}$, and so
		\begin{equation}
		\label{E:ellii}
		\ell_i = \frac{c k^{1 + \al}}{a_i^2} \ge \frac{c}{(\ka + 1)^2} k^{1 + \al} i^{-4/3} \ge \frac{c}{2(\ka + 1)^2} k^{\al} i^{-1/3}.
		\end{equation}
		The second inequality uses that $2k \ge i$. Now,
		\begin{align}
		\nonumber
		a_i -\ka (i- \floor{k^\al})^{2/3} + k^{\al/2} (i- \floor{k^\al})^{-1/3} &\le 2 k^{\al/2} (i- \floor{k^\al})^{-1/3} + \ka [i^{2/3} - (i- \floor{k^\al})^{2/3}] \\
		\nonumber
		&\le 2 k^{\al/2} (i- \floor{k^\al})^{-1/3} + \ka i^{-1/3} k^\al \\
		\label{E:ki13}
		&= k^{\al} i^{-1/3} \lf(\ka + \frac{2i^{1/3}}{k^{\al/2}(i- \floor{k^\al})^{1/3}} \rg).
		\end{align}
		Now, the term in the brackets in \eqref{E:ki13} is bounded above by a constant for all $k \in \N, i \ge \floor{k^{\al}} + 1$. Therefore the right side of \eqref{E:ki13} is bounded above by the right side of \eqref{E:ellii} for all $k \in \N, i \ge \floor{k^{\al}} + 1$ as long as $c$ is large enough, proving \eqref{E:ell-i}.
		
		
		Therefore \eqref{E:Ri'} is bounded above by
		\begin{align*}
		&\p\lf(\scrL_i(s) + s^2 < -k^{\al/2}i^{-1/3}  - \ka i^{2/3}\rg) \\
		+  &\p\lf(\scrL_{i - \floor{k^{\al}}}(s) + s^2 > -\ka (i - \floor{k^{\al}})^{2/3} + k^{\al/2}(i - \floor{k^{\al}})^{-1/3}\rg).
		\end{align*}
		Since $k^{\al/2} \ge \beta \log (i + 1)$, we can bound these probabilities above with Corollary~\ref{C:point-locations}, yielding \eqref{E:Ri}.
		
		
		Now using that $a_i^2 \ell_i = k^{1 + \al}$, by Proposition~\ref{P:mean-conc}, the number $L$ of $k^{-1/3-\ga/4}$-jammed points for the point process $\{\scrL_i(s) + s^2\}_{i \in \N}$ in the interval $I_i$ satisfies
		$$
		\p(L > mk^{\al - 3\ga/4}) \le c'e^{-dm}.
		$$
		By combining this with the bound in~\eqref{E:Ri}, we can bound the size of $V_j \cap \{i - \floor{k^{\al}}, \dots, i \})$ by noting that the number of noninsolated vertices can be bounded by the sum of the number of $\de$-jammed points in the sequences $\{\scrL_i(s_{j-1}) + s_{j-1}^2 \}_{i \in \N}$ and $\{\scrL_i(s_{j}) + s_j^2\}_{i \in \N}$.
	\end{proof}
	
	\section{A stronger modulus of continuity for parabolic Airy lines}\label{S:pr-bridge}
	
	In this section we give an application of the bridge representation to get a modulus of continuity bound on parabolic Airy lines that has a better $k$-dependence than Theorem~\ref{T:airy-mod}. Theorem~\ref{T:mod-cont-i} is an immediate corollary of our next theorem.

	\begin{theorem}
		\label{T:mod-cont}
		There exists a constant $d > 0$ such that for any $t > 0$, we have
		\begin{equation}
		\label{E:k-summa}
		\sum_{k \in \N} \p\lf( \sup_{s, s + r \in [0, t]} \frac{|\scrL_k(s) - \scrL_k(s + r)|}{\sqrt{r} \log^{1/2}(1 + r^{-1}) \log^d k} > 1 \rg) < \infty.
		\end{equation}
		Moreover, for any $t > 0$, we have that
		$$
		\sup_{k \in \N} \sup_{s, s + r \in [0, t]} \frac{|\scrL_k(s) - \scrL_k(s + r)|}{\sqrt{r} \log^{1/2}(1 + r^{-1}) \log^d k} < \infty \qquad \text{ almost surely }.
		$$
	\end{theorem}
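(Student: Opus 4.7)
The plan is to use the bridge representation of Theorem \ref{T:bridge-rep} to reduce the problem to bounding the modulus of continuity of the concatenation of Brownian bridges $\scrB^k_k$, then control within-interval fluctuations by Proposition \ref{P:bridge-bd} and deal separately with grid-to-grid fluctuations. Fix $\gamma \in (0, 2]$ and apply Theorem \ref{T:bridge-rep} with $\ell_k = \lceil t k^{2/3 + \gamma} \rceil$ and $\delta_k = k^{-1/3 - \gamma/4}$, coupling $\scrA$ to $\scrB^k$. The total variation distance is $\ell_k e^{-d \gamma k^{\gamma/12}}$, which is summable in $k$, so by a Borel--Cantelli argument it suffices to prove the estimate with $\scrB^k_k$ in place of $\scrA_k$. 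Also condition on the event, of summable complement in $k$ by Proposition \ref{P:delta-chains} with $m$ of order $\log k$, that every component of $G_{2k}(t, \ell_k, \delta_k)$ has size at most $b \log k$.

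Within a grid interval $[s_{j-1}, s_j]$, conditionally on the grid values and the cluster structure, the piece $B_{k, j}$ of $\scrB^k_k$ is one of at most $M_k \le b \log k$ nonintersecting Brownian bridges with endpoints $\scrA_k(s_{j-1})$ and $\scrA_k(s_j)$. Proposition \ref{P:bridge-bd} then gives
\[
|B_{k, j}(s + u) - B_{k, j}(s) - b_{k, j} u| \le C_{k, j} \sqrt{M_k\, u \log(1 + u^{-1})} \qquad \forall\, s, s + u \in [s_{j-1}, s_j],
\]
with $\p(C_{k, j} > m) \le c e^{-d m^2}$, where $b_{k, j}$ is the slope of the bridge. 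A union bound over $k \in \N$ and $j \in \{1, \dots, \ell_k\}$ (which is polynomial in $k$) yields a global random constant $C_0$ satisfying $C_{k, j} \le C_0 \sqrt{\log k}$ for all $k, j$ on a summable-complement event; this controls within-interval fluctuations of $\scrB^k_k$ at the desired order $\sqrt{u \log(1 + u^{-1})}$ times a polylogarithmic factor.

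For increments crossing grid intervals, decompose $\scrB^k_k(s + r) - \scrB^k_k(s)$ into two end-pieces inside single grid intervals (controlled as above) plus a grid-to-grid piece $\scrA_k(s_{j'}) - \scrA_k(s_j)$ with $|s_{j'} - s_j| \le r$. For macroscopic $r \gtrsim k^{-2/3}$, Corollary \ref{C:k-line-bd} gives $\sup_{s \in [0, t]} |\scrR_k(s) + \kappa k^{2/3}| = O(k^{-1/3} \log k)$ with summable failure, which is absorbed into $\sqrt{r \log(1 + r^{-1})} \log^d k$ for $d$ large enough. For the remaining mesoscopic range, we use a dyadic chaining argument that iterates the bridge representation at progressively coarser scales and applies Proposition \ref{P:bridge-bd} at each scale, transferring within-interval bounds into grid-to-grid bounds with only a polylogarithmic blow-up. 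Feeding the resulting increment tail bound into Lemma \ref{L:levy-est} produces the desired modulus of continuity for $\scrB^k_k$, and the summability in $k$ of all failure events gives \eqref{E:k-summa}.

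The main obstacle is the mesoscopic grid-to-grid step: a direct application of Theorem \ref{T:airy-mod} or Lemma \ref{L:airy-tails} carries a $\sqrt{k}$ factor that cannot be absorbed into $\log^d k$. The bridge representation is essential here because it effectively replaces this $\sqrt{k}$ by $\sqrt{M_k} = O(\sqrt{\log k})$, and the dyadic chaining then only costs additional polylogarithmic factors, keeping the final bound inside the target.
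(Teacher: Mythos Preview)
Your overall strategy---couple to the bridge representation, control the bridge part via Proposition \ref{P:bridge-bd}, and handle the grid-to-grid part separately---matches the paper's. However, there is a genuine gap in your mesoscopic step, and it stems from a suboptimal parameter choice that the paper avoids.

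You take $\gamma \in (0,2]$ fixed. This forces grid spacing $\sim k^{-2/3-\gamma}$, so the grid-to-grid increment $\scrA_k(s_{j'}) - \scrA_k(s_j)$ must be controlled for all $r$ down to $k^{-2/3-\gamma}$. Point-location bounds (Corollary \ref{C:point-locations} or \ref{C:k-line-bd}) only give $|\scrA_k(s_{j'}) - \scrA_k(s_j)| \lesssim k^{-1/3}\log k$, which is \emph{not} bounded by $\sqrt{r}\,\mathrm{polylog}(k)$ in the window $k^{-2/3-\gamma} \lesssim r \lesssim k^{-2/3}$. Your proposed fix---``iterate the bridge representation at progressively coarser scales''---is not fleshed out: the bridge representations at different $\gamma$ are distinct random objects, not nested, and the grid-to-grid values at scale $\gamma$ are boundary data, untouched by the bridges at that scale. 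One could in principle invoke a whole family of couplings indexed by dyadic $\gamma$'s down to the minimum allowed $\gamma_{\min} = c\log\log k/\log k$, but this is substantially more work than you indicate, and you end up using $\gamma_{\min}$ anyway.

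The paper sidesteps all of this by taking $\gamma = c\log\log k/\log k$ from the start, so that $k^{\gamma} = \log^c k$ and the grid spacing is $k^{-2/3}$ up to a polylog. With this choice there is no mesoscopic gap: the proof decomposes $B_k = L_k + X_k$ into the piecewise-linear interpolant $L_k$ through the grid values and the bridge remainder $X_k$. For $L_k$, the point-location bound $|\scrA_k(s_j) + \kappa k^{2/3} - s_j^2| \le k^{-1/3}\log k$ (Corollary \ref{C:point-locations}) directly yields $|L_k(s+r)-L_k(s)| \lesssim \log(k)\,k^{-1/3}\max(r\ell_k,1) \lesssim \sqrt{r}\,\log^{c+2}k$ for \emph{all} $r$. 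For $X_k$, Proposition \ref{P:delta-chains} gives $M_k \le \log^{c/16}k$ with summable failure, and Proposition \ref{P:bridge-bd} finishes. No chaining across bridge representations is needed.
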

	The second statement follows from the first by the Borel-Cantelli lemma and Theorem~\ref{T:airy-mod}. Note that if we remove the supremum over $k$, then the second statement is implied by Theorem~\ref{T:airy-mod}. However, if we simply apply that theorem and take a union bound, we need to put a factor of $\sqrt{k}$ in the denominator, rather than a power of $\log k$.
	
	
	On small time intervals (i.e. if we only take the supremum over $s, s + r \in [0, t]$ with $r < k^{-2/3 - \ga}$ for some fixed $\gamma > 0$), a variant of our proof will give the power $\log^{1/2} k$ instead of $\log^d(k)$, which is the same as what one would get for sequences of independent Brownian motions.
	
	
	Throughout the proof, $b$ and $d$ are large positive constants and $d_0$ is a small positive constant. These may change from line to line. The constant $b$ may depend on $t$, but $d$ and $d_0$ will not.
	
	\begin{proof}
		We will show that there exists a $d > 0$ and a $t$-dependent constant $b$ such that
		\begin{equation}
		\label{E:k-summa'}
		\sum_{k \in \N} \p\lf( \sup_{s, s + r \in [0, t]} \frac{|\scrL_k(s) - \scrL_k(s + r)|}{\sqrt{r} \log^{1/2}(1 + r^{-1}) \log^d k} > b \rg) < \infty
		\end{equation}
		This immediately implies \eqref{E:k-summa} with $d$ replaced by $d + 1$.
		Let $\scrB^k$ denote the bridge representation
		$$
		\scrB^k\lf(t, \ell_k, k^{-1/3 - \ga/4}\rg), \qquad \text{ where } \ell_k = \ceil{tk^{2/3 + \ga}}.
		$$
		with $\ga = c \log \log(k)/ \log (k)$. Here $c$ is a constant that can be chosen to be large enough so that each of the subsequent steps in the proof goes through.
		By Theorem~\ref{T:bridge-rep}, for all $k$ such that $c \log \log(k)/ \log (k) < 2$, we can couple the bridge representations $\scrB^k$ to $\scrL$ so that
		$$
		\p(\scrB^k|_{\{1, \dots, k\} \X [0, t]} \ne \scrL|_{\{1, \dots, k\} \X [0, t]}) \le \ceil{tk^{2/3 + \ga}}e^{- d_0\ga k^{\ga/12}}.
		$$
		By noting that $k^\ga = \log^c k$, as long as $c$ was chosen large enough, the right hand side above is bounded by $k^{-2}$ for all large enough $k$ and so
		$$
		\sum_{k \in \N} \p(\scrB^k|_{\{1, \dots, k\} \X [0, t]}  \ne \scrL|_{\{1, \dots, k\} \X [0, t]} ) < \infty.
		$$
		In particular, this means that it suffices to prove~\eqref{E:k-summa'} with the $k$th line of $\scrB^k$, denoted by $B_k$, in place of $\scrL_k$. Specifically, it is enough to show that for large enough $b$, we have
		\begin{equation}
		\label{E:B-bd}
		\p \lf(\sup_{s, s + r \in [0, t]} \frac{|B_k(s) - B_k(s + r)|}{\sqrt{r} \log^{1/2}(1 + r^{-1}) \log^d k} > b \rg) = O(k^{-2}).
		\end{equation}
		Define
		$$
		s_i = it/\ell_k \quad \mathfor \quad i \in \lf\{0, \dots, \ell_k \rg\}.
		$$
		Let $L_k$ denote the function which is equal to $B_k$ at the grid points $s_i$ and affine on each of the intervals $[s_{i-1}, s_i]$ and set $X_k = B_k - L_k$. By the triangle inequality and a union bound, it suffices to prove the bound in~\eqref{E:B-bd} for $L_k$ and $X_k$ separately. Letting $a_k(s) = \ka k^{2/3} + s^2$, Corollary~\ref{C:point-locations} and a union bound implies that there exists $c' > 0$ such that for all large enough $b$, we have
		\begin{equation}
		\label{E:Plf}
		\p\lf( |B_k(s_i) + a_k(s_i)| > b \log (k) k^{-1/3} \text{ for some } i \in \lf\{0, \dots, \ell_k \rg\} \rg) \le t e^{(c'-b)\log k/5}
		\end{equation}
		for all $k \ge 2$. For every $t$, there exists a $b_0$ such that the right hand side of \eqref{E:Plf} is bounded above by $k^{-2}$ for all $b \ge b_0$ and $k \ge 2$. 
		
		Now, on the event in \eqref{E:Plf} with $b = b_0$, the slope of each of the linear pieces of $L$ is bounded above by
		\begin{equation}
		\label{E:2blogk}
		2b_0 \log(k) k^{-1/3} \ell_k/t + 2t \le 2b_1 k^{1/3 + \ga}  \log k,
		\end{equation}
		where $b_1 \ge b_0$ is another $t$-dependent constant. Here the additive factor of $2t$ comes from the parabolic shape.
		Therefore on the event in \eqref{E:Plf}, for $s, s + r \in [0, t]$ with $r \le k^{-2/3 - 2\ga}$ we have
		\begin{equation}
		\label{E:Lksoft}
		|L_k(s) - L_k(s + r)| \le 2b_1 \sqrt{r} \log k.
		\end{equation}
		Now, again on the event in \eqref{E:Plf} with $b = b_0$, we have that
		\begin{equation}
		\label{E:Lkparabolic}
		|L_k(s) + a_k(s)| \le b_0 k^{-1/3} \log k + \frac{t^2}{4 \ell_k^2} \le b_2 k^{-1/3} \log k. 
		\end{equation}
		for all $s \in [0, t]$, where $b_2 \ge b_0$ is a $t$-dependent constant. Therefore for all $r$, we have
		$$
		|L_k(s) - L_k(s+r)| \le 2tr + 2 b_2 k^{-1/3} \log k.
		$$
		where the factor of $2tr$ again comes from the parabolic shape. Since $r \le t$, we have $2 t r \le 2t^{3/2} r^{1/2}$. Therefore for $r \ge k^{-2/3 - 2\ga} = k^{-2/3} \log^{-2c}(k)$ and $k \ge 2$, we have
		\begin{equation}
		\label{E:Lkhard}
		|L_k(s) - L_k(s+r)| \le (2 t^{3/2}  + 2 b_2 \log^{c + 1}(k)) \sqrt{r} \le b_3 \log^{c + 1}(k) \sqrt{r}
		\end{equation}
		where $b_3$ is another $t$-dependent constant. Combining \eqref{E:Lkhard}, \eqref{E:Lksoft}, and the $O(k^{-2})$ tail bound on \eqref{E:Plf} for $b \ge b_0$ implies the bound in~\eqref{E:B-bd} for $L_k$.
		
		
		To get a modulus of continuity bound on $X_k$, we use the modulus of continuity bounds on nonintersecting Brownian bridges established in Proposition~\ref{P:bridge-bd}. First we need to bound the size of components in the underlying graph $G^k = G_{2k}(t, \ell_k, k^{-1/3 - \ga/4})$ that gives rise to the bridge representation $\scrB^k$. Letting $M^k$ be the size of the largest component in $G^k$, applying Proposition~\ref{P:delta-chains} with $\ell = \ceil{t k^{2/3 + \ga}}$ and $\de = k^{-\ga/4}$ gives that for every $m > 0$ and $k \in \N$, we have
		$$
		\p(M_k > m) \le k^2 [b m \log(k)]^m [\log(k)]^{-\frac{3c}4\floor{m/6}}.
		$$
		To get this bound we have again used the observation that $k^\ga = \log^c k$, as well as the crude bound that $\ceil{t k^{2/3 + \ga}} \le k$ for large enough $k$. In particular, setting $m = \log^{c/16} k$, as long as $c$ is large enough we get that
		\begin{equation}
		\label{E:Mk}
		\p(M_k > \log^{c/16} k) \le k^{-2}
		\end{equation}
		for all large enough $k$. Let $F_k$ denote the event where $M_k \le \log^{c/16} k$. Proposition~\ref{P:bridge-bd} applied to the bridge $B_k$ on the interval $[s_i, s_{i+1}]$ implies that for every $m > 0$ and every $i \in \{0, \dots, \ell_k \}$, we have
		$$
		\p \lf( \sup_{s, s + r \in [s_i, s_{i+1}]} \frac{|X_k(s) - X_k(s + r)|}{\sqrt{r} \log^{1/2}(2\ell_k /r) } > m \; \bigg| \;\; F_k \rg) \le e^{d \log^{c/16}(k) - d_0 m^2}.
		$$
		Taking $m = b \log^{c/32} k$ in the above expression for a large $b$ and simplifying, we get that
		$$
		\p \lf( \sup_{s, s + r \in [s_i, s_{i+1}]} \frac{|X_k(s) - X_k(s + r)|}{\sqrt{r} \log^{1/2}(1 + 1/r) \log^{1 + c/32}(k) } > b \; \bigg| \;\; F_k \rg) \le k^{-d_0 b}.
		$$
		Taking a union bound over the intervals $[s_i, s_{i+1}]$ and using that $X_k = 0$ at the grid points $s_i$ then yields
		\begin{equation*}
		\p \lf( \sup_{s, s + r \in [0, 1]} \frac{|X_k(s) - X_k(s + r)|}{ \sqrt{r} \log^{1/2}(1 + 1/r) \log^{1 + c/32}(k) } > b \; \bigg| \;\; F_k \rg) \le \ell_k k^{- d_0 b}.
		\end{equation*}
		Combining this with~\eqref{E:Mk} gives the bound in~\eqref{E:B-bd} for $X_k$, completing the proof of \eqref{E:k-summa'}.
	\end{proof}
	
	\noindent {\bf Acknowledgments.}
	B.V. thanks Janosch Ortmann for several useful discussions.
	
	\bibliographystyle{acm}
	\bibliography{ALEcitations3}

\end{document}